\tikzset{
  LabelStyle/.style = { rectangle, rounded corners, draw,
                        minimum width = 1em, 
                        font =  },
  VertexStyle/.append style = { inner sep=3pt,
                                font = \large},
  EdgeStyle/.append style = {->, bend left} }
\tikzset{
    ultra thick/.style={line width=5.0pt}
}
\theoremstyle{plain}
\newtheorem{theorem}{Theorem}[section]
\newtheorem{prop}[theorem]{Proposition}
\newtheorem{lemma}[theorem]{Lemma}
\newtheorem{coro}[theorem]{Corollary}
\theoremstyle{definition}
\newtheorem{definition}[theorem]{Definition}
\newtheorem{remark}[theorem]{Remark}
\newtheorem{question}[theorem]{Question}
\newtheorem{example}[theorem]{Example}
\newcommand{\Z}{{\mathbb Z}}
\newcommand{\N}{{\mathbb N}}
\newcommand{\mc}{\mathcal}
\newcommand{\Per}{\operatorname{Per}}
\newcommand{\sub}{\theta}
\newcommand{\rsub}{\vartheta}
\newcommand{\Aut}{\operatorname{Aut}}
\newcommand{\Sym}{\operatorname{Sym}}
\begin{document}

\title[Automorphism groups of RS-subshifts]{
Automorphism groups of random substitution subshifts
}

\author[
R.~Fokkink, D.~Rust, V.~Salo
]{
Robbert Fokkink, Dan Rust, Ville Salo
}
\date{\today}

\address{Institute of Applied Mathematics, Delft University of Technology,\newline
\hspace*{\parindent}Mourikbroekmanweg 6, 2628 XE, Delft, The Netherlands
}
\email{r.j.fokkink@tudelft.nl}

\address{School of Mathematics and Statistics, The Open University, \newline
\hspace*{\parindent}Walton Hall, Milton Keynes, MK7 6AA, UK}
\email{dan.rust@open.ac.uk}

\address{Department of Mathematics and Statistics, University of Turku,\newline \hspace*{\parindent}FI-20014 Turku, Finland}
\email{vosalo@utu.fi}

\keywords{automorphisms, random substitutions, topological conjugacy, amenability}

\subjclass[2010]{37B10, 37A50, 37B40, 52C23}

{\centering\emph{Dedicated to the memory of Uwe Grimm}\par}

\begin{abstract}
We prove that for a suitably nice class of random substitutions, their corresponding subshifts have automorphism groups that contain an infinite simple subgroup and a copy of the automorphism group of a full shift.
Hence, they are countable, non-amenable and non-residually finite.
To show this, we introduce the concept of shuffles and generalised shuffles for random substitutions, as well as a local version of recognisability for random substitutions that will be of independent interest.
Without recognisability, we need a more refined notion of recognisable words in order to understand their automorphisms.
We show that the existence of a single recognisable word is often enough to embed the automorphism group of a full shift in the automorphism group of the random substitution subshift.
\end{abstract}

\maketitle

\section{Introduction}\label{SEC:intro}
The study of automorphism groups of subshifts has a rich history \cite{BLR:auto-sft,H:curtis-hedlund-lyndon,KR:automorphisms}.
The general philosophy is that algebraic properties of the automorphism group can reveal dynamical properties of the subshift.
In particular, the larger the automorphism group, the more `complex' should be the subshift.
This is by no means a hard-and-fast rule, but broadly rings true; subshifts with positive entropy typically have large automorphism groups (for instance, often containing every finite group), while the automorphism groups of subshifts with low subword complexity are virtually $\Z$ \cite{CK:auto-linear,CK:auto-sub-quad,DDMP:auto-low-complexity}.
As well as proving new results, Pavlov and Schmieding give a good survey of the state of the art in the low complexity setting \cite{PS:autos-low-complexity}.

Random substitutions are a relatively new object of study in symbolic dynamics, generalising the notion of substitutions (here called \emph{deterministic substitutions}).
They are of particular interest in the study of mathematical quasicrystals, first studied in this context by Godr\`{e}che and Luck \cite{GL:random} in the 80s, as they provide a mechanism for generating quasicrystals with both positive entropy and long-range order, in the sense that they can have a non-trivial pure point component in their diffraction spectrum.
Baake and Grimm's series \emph{Aperiodic Order} does well to explain this significance \cite[Sec.\ 11.2.3]{BG:book}.
For a gentle introduction to random substitutions in the context of symbolic dynamics, see the foundational paper of Rust and Spindeler \cite{RS:random}.

Whereas deterministic substitutions such as the Fibonacci substitution $a \mapsto ab,\: b \mapsto a$ specify a single image for each letter in the alphabet, a random substitution allows for multiple independent images.
So, for instance, the \emph{random Fibonacci substitution} is given by $a \mapsto \{ab, ba\},\: b \mapsto \{a\}$, where each letter in a word can be independently mapped to one of its possible realisations under the random substitution.
By considering all possible realisations under repeated applications of a random substitution, a language is produced and hence a subshift may be defined.

Under mild assumptions, the corresponding subshift is transitive and has positive entropy.
The subshift retains many of the hierarchical properties of its deterministic counterpart (especially when the random substitution is \emph{recognisable}), but positive entropy and a dense collection of minimal subsets mean that they also mirror aspects of more `complex' subshifts, such as shifts of finite type.
The study of random substitutions is still somewhat in its infancy, however aspects that have been studied already include their topological entropy \cite{G:entropy}, measure theoretic entropy \cite{GMRS:entropy}, ergodic measures \cite{GS:ergodic}, periodic points \cite{R:random-periodic}, topological mixing \cite{EMM:noble-means,MRST:mixing}, diffraction \cite{BSS:rand-diffraction} and connections to SFTs \cite{GRS:random-sft}.
Here, we begin the first investigation of their automorphism groups.
We focus on subgroups that can appear within the automorphism group and conditions that are necessary for such subgroups to appear.

Our main results concern the structure of certain subgroups of the automorphism group.
Under some basic assumptions, we show that the automorphism group of a random substitution subshift contains an infinite simple subgroup (Theorem \ref{THM:alternating-shuffle-simple}), as well as the automorphism group of the full $2$-shift (Theorem \ref{THM:auto-full-shift}).
Section \ref{SEC:prelim} introduces the necessary notation and preliminaries on random substitution subshifts and automorphism groups of subshifts.
In Section \ref{SEC:shuffles}, we introduce the concept of \emph{shuffles} and the shuffle group, which is an important class of automorphisms on which our main methods are based.
Many of our methods rely on a notion of recognisability for random substitutions, which has been previously hinted at in previous work \cite{R:random-periodic}, but which we fully investigate and exploit in Section \ref{SEC:recog}; most notably proving an equivalence between recognisability and local recognisability for compatible random substitutions (Proposition \ref{PROP:local-global}) in Section \ref{SEC:local-recog}.
Recognisability is less ubiquitous in the random setting than in the deterministic setting (even the random Fibonacci substitution is not recognisable).
So, it is important to study random substitutions that only satisfy weaker notions of recognisability.
We introduce the concept of a word being recognisable.
The existence of a single recognisable word is often enough to induce a weak hierarchical structure.
Hence, when the random substitution is of constant length and admits at least one recognisable word in its language, we are still able to show in Section \ref{SEC:const-length} that the automorphism group of a full shift embeds in the automorphism group of the random substitution subshift (Theorem \ref{THM:const-length-full-shift}).

\section{Notation and preliminaries}\label{SEC:prelim}

Let $\mc A = \{a_1, \ldots, a_d\}$ be a finite alphabet with $d$ letters, let $\mc A^n$ denote the set of length-$n$ words over $\mc A$, and let $|u| \coloneqq n$ denote the length of the word $u \in \mc A^n$.
Let $\mc A^+ = \bigcup_{n=1}^\infty \mc A^n$ and let $\mc A^\ast = \mc A^+ \cup \{\epsilon\}$, where $\epsilon$ is the \emph{empty word}.
Let $\mc A^\Z$ denote the \emph{full shift} over $\mc A$ with the usual product topology and for $x \in \mc A^\Z$ and $j \geq i$, write $x_{[i,j]} \coloneqq x_i \cdots x_j$.
Let $\sigma \colon \mc A^\Z \to \mc A^\Z$ denote the usual shift action given by $\sigma(x)_i := x_{i+1}$.
If $u = u_0 \cdots u_{n}$ and $v = v_0 \cdots v_{m}$ are words in $\mc A^\ast$, then their \emph{concatenation}
\[
uv = u_0 \cdots u_{n} v_0 \cdots u_{m}
\]
is a word in $\mc A^{\ast}$ and the set $\mc A^\ast$ forms a monoid under concatenation with the identity given by the empty word $\epsilon$.
If $u \in \mc A^{n}$ is a finite word and $v$ is a finite or (bi-)infinite word such that
$u_i = v_{k+i}$ for some $k$ and all $0 \leq i \leq {n-1}$, then we say that $u$ is a \emph{subword} of $v$ and write $u \triangleleft v$.
If $k = 0$ then we call $u$ a \emph{prefix} of $v$, and if $k = |v|-n$ then we call $u$ a \emph{suffix} of $v$.

\subsection{Random substitutions}
Before defining random substitutions, some attention should be granted to our nomenclature.
In the literature, random substitutions appear in two technically distinct guises; a measure theoretic formalism in which probabilities of image-words are considered, and a topological formalism in which they are not (and which is independent of any chosen non-degenerate probabilities).
We are principally concerned here with random substitutions as a purely topological or combinatorial object.
If we were to consider questions such as the group of measure isomorphisms or any ergodic theoretical questions, this topological setup would be insufficient and probabilities would need to be considered.
Some authors such as Gohlke and Spindeler \cite{GS:ergodic} choose to differentiate the two formalisms by reserving the term `random substitution' only for the measure theoretic formalism and instead referring to the objects appearing in this article as `multi-valued' or `set-valued' substitutions.
For historical reasons \cite{RS:random} and for the sake of brevity, we adopt the slightly ambiguous terminology.

If $U,V \subseteq \mc A^\ast$ are two non-empty sets of words, then we define their concatenation to be the set
\[
UV \coloneqq \{uv \mid u \in U, v \in V\} \subset \mc A^\ast
\]
with the obvious generalisation to the concatenation of finitely many and countably many sets of words.
Let $\mc S$ denote the set of non-empty finite subsets of $\mathcal{A}^+$.
A \emph{random substitution} is a function $\rsub \colon \mc A \to \mc S$.
We extend $\rsub$ to words by defining $\rsub(u_0\cdots u_n) \coloneqq \rsub(u_0)\cdots \rsub(u_n)$ and to finite sets of words $\rsub \colon \mc S \to \mc S$ by defining $\rsub(U) \coloneqq \{\rsub(u) \mid u \in U\}$.
In this way, we may define iterated powers of $\rsub$ by $\rsub^0 \coloneqq \operatorname{Id}_{\mc S}$ and $\rsub^{p+1} \coloneqq \rsub \circ \rsub^p$.

As an example, on the alphabet $\{a,b\}$, the \emph{random Fibonacci substitution} is given by
\[
\rsub \colon a \mapsto \{ab, ba\},\quad b \mapsto \{a\}
\]
with the second and third iterate of the random substitution on the letter $a$ being
\[
\rsub^2(a) = \{aba, baa, aab\},\:\:  \rsub^3(a) = \{abaab, ababa, baaab, baaba, aabab, aabba, abbaa, babaa\}.
\]

A word $u \in \rsub(v)$ is called a \emph{realisation} of the random substitution $\rsub$ on $v$.
Similarly, we call a bi-infinite sequence $x \in \rsub(y)$ a \emph{realisation} of the random substitution $\rsub$ on the bi-infinite sequence $y \in \mc A^\Z$.
If $u \in \vartheta^p(a)$ for some letter $a \in \mc A$, then we call $u$ a \emph{level-$p$ inflation word} or \emph{level-$p$ supertile}.

Let $U \in \mc S$ be a set of words.
If there exists a word $v \in U$ such that $u \triangleleft v$, then we write $u \blacktriangleleft U$.
We say that a word $u \in \mc A^+$ is \emph{$\rsub$-legal} or just \emph{legal} if there exists a letter $a \in \mc A$ and a power $p \geq 0$ such that $u \blacktriangleleft \rsub^p(a)$.
We let $\mc L_\rsub \subset \mc A^\ast$ denote the set of all $\rsub$-legal words and call $\mc L_\rsub$ the \emph{language} of $\rsub$.
We let $\mc L^n_\rsub := \mc L_\rsub \cap \mc A^n$ denote the set of $\rsub$-legal words of length $n$.

The \emph{random substitution subshift} (\emph{RS-subshift}) of $\rsub$, written as $X_\rsub$, is the set of bi-infinite words in $\mc A^\Z$ whose subwords are all $\rsub$-legal. That is,
\[
X_\rsub \coloneqq \{ x \in \mc A^\Z \mid u \triangleleft x \implies u \in \mc L_\vartheta\}.
\]
The RS-subshift $X_\rsub$ associated to a random substitution $\rsub$ is a closed, shift-invariant subspace of the full shift $\mc A^\Z$.
The random substitution $\rsub$ extends to a set-valued function $X_\rsub \rightrightarrows X_\rsub$, or more properly a function $X_\rsub \to 2^{X_\rsub}$, which extends by taking unions to a function $2^{X_\rsub} \to 2^{X_\rsub}$ that can be iterated.

Let $|u|_v$ denote the number of (possibly overlapping) appearances of the word $v$ as a subword of $u$ and let $\psi \colon \mc A^\ast \to \N_0^d$ denote the \emph{abelianisation function} $\psi \colon u \mapsto (|u|_{a_1}, \ldots, |u|_{a_d})^T$.

\begin{definition}
Let $\rsub$ be a random substitution on the alphabet $\mc A$.
We call $\rsub$ \emph{compatible} if for all $a \in \mc A$ and every pair of words $u, v \in \rsub(a)$, $\psi(u) = \psi(v)$.
The \emph{substitution matrix} $M_\rsub$ associated with a compatible random substitution is given by $(M_\rsub)_{ij} = |\rsub(a_j)|_{a_i}$, which is well defined by compatibility.
\end{definition}
\begin{definition}
Let $\rsub$ be a random substitution on the alphabet $\mc A$.
We call $\rsub$ \emph{primitive} if there exists a power $p \geq 1$ such that for all $a,b \in \mc A$, $a \blacktriangleleft \rsub^p(b)$.
\end{definition}
If $\rsub$ is compatible, then $\rsub$ is primitive if and only if $M_\rsub$ is a primitive matrix.
That is, if $M_\rsub^p$ has positive entries for some power $p \geq 1$.
For the majority of this article, we will only consider compatible primitive substitutions.

A random substitution $\rsub \colon \mc A \to \mc S$ is said to be \emph{constant length} with length $L$ if, for every $a \in \mc A$ and $u \in \rsub(a)$, $|u| = L$.
An example of a compatible random substitution of constant length is the \emph{random period doubling} substitution $\rsub \colon a \mapsto \{ab,ba\},\: b \mapsto \{aa\}$ with constant length $L = 2$, first introduced by Baake, Spindeler and Strungaru \cite{BSS:rand-diffraction}.
\begin{definition}\label{DEF:recog}
Let $\rsub$ be a compatible random substitution on the alphabet $\mc A$.
We call $\rsub$ \emph{recognisable} if for every $x \in X_\rsub$, there exists a unique $y \in X_\rsub$ and a unique $0 \leq k \leq |\rsub(y_0)| - 1$ such that $\sigma^{-k}(x) \in \rsub(y)$.
\end{definition}
Note that the important part of Definition \ref{DEF:recog} is the uniqueness of the pair $(y,k)$ and not existence, which is guaranteed by \cite[Lem. 12]{RS:random} (in the primitive setting).

Rust showed that recognisability of $\rsub$ implies aperiodicity of $X_\rsub$ \cite{R:random-periodic}.
That is, if $\rsub$ is recognisable, then the RS-subshift $X_\rsub$ contains no periodic points.
The converse is in general not true, as the random Fibonacci substitution $a \mapsto \{ab,ba\}, b \mapsto \{a\}$ is not recognisable but, by considering letter frequencies, its RS-subshift is aperiodic.

While recognisability is a useful property, and the above global definition is theoretically useful, it is often difficult to directly verify for examples.
Instead, one generally checks an equivalent local version of recognisability.
This will be introduced and studied in Section \ref{SEC:local-recog}, where the local and global definitions will be shown to be equivalent.
Until then, without loss of rigour, we interpret recognisability of a compatible random substitution as meaning that for any $x \in X_\rsub$, there is a unique decomposition of $x$ into inflation words, and the type of each inflation word is also uniquely determined.

It is straightforward to check \cite{R:random-periodic} that $\rsub$ is recognisable if and only if $\rsub^n$ is recognisable for all $n \geq 1$.

\begin{example}
Consider the substitution $\rsub$ on the alphabet $\{a,b\}$ given by
\[
a \mapsto \{abb, bab\}, \quad b \mapsto\{aa\}.
\]
First, notice that we can determine how to decompose any string of consecutive $a$s of length at least two into inflation words, as the start of any such string must begin with an inflation word of type $\rsub(b)$.
So, for instance, the word $baaaaab$ must be decomposed as $b|aa|aa|ab$.
Notice also that every word of length at least twenty four must contain the word $aa$, and so $aa$ occurs with uniformly bounded gaps in any element $x \in X_\rsub$.
As all inflation words of type $\rsub(b)$ are uniquely determined, anything not yet determined must be a concatenation of inflation words of type $\rsub(a)$, which then are also uniquely determined as they both have length three.
It follows that $\rsub$ is recognisable.

So, for example, the word $baaaabababbabbaaab$ can be decomposed by first decomposing the strings of $a$s as
\[
b|aa|aa|bababbabb|aa|ab
\] and then noticing that there is exactly one choice in how the word between the strings of $a$s can be decomposed.
So the entire word has the unique decomposition 
\[
b|aa|aa|bab|abb|abb|aa|ab.
\]
\end{example} 

\begin{example}[random square Fibonacci]\label{EX:fib-squared}
The following example is worth mentioning as it is related to the square of the Fibonacci substitution\footnote{The marginal $a \mapsto aab,\: b\mapsto ab$ is conjugate to the square of the Fibonacci substitution $a \mapsto aba, \: b \mapsto ab$. Hence, the usual Fibonacci subshift is contained in the RS-subshift of $\rsub$.} but has the advantage that, unlike the usual random Fibonacci substitution, this one is recognisable.
Consider the random substitution $\rsub$ on $\{a,b\}$ given by
\[
a \mapsto \{aab\}, b \mapsto \{ab,ba\}.
\]
This example first appeared (reversed) in the work of Nilsson \cite{N:random-two}.
We check recognisability as follows.
Any appearance of $bb$ must mean that the second $b$ comes from the start of an inflation word of type $\rsub(b)$ and all inflation words $ba$ of type $\rsub(b)$ are found in this way.
As $bbb$ is not legal, any appearance of the word $abaab$ can only be decomposed as $ab|aab$. Any appearance of the word $baaab$ can only be decomposed as $ba|aab$, and all inflation words $aab$ of type $\rsub(a)$ are found in either of these two ways.
Anything not yet identified must then be the inflation word $ab$ of type $\rsub(b)$.
Recognisability then follows.
\end{example}
The following condition is a useful consequence of recognisability for compatible random substitutions.
\begin{definition}
A random substitution $\rsub$ satisfies the \emph{disjoint set condition} if for all $a \in \mc A$, $u,v \in \rsub(a)$ and $k \geq 1$, we have $\rsub^k(u) \cap \rsub^k(v) \neq \emptyset \implies u=v$.
\end{definition}
The disjoint set condition always holds for recognisable compatible random substitution \cite[Lem.\ 4.5]{GMRS:entropy}.

\subsection{Automorphisms}\label{SUBSEC:autos}
Let $(X,\sigma)$ and $(Y,\tau)$ be topological dynamical systems.
A function $f \colon X \to Y$ is called a \emph{topological conjugacy} if $f$ is a homeomorphism and $f \circ \sigma = \tau \circ f$.
A topological conjugacy $h \colon X \to X$ is called an \emph{automorphism}.
We denote the set of all automorphism of $(X,\sigma)$ by
\[
\Aut(X,\sigma)\coloneqq \{h \colon X \to X \text{ homeomorphism} \mid h \circ \sigma = \sigma \circ h\},
\]
which forms a group under composition.
If the action is unambiguous, then we write $\Aut(X) \coloneqq \Aut(X,\sigma)$ without loss of generality.

For a subshift $X$, the shift map $\sigma$ is always an automorphism of $X$ and so, if $X$ contains a non-periodic orbit, then $\langle \sigma \rangle \cong \Z$ is an infinite subgroup of $\Aut(X)$. Indeed, $\Aut(X)$ is the centraliser of the element $\sigma$ in the group $\operatorname{Homeo}(X)$ of homeomorphisms of the subshift\footnote{This observation recently inspired Baake, Roberts and Yassawi \cite{BRY:reversing-syms} to consider the \emph{normaliser} of $\sigma$ in $\operatorname{Homeo}(X)$, rather than the centraliser, the so-called group of `reversing symmetries' of the subshift.}.
It is a consequence of the Curtis--Hedlund--Lyndon Theorem \cite{H:curtis-hedlund-lyndon} that $\Aut(X)$ is countable.

Recall that a group $G$ is residually finite if for every $g \in G \setminus \{1\}$, there exists a finite group $H$ and a homomorphism $h \colon G \to H$ such that $h(g) \neq 1$.

It was shown by various authors that for $X$ a mixing shift of finite type, $\Aut(X)$ contains subgroups isomorphic to the following:
\begin{itemize}
\item every countable direct sum of finite groups \cite{BLR:auto-sft},
\item the free group $F_2$ on two generators, hence the free group on countably many generators \cite{BLR:auto-sft},
\item a countable direct sum of the integers $\bigoplus_{i=1}^\infty \Z$ \cite{BLR:auto-sft},
\item finite free products of finite groups \cite{A:free-product-finite},
\item every residually finite countable group that is a union of finite groups \cite{KR:automorphisms},
\item all right-angled Artin groups, including the fundamental group of any surface \cite{KR:automorphisms}.
\end{itemize}
Boyle, Lind and Rudolph further observed that $\Aut(X)$ is residually finite, and therefore any finitely presented subgroup has a solvable word problem (in fact any finitely \emph{generated} subgroup).
$\Aut(X)$ itself is not finitely generated \cite[Theorem 7.8]{BLR:auto-sft}.
Kim and Roush proved in \cite{KR:automorphisms} that the set of subgroups of $\Aut(X)$ is closed under virtual extensions, and Salo proved that they are also closed under cograph products (free and direct products) \cite{S:closed-free-prod} and general countable graph products \cite{S:graph-prod}.
Combining these, we immediately get all the items on the above list and more.

The statement above regarding residually finite automorphism groups is a general statement about subshifts with a dense subset of periodic points.
As far as we are aware, this result is part of the folklore of automorphism groups of subshifts.
A proof for mixing subshifts of finite type appears in the paper of Boyle, Lind and Rudolph \cite[Theorem 3.1]{BLR:auto-sft}, although their proof also readily applies to the general case.
As this will play a role later, we provide here a short proof of this statement in order to remain as self-contained as possible.
\begin{prop}\label{PROP:periodic}
Let $X$ be a subshift such that the set of periodic points $\Per(X)$ is dense in $X$.
The automorphism group $\Aut(X)$ is residually finite.
\end{prop}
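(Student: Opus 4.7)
The plan is to exploit the fact that automorphisms of a subshift commute with all powers of $\sigma$, and therefore must permute, for each $n$, the finite set of points fixed by $\sigma^n$. This gives, for each $n$, a homomorphism from $\Aut(X)$ into a finite symmetric group, and together these homomorphisms will separate every non-identity automorphism from the identity.

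More precisely, for each $n \geq 1$ let
\[
\Per_n(X) \coloneqq \{x \in X \mid \sigma^n(x) = x\}.
\]
Any such $x$ is determined by the word $x_{[0,n-1]}$, so $|\Per_n(X)| \leq |\mc A|^n < \infty$. Since any $\varphi \in \Aut(X)$ satisfies $\varphi \circ \sigma^n = \sigma^n \circ \varphi$, we have $\varphi(\Per_n(X)) = \Per_n(X)$, so restricting yields a group homomorphism
\[
\rho_n \colon \Aut(X) \to \Sym(\Per_n(X))
\]
into the finite symmetric group on $\Per_n(X)$.

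Now I would show that for any $\varphi \in \Aut(X) \setminus \{1\}$, there exists $n$ with $\rho_n(\varphi) \neq 1$. The set $F_\varphi \coloneqq \{x \in X \mid \varphi(x) = x\}$ is closed by continuity of $\varphi$, so its complement $X \setminus F_\varphi$ is a non-empty open subset of $X$ (non-empty because $\varphi \neq \mathrm{id}$). By the hypothesis that $\Per(X) = \bigcup_n \Per_n(X)$ is dense in $X$, there exists a periodic point $p \in X \setminus F_\varphi$. Letting $n$ be any period of $p$, we have $p \in \Per_n(X)$ and $\varphi(p) \neq p$, so the permutation $\rho_n(\varphi)$ moves $p$, and in particular $\rho_n(\varphi) \neq 1$.

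Combining the two steps: every non-identity element of $\Aut(X)$ survives in a finite quotient, which is exactly the statement that $\Aut(X)$ is residually finite. The only genuinely delicate point is the passage from $\varphi \neq \mathrm{id}$ to a periodic witness, and this is handled immediately by continuity of $\varphi$ together with density of $\Per(X)$; no use of the Curtis--Hedlund--Lyndon theorem is needed beyond the continuity built into the definition of an automorphism.
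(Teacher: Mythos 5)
Your proof is correct and follows essentially the same route as the paper: both restrict automorphisms to the finite, $\Aut(X)$-invariant sets of $n$-periodic points to obtain homomorphisms into finite groups, and both use density of $\Per(X)$ (together with continuity, i.e.\ closedness of the fixed-point set) to find a periodic point moved by any non-identity automorphism. The only cosmetic difference is that the paper first passes through the intermediate group $\Aut(\Per(X))$, whereas you work with the maps $\rho_n$ directly.
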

\begin{proof}
The set of periodic points $P \coloneqq \Per(X)$ is shift-invariant.
An automorphism of $X$ induces an automorphism of $P$.
Since $P$ is dense, the restriction $\Aut(X) \to \Aut(P)$ is injective and so $\Aut(X)$ embeds as a subgroup of $\Aut(P)$.
It therefore suffices to show that $\Aut(P)$ is residually finite.

Let $P(n)$ be the set of points of period $n$ in $X$.
There is an induced map $f_n \colon \Aut(P) \to \Aut(P(n))$ given by restriction.
Note that $\Aut(P(n))$ is finite, as $P(n)$ is finite.
For every automorphism $g \in \Aut(P)\setminus\{1\}$, there is a natural number $n$ such that $g(x) \neq x$ for some $x \in P(n)$.
Hence, $f_n(g) \neq 1$, and so $\Aut(P)$ is residually finite.
\end{proof}

\section{The shuffle group}\label{SEC:shuffles}
In this section, we introduce an important building block and precursor to many of the automorphisms that we will construct on RS-subshifts, so-called \emph{shuffles}.
Deterministic substitutions are also random substitutions, however they are precisely the substitutions for which shuffles will turn out to always be trivial---deterministic substitutions are also extremely well-studied.
We therefore restrict our attention to the study of subshifts coming from properly random substitutions, and we treat the deterministic case as `degenerate'.
\begin{definition}
A random substitution $\rsub$ is \emph{degenerate} if $\#\rsub(a) = 1$ for all $a \in \mc A$ and \emph{non-degenerate} otherwise.
A random substitution $\rsub$ is \emph{$k$-branching on $a \in \mc A$} if there exists $p \geq 1$ with $\#\rsub^p(a) \geq k$, is \emph{branching on $a$} if $\rsub$ is $k$-branching on $a$ for all $k \geq 1$, and is \emph{branching} if $\rsub$ is branching on $a$ for each $a \in \mc A$.
\end{definition}
It turns out that one requires only very weak conditions in order to have branching.
\begin{prop}\label{PROP:branching}
If $\rsub$ is a non-degenerate, primitive random substitution and $X_\rsub$ is non-empty, then $\rsub$ is branching.
\end{prop}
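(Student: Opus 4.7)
The plan is to use one non-degenerate letter $a_0 \in \mc A$ (with distinct $u_1, u_2 \in \rsub(a_0)$) and amplify its non-degeneracy via primitivity. Fix a primitivity power $p_0$, so that $a \blacktriangleleft \rsub^{p_0}(b)$ for all $a, b \in \mc A$. The argument proceeds in two steps: first, find realisations of $\rsub^q(a_0)$, for $q$ arbitrarily large, that contain arbitrarily many copies of $a_0$; second, at those copies, independently substitute $u_1$ or $u_2$ to produce many distinct elements of $\rsub^{q+1}(a_0)$.

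For the first step, note that the hypothesis $X_\rsub \neq \emptyset$ forces $\mc L_\rsub$ to contain words of arbitrary length, so for any $L$ some realisation $w \in \rsub^p(a)$ satisfies $|w| \geq L$. I would embed such a long realisation inside $\rsub^{p+p_0}(a_0)$ by choosing $w^* \in \rsub^{p_0}(a_0)$ with $a \triangleleft w^*$ and taking the element of $\rsub^p(w^*)$ that uses $w$ at the designated $a$-position. Applying $\rsub^{p_0}$ once more to the resulting long realisation, and at each of its letter-positions selecting a $\rsub^{p_0}$-realisation that contains $a_0$ (again by primitivity), produces an element of $\rsub^{p+2p_0}(a_0)$ with at least $L$ occurrences of $a_0$. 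Hence the maximum number of $a_0$'s in a realisation of $\rsub^r(a_0)$ is unbounded in $r$.

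For the second step, take a realisation $w \in \rsub^r(a_0)$ with $k$ occurrences of $a_0$ at positions $i_1 < \cdots < i_k$, fix an arbitrary realisation at each non-$a_0$ position of $w$, and independently assign $x_j \in \{u_1, u_2\}$ at the $j$-th $a_0$-position; each choice sequence lies in $\rsub(w) \subseteq \rsub^{r+1}(a_0)$. The main obstacle is to verify that different choice sequences produce distinct concatenations, which I would handle by cases. If $|u_1| = |u_2|$, each block containing an $x_j$ sits in a determined position range and can be read off, so all $2^k$ sequences give distinct outputs. If $|u_1| \neq |u_2|$, the count of $u_2$-choices among $(x_1, \ldots, x_k)$ determines the total length and so yields $k+1$ distinct length classes. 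Either way, $\#\rsub^{r+1}(a_0) \geq k+1$, and combined with the first step this gives branching.
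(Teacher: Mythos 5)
Your argument is correct. The paper itself leaves this proposition as an exercise, offering only the hint that non-emptiness of $X_\rsub$ yields a letter whose image contains a word of length greater than one; your proof is a complete solution in exactly that spirit: primitivity pumps up the number of occurrences of the non-degenerate letter $a_0$ inside realisations of $\rsub^r(a_0)$, and your case split on $|u_1|=|u_2|$ versus $|u_1|\neq|u_2|$ correctly disposes of the only genuine subtlety, namely that distinct choice sequences could a priori collapse to the same concatenation.
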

We leave the proof as an exercise to the reader, but note that when $X_\rsub$ is non-empty, then by results in \cite{RS:random}, there exists a letter $a \in \mc A$ such that $\rsub(a)$ contains a word of length greater than 1.

\begin{remark}
Primitivity is only a sufficient condition in the above proposition, as the non-primitive branching substitution $a \mapsto \{ab, ba\}, \: b \mapsto \{a\}, \: c \mapsto \{aca\}$ illustrates.
\end{remark}

\begin{definition}
Let $\rsub$ be a recognisable compatible random substitution and let $x \in X_\rsub$ be an element of the RS-subshift.
By recognisability, for some $0 \leq k \leq |\rsub^n(y_0)|-1$, we may unambiguously write $x$ as a bi-infinite concatenation of level-$n$ inflation words as
\[
\sigma^{k}\left(\cdots u_{-1}.u_0 u_1 \cdots \right)  \quad \in \quad  \sigma^{k}\left(\cdots\rsub^n(y_{-1}).\rsub^n(y_0)\rsub^n(y_1) \cdots\right).
\]
Let $a \in \mc A$ and let $\alpha \in \Sym(\rsub^n(a))$ be a permutation of the set of level-$n$ inflation words $\rsub^n(a)$.
We define the map $f_\alpha \colon X_\rsub \to X_\rsub$ by applying the permutation $\alpha$ to each $u_i$ of type $\rsub^n(a)$ in the above decomposition of $x$.
\end{definition}
Note that such a map is well-defined for recognisable substitutions and it is clearly an invertible function that commutes with the shift action. Continuity of $f_\alpha$ follows from the equivalence of recognisability with a local version of recognisability that will be shown in Section \ref{SEC:local-recog}.
It follows that $f_\alpha$ is an automorphism of $X_\rsub$.
So, the set
\[
\Gamma_{n,a} \coloneqq \{f_\alpha \colon X_\rsub \to X_\rsub \mid \alpha \in \Sym(\rsub^n(a))\}
\] forms a group of automorphisms of $X_\rsub$ isomorphic to the symmetric group $S_{\#\rsub^n(a)}$. We similarly define
\[
\Gamma_{n} \coloneqq \langle \Gamma_{n,a} \mid a \in \mc A \rangle = \prod_{a \in \mc A} \Gamma_{n,a}, \qquad \Gamma \coloneqq \bigcup_{n \geq 1} \Gamma_n,
\]
and call $\Gamma$ the \emph{shuffle group} of $\rsub$.
An element $f_\alpha \in \Gamma_n$ is called a \emph{level-$n$ shuffle}.
Note that $\Gamma_{n}$ is a subgroup of $\Gamma_{n+1}$. This is a consequence of the disjoint set condition, as any set of permutations of the sets $\rsub^k(a_i)$, $a_i \in \mc A$ induces (by concatenation) a well-defined permutation of the sets $\rsub^k(u)$ for $u \in \rsub(a_j)$, and any word $v \in \rsub^{k+1}(a_j)$ belongs to exactly one of the sets $\rsub^{k}(u)$ for $u \in \rsub(a_j)$. Hence, a well-defined permutation is defined on the whole of $\rsub^{k+1}(a_j)$ for each $a_j \in \mc A$. So, $\Gamma$ is an infinite union of nested subgroups of $\Aut(X_\rsub)$. It follows that $\Gamma$ is also a subgroup of $\Aut(X_\rsub)$.

\begin{example}\label{EX:shuffle}
Let $\rsub$ be the random square Fibonacci substitution given by
\[
\rsub \colon a \mapsto \{aab\}, b \mapsto \{ab,ba\}.
\]
Write $u_1 = ab$ and $u_2 = ba$.
We already showed that $\rsub$ is recognisable in Example \ref{EX:fib-squared}.

Let $\alpha$ be the non-trivial element of $\Sym(\rsub(b))$. The shuffle $f_\alpha \in \Gamma_{1,b}$ is the one that replaces all occurrences of $u_1$ in an element of $X_\rsub$ by $u_2$ and vice versa. So, for instance, the action of $f_\alpha$ maps in the following way
\begin{center}
\begin{tikzpicture} 
\node (1) at (0,0) {$\cdots \overbracket{aab}^{a} \: \overbracket{aab}^{a} \: \overbracket{ab}^{b} \: \overbracket{aab}^{a} \: \overbracket{aab}^{a} \: \overbracket{ba}^{b} \: \overbracket{aab}^{a} \: \overbracket{ba}^{b} \: \overbracket{aab}^{a} \: \overbracket{aab}^{a} \: \overbracket{ba}^{b} \: \overbracket{ab}^{b} \: \overbracket{aab}^{a} \: \overbracket{ba}^{b}\: \overbracket{aab}^{a}\: \overbracket{aab}^{a}\: \overbracket{aab}^{a}\: \overbracket{ab}^{b} \cdots\phantom{,}$};
\node (2) at (0,-1.8) {$\cdots \underbracket{aab}_{a} \: \underbracket{aab}_{a} \: \underbracket{ba}_{b} \: \underbracket{aab}_{a} \: \underbracket{aab}_{a} \: \underbracket{ab}_{b} \: \underbracket{aab}_{a} \: \underbracket{ab}_{b} \: \underbracket{aab}_{a} \: \underbracket{aab}_{a} \: \underbracket{ab}_{b} \: \underbracket{ba}_{b} \: \underbracket{aab}_{a} \: \underbracket{ab}_{b}\: \underbracket{aab}_{a}\: \underbracket{aab}_{a}\: \underbracket{aab}_{a}\: \underbracket{ba}_{b} \cdots.$}; 

\node (1a) at (-4.46,-0.35) {};
\node (1b) at (-2.34,-0.35) {};
\node (1c) at (-1.03,-0.35) {};
\node (1d) at (1.07,-0.35) {};
\node (1e) at (1.62,-0.35) {};
\node (1f) at (2.95,-0.35) {};
\node (1g) at (5.83,-0.35) {};

\node (2a) at (-4.46,-1.5) {};
\node (2b) at (-2.34,-1.5) {};
\node (2c) at (-1.03,-1.5) {};
\node (2d) at (1.07,-1.5) {};
\node (2e) at (1.62,-1.5) {};
\node (2f) at (2.95,-1.5) {};
\node (2g) at (5.83,-1.5) {};

\draw [->,dashed,thick] (1a) -- (2a);
\draw [->,dashed,thick] (1b) -- (2b);
\draw [->,dashed,thick] (1c) -- (2c);
\draw [->,dashed,thick] (1d) -- (2d);
\draw [->,dashed,thick] (1e) -- (2e);
\draw [->,dashed,thick] (1f) -- (2f);
\draw [->,dashed,thick] (1g) -- (2g);
\end{tikzpicture}
\end{center}

A shuffle can be thought of as a local rule on inflation words which can only see as far as the nearest inflation word.
It is therefore also natural to consider \emph{generalised shuffles} that can see further than just the nearest inflation word, in order to give rise to more interesting automorphisms (in analogue to sliding block codes with radius larger than 0). For instance, consider the following example, where the local rule also depends on the next nearest inflation word to the right.
Define an automorphism $f$ that replaces an occurrence of $u_i$ with $u_j$ where $u_j$ is the inflation word of type $\rsub(b)$ that is immediately to the right of $u_i$.
For instance, the same element as before is mapped in the following way under the action of $f$
\begin{center}
\begin{tikzpicture} 
\node (1) at (0,0) {$\cdots \overbracket{aab}^{a} \: \overbracket{aab}^{a} \: \overbracket{ab}^{b} \: \overbracket{aab}^{a} \: \overbracket{aab}^{a} \: \overbracket{ba}^{b} \: \overbracket{aab}^{a} \: \overbracket{ba}^{b} \: \overbracket{aab}^{a} \: \overbracket{aab}^{a} \: \overbracket{ba}^{b} \: \overbracket{ab}^{b} \: \overbracket{aab}^{a} \: \overbracket{ba}^{b}\: \overbracket{aab}^{a}\: \overbracket{aab}^{a}\: \overbracket{aab}^{a}\: \overbracket{ab}^{b} \cdots\phantom{,}$};
\node (2) at (0,-1.8) {$\cdots \underbracket{aab}_{a} \: \underbracket{aab}_{a} \: \underbracket{ba}_{b} \: \underbracket{aab}_{a} \: \underbracket{aab}_{a} \: \underbracket{ba}_{b} \: \underbracket{aab}_{a} \: \underbracket{ba}_{b} \: \underbracket{aab}_{a} \: \underbracket{aab}_{a} \: \underbracket{ab}_{b} \: \underbracket{ba}_{b} \: \underbracket{aab}_{a} \: \underbracket{ab}_{b}\: \underbracket{aab}_{a}\: \underbracket{aab}_{a}\: \underbracket{aab}_{a}\: \underbracket{\,u\:}_{b} \cdots,$}; 

\node (1a) at (-4.46,-0.35) {};
\node (1b) at (-2.34,-0.35) {};
\node (1c) at (-1.03,-0.35) {};
\node (1d) at (1.07,-0.35) {};
\node (1e) at (1.62,-0.35) {};
\node (1f) at (2.95,-0.35) {};
\node (1g) at (5.83,-0.35) {};
\node (1h) at (6.83,-0.35) {};

\node (20) at (-7,-1.4) {};
\node (2a) at (-4.46,-1.4) {};
\node (2b) at (-2.34,-1.4) {};
\node (2c) at (-1.03,-1.4) {};
\node (2d) at (1.07,-1.4) {};
\node (2e) at (1.62,-1.4) {};
\node (2f) at (2.95,-1.4) {};
\node (2g) at (5.83,-1.4) {};

\draw [->,dashed,thick] (1a) -- (20);
\draw [->,dashed,thick] (1b) -- (2a);
\draw [->,dashed,thick] (1c) -- (2b);
\draw [->,dashed,thick] (1d) -- (2c);
\draw [->,dashed,thick] (1e) -- (2d);
\draw [->,dashed,thick] (1f) -- (2e);
\draw [->,dashed,thick] (1g) -- (2f);
\draw [->,dashed,thick] (1h) -- (2g);
\end{tikzpicture}
\end{center}
where $u$ depends on unknown information to the right.
The inverse is then given by replacing each $u_i$ with the inflation word to its immediate left.
Continuity of $f$ is clear, as is the fact that $f$ commutes with the shift.
Indeed, every word of length $12$ contains at least one occurrence of a type-$b$ inflation word, so $f$ is locally defined (by the local version of recognisability that will appear in Section \ref{SEC:local-recog}) and describes a sliding block code.
In essence, this automorphism keeps all type-$a$ inflation words fixed, but `shifts' the type-$b$ inflation words to the left.
Notice that shuffles have finite order, whereas this generalised shuffle $f$ has infinite order.
Aperiodicity of $X_\rsub$ follows from recognisability of $\rsub$.
As $\mc A$ has only two letters, that means the $a$s, and hence also the inflation words of type $\rsub(a)$, are aperiodically positioned and so this automorphism $f$ is not a power of the shift $\sigma$.
Nor is $f$ contained in $\langle \sigma \rangle \Gamma$, the subgroup of $\Aut(X)$ generated by shuffles and shifts.
\end{example}
\begin{remark}\label{REM:gen-shuff}
While a rigorous definition of a generalised shuffle can be given, doing so would be notation-heavy and cumbersome. Instead, we describe them intuitively as the analogue of a finite-radius sliding block code at the level of inflation words of a particular type $\rsub^k(a)$, where the symbols for the sliding block code are given by the elements of the set $\rsub^k(a)$. That is, they are locally defined maps that replace $\rsub^k(a)$-inflation words with a possibly different realisation according to the local configuration of all $\rsub^k(a)$-inflation words out to some fixed radius. We hope that this description and the above illustrative example sufficiently convinces the reader that such maps induce well-defined continuous functions $X_\rsub \to X_\rsub$ that commute with the shift action.

In order for such a map to be well-defined we need that $\rsub$ is recognisable, so that it makes sense to apply the generalised shuffle at the level of inflation words in a unique way. For the induced map to be continuous, we further require that the letter $a$ appears with uniformly bounded gaps in every element of $X_\rsub$ (namely, that for some $C \geq 1$, every legal word of length $C$ contains the letter $a$), so that the corresponding map is uniformly locally defined and we therefore obtain a corresponding sliding block code. This final point will be addressed in Lemma \ref{LEM:uni-bound-gaps} for primitive, compatible random substitutions.
\end{remark}

With the previous example in mind, it is not so surprising that automorphism groups of RS-subshifts are similar in many respects to the automorphism groups of full shifts.
As we shall see later, this pseudo-embedding of the shift on $\{0,1\}^\Z$ into $\Aut(X_\rsub)$ can be generalised to any automorphism on $\{0,1, \ldots, n\}^\Z$. For now though, we concentrate just on the shuffle group, which has its own rich structure.

Clearly, for a degenerate random substitution, the shuffle group is trivial.
Conversely, the shuffle group for a branching substitution is always non-trivial.
In fact, if a random substitution $\rsub$ is branching on $a$, then as $\#\rsub^n(a)$ can be arbitrarily large, and $\Gamma_{n,a}$ is a symmetric group on $\#\rsub^n(a)$ elements, Cayley's theorem tells us that every finite group $G$ embeds into $\Gamma_{n,a}$ for some $n \geq 1$, $a \in \mc A$.
\begin{prop}
Let $\rsub$ be a recognisable, branching, compatible, random substitution.
Any finite group $G$ embeds into the shuffle group $\Gamma$. \qed
\end{prop}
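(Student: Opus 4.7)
The plan is essentially to chase the definitions and use Cayley's theorem together with the branching hypothesis. By Cayley's theorem, $G$ embeds as a subgroup of the symmetric group $S_m$ where $m = |G|$. So it suffices to exhibit, for every $m \geq 1$, a copy of $S_m$ inside $\Gamma$.

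The branching hypothesis gives us exactly what we need: by definition, since $\rsub$ is $k$-branching for every $k \geq 1$, we may pick $k = m$ and obtain a letter $a \in \mc A$ and a power $n \geq 1$ such that $\#\rsub^n(a) \geq m$. Recognisability of $\rsub$ (which, as noted in the excerpt, is equivalent to recognisability of $\rsub^n$) means that the shuffle group $\Gamma_{n,a}$ is well-defined, and by construction $\Gamma_{n,a} \cong \Sym(\rsub^n(a)) \cong S_{\#\rsub^n(a)}$. Since $m \leq \#\rsub^n(a)$, the symmetric group $S_m$ embeds into $S_{\#\rsub^n(a)}$ (acting on a size-$m$ subset and fixing the rest), and hence into $\Gamma_{n,a}$.

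Finally, by the nesting of the $\Gamma_n$ discussed just before the statement, $\Gamma_{n,a} \leq \Gamma_n \leq \Gamma \leq \Aut(X_\rsub)$, so composing the embeddings yields $G \hookrightarrow S_m \hookrightarrow S_{\#\rsub^n(a)} \cong \Gamma_{n,a} \hookrightarrow \Gamma$, as required.

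There is no real obstacle here; the only thing one should mildly check is that the inclusion $\Gamma_{n,a} \hookrightarrow \Gamma$ really is an injective group homomorphism, but this follows from the fact that distinct permutations $\alpha \in \Sym(\rsub^n(a))$ produce distinct maps $f_\alpha$ (any point of $X_\rsub$ contains, by primitivity, occurrences of every level-$n$ inflation word, so two shuffles that differ as permutations differ as maps), and that composition of shuffles corresponds to composition of permutations, both of which are immediate from the definition.
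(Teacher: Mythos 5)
Your argument is correct and is essentially the paper's own: the proposition is stated with an immediate \qed precisely because the preceding discussion already makes your point, namely that Cayley's theorem embeds $G$ into some $S_m$, and branching makes $\#\rsub^n(a)$, hence $\Gamma_{n,a}\cong S_{\#\rsub^n(a)}$, arbitrarily large. The only nitpick is your justification of injectivity: it is not true that every point of $X_\rsub$ realises every level-$n$ inflation word in its (unique) decomposition, but injectivity still holds because for a non-identity $\alpha$ one only needs a single witness $x\in X_\rsub$ whose decomposition contains some $u\in\rsub^n(a)$ with $\alpha(u)\neq u$, and such a point exists by realising $\rsub^n(y)$ appropriately for any $y$ with $y_0=a$.
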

\begin{remark}
Given the above discussion, we now make the standing assumption that all random substitutions under consideration are branching and recognisable, unless stated otherwise.
\end{remark}
Much more can be said about the structure of the shuffle group.
Before introducing general results, we begin with a particularly illuminating example, which showcases some of the more varied subgroups that can be found within the shuffle group.

\begin{example}\label{EX:thompson}
Let $\rsub$ be a random substitution on the alphabet $\mc A = \{0, 1, 2\}$ given by
\[
\rsub \colon 0 \mapsto \{012, 210\},\quad 1 \mapsto \{120, 021\},\quad 2 \mapsto \{201, 102\}.
\]
Notice that $\rsub$ is invariant under cyclic permutation of letters.
That is, if we define $\overline{a} \coloneqq a+1 \pmod 3$ and we extend this to words $u$, so that $\overline{u_1 \cdots u_k} \coloneqq \overline{u_1} \cdots \overline{u_k}$, then $u \in \rsub(a) \iff \overline{u} \in \rsub(\overline{a})$.
Further, this extends to powers, so $u \in \rsub^k(a) \iff \overline{u} \in \rsub^k(\overline{a})$.

This random substitution is primitive and compatible.
To see that $\rsub$ is recognisable, let
\[
 \cdots \:c_{-2}\underbracket{a_{-1}b_{-1}c_{-1}}_{\rsub(y_{-1})} \underbracket{a_{0}\:b_{0}\:c_{0}\:}_{\rsub(y_{0})} \underbracket{a_{1}\:b_{1}\:c_{1}}_{\rsub(y_{1})}\, a_2\:b_2\: \cdots
\]
be a decomposition of $x \in X_\rsub$, with preimage $y \in X_\rsub$.
If there is another decomposition of $x$, then by the fact that $\rsub$ is constant length, the borders of inflation words are offset everywhere from the above decomposition by either $1$ or $2$ to the right.
Without loss of generality, suppose that the offset is $1$.
That is, there is another decomposition of $x$ that looks like
\[
 \cdots \:c_{-2}a_{-1}\underbracket{b_{-1}c_{-1}a_{0}\:}_{\rsub(z_{-1})} \underbracket{b_{0}\:c_{0}\:a_{1}\:}_{\rsub(z_{0})} \underbracket{b_{1}\:c_{1}\:a_{2}}_{\rsub(z_{1})}\, b_2 \: \cdots
\]
for some other preimage $z \in X_\rsub$.

Observe that all inflation words contain exactly one copy of each letter $0,1,2$ and so it follows by induction that $a_i = a_j$ for all $i,j \in \Z$.
Indeed, $a_ib_ic_i$ contains one of each letter as it is an inflation word, and so if $b_ic_ia_{i+1}$ is also an inflation word, then $a_{i+1}$ must be the same as $a_i$.
However, this is impossible because if $a_0 = 0$ for instance, then this would imply that every realisation of $\vartheta(z_i)$ ends with $0$, which cannot be the case if $z_i = 2$ (and infinitely many of the $z_i$ are $2$).
We reach similar contradictions if $a_0 = 1$ or $a_0 = 2$.
It follows that the second decomposition cannot exist and so $x$ has a unique substitutive preimage.
Hence, $\rsub$ is recognisable.

We now define a set of shuffles that respect the symmetry under adding bars.
Let $n \geq 1$ and $\alpha \in \Sym(\rsub^n(0))$.
Given this permutation $\alpha$, define the associated permutations $\overline{\alpha} \in \Sym(\rsub^n(1))$ and $\overline{\overline{\alpha}} \in \Sym(\rsub^n(2))$ by
\[
\overline{\alpha}(\overline{u}) = \overline{\alpha(u)}, \qquad \overline{\overline{\alpha}}(\overline{\overline{u}}) = \overline{\overline{\alpha(u)}}.
\]
Then the automorphism $g_\alpha = f_\alpha \circ f_{\overline{\alpha}} \circ f_{\overline{\overline{\alpha}}}$ is a level-$n$ shuffle.
If we define
\[
G_n \coloneqq \{g_{\alpha} \mid \alpha \in \Sym(\rsub^n(0))\} \leqslant \Gamma_n,
\]
then it is clear that $G_n$ is isomorphic to the symmetric group $S_{\#\rsub^n(0)}$.
We may also define a natural embedding of $G_n$ into $G_{n+1}$.

Note that the number of inflation words in $\rsub^n(0)$ is calculated to be
\[
\prod_{i=0}^{n-1} 2^{3^i} = 2^{a(n)},
\]
where $a(n) = \frac{1}{2}\left(3^{n}-1\right) \in \N$ and so is an integral power of $2$.
Hence, $G_n \cong S_{2^{a(n)}}$. Write $\iota_n \colon G_n \to S_{2^{a(n)}}$ for this isomorphism.
The inclusion $\gamma_n \colon G_n \hookrightarrow G_{n+1}$ is defined as follows.
Let $\alpha \in \Sym(\rsub^n(0))$ and $g_\alpha \in G_n$ and write $\alpha_0 = \alpha$, $\alpha_1 = \overline{\alpha}$ and $\alpha_2 = \overline{\overline{\alpha}}$.
Let $u \in \rsub^{n+1}(0)$.
The word $u$ is a concatenation of three words $u = u_1 u_2 u_3$, where $u_i \in \rsub^n(j(i))$ for $j(i) \in \mc A$.
We define $\beta(u) = \alpha_{j(1)}(u_1)\alpha_{j(2)}(u_2)\alpha_{j(3)}(u_3)$ and then the embedding $\gamma_n$ is given by $g_\alpha \mapsto g_\beta$.

The direct limit
\[
G = \varinjlim\left(G_n \stackrel{\iota_n}{\hookrightarrow} G_{n+1}\right)
\]
has a structure that is similar to an infinite symmetric group, in that they are direct limits of increasing chains of symmetry groups with embeddings as connecting maps given by the maps $\iota_{n+1} \circ \gamma_n \circ \iota_{n}^{-1}\colon S_{2^{a(n)}} \hookrightarrow S_{2^{a(n+1)}}$. In particular, they are locally finite, infinite groups.

Write $\gamma_{nm} = \gamma_{m-1} \circ \cdots \circ \gamma_{n+1} \circ \gamma_n$. Notice that if $\alpha \in S_{2^{a(n)}}$ is an even permutation, then $\gamma_{nm}(g_\alpha) = g_\beta$ for a permutation $\beta \in S_{2^{a(m)}}$ that is also even (this is a general property of homomorphic images of alternating groups in symmetric groups).
Hence, $\gamma_{nm}$ restricts to an embedding of alternating groups $A_{2^{a(n)}} \hookrightarrow A_{2^{a(m)}}$.
The direct limit $H = \varinjlim(A_{2^{a(n)}} \hookrightarrow A_{2^{a(n+1)}})$ associated with these embeddings is a union of simple groups and is therefore itself simple.
Moreover, $H$ is clearly an infinite group, and so $\Gamma$, and hence $\Aut(X_\rsub)$, contains an infinite simple subgroup.
It follows that $\Aut(X_\rsub)$ cannot be residually finite.
\end{example}

We emphasise the remarkable property that $\Aut(X_\rsub)$ contains both the automorphism group $\Aut(X_2)$ of a full shift and an infinite simple subgroup.
Other such examples exist, but of a rather artificial flavour, such as the product of the Thue-Morse subshift $X_{TM}$ and $X_2$ with the diagonal action\footnote{This is a non-trivial result, the proof of which is beyond the scope of this work. Unfortunately, we are not aware of a reference.}.
Whereas, most would agree that $X_\rsub$ is a more natural example.
Of course, it should be noted that $X_{TM} \times X_2$ is in fact an RS-subshift with (non-compatible, non-recognisable) random substitution given by
\[
\rsub \colon a_i \mapsto \{a_0 b_0, a_0 b_1, a_1 b_0, a_1 b_1\}, \quad b_i \mapsto \{b_0 a_0, b_0 a_1, b_1 a_0, b_1 a_1\}, \quad i \in \{0,1\}.
\]

There are few known classes of examples for which the automorphism group contains an infinite simple subgroup.
Perhaps the simplest example is the subshift $X_{\leq 2}$ comprising all sequences in $\{0,1\}$ for which the letter $1$ appears at most twice.
Indeed, Salo and Shraudner \cite{SS:not-res-fin} showed that if a countable subshift $X$ contains an infinite set of isolated points in distinct orbits with the same eventually periodic tails, then $\Z^\omega \rtimes S_\infty$ is a subgroup of $\Aut(X)$. Here, $S_\infty$ is the symmetric group on $\N$, and so it contains the infinite alternating group $A_\infty$; the subgroup of elements that are products of an even number of transpositions.
In $X_{\leq 2}$, such a set of isolated points is given by $\{\cdots 0010^n100\cdots \mid n \geq 0\}$.
However, our subshift $X_\rsub$ is very different, being uncountable and containing no isolated points.
In particular, $X_\rsub$ is even transitive.

\subsection{The alternating shuffle group}
The shuffle group in the last example was shown to contain an infinite simple subgroup, and therefore the automorphism group $\Aut(X_\rsub)$ is not residually finite.
This property applies more generally than just for this example.
To show this, we need to introduce some results from basic group theory.

For the benefit of the reader, we restate Goursat's Lemma \cite[Theorem 5.5.1]{H:Goursat}.

\begin{lemma}[Goursat's Lemma]
Let $G = G_1 \times G_2$ be a product of groups with projection maps $\pi_i$ onto $G_i$.
Let $N$ be a subgroup of $G$ with the property that $\pi_i(N) = G_i$ and write $N_1 = N \cap \ker \pi_2$, $N_2 = N \cap \ker \pi_1$.
Then, the image of $N$ under the quotient $q \colon G \to G_1/N_1 \times G_2/N_2$ is the graph of an isomorphism $\phi \colon G_1/N_1 \to G_2/N_2$.
That is, 
\[
q(N) = \{(g,\phi(g)) \mid g \in G_1/N_1\}.
\]
\end{lemma}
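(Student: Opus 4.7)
The plan is to first identify $N_1 = N \cap \ker\pi_2$ with a subgroup of $G_1$ via $\pi_1$ (its elements have the form $(n, e_2)$, and $\pi_1$ embeds them faithfully into $G_1$) and, symmetrically, $N_2$ with a subgroup of $G_2$. The initial step is to verify that $N_1$ is normal in $G_1$. Given any $g \in G_1$, surjectivity of $\pi_1|_N$ yields some $h \in G_2$ with $(g, h) \in N$; for $(n, e_2) \in N_1$, the conjugate $(g, h)(n, e_2)(g, h)^{-1} = (g n g^{-1}, e_2)$ lies in $N$ (as $N$ is a subgroup) and has trivial second coordinate, hence lies in $N_1$. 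This gives $g N_1 g^{-1} \subseteq N_1$ for every $g \in G_1$, and normality of $N_2$ in $G_2$ follows by the same argument with the roles reversed.

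Next I would construct the isomorphism $\phi \colon G_1/N_1 \to G_2/N_2$. Given a coset $g N_1$, choose any $h \in G_2$ with $(g, h) \in N$ (such an $h$ exists by $\pi_1(N) = G_1$) and set $\phi(g N_1) \coloneqq h N_2$. The heart of the argument is well-definedness: suppose $(g, h), (g', h') \in N$ with $g N_1 = g' N_1$. Then $g^{-1} g' \in N_1$, so $(g^{-1} g', e_2) \in N$. Since $N$ is closed under multiplication and inverses, $(g, h)^{-1}(g', h') = (g^{-1} g', h^{-1} h') \in N$, and multiplying on the left by $(g^{-1} g', e_2)^{-1} \in N$ yields $(e_1, h^{-1} h') \in N \cap \ker\pi_1 = N_2$. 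Hence $h N_2 = h' N_2$, so $\phi$ is well-defined.

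The homomorphism property is immediate: if $(g, h), (g', h') \in N$, then $(gg', hh') \in N$, so $\phi(gg' N_1) = hh' N_2 = \phi(g N_1)\phi(g' N_1)$. Running the entire construction with the roles of $G_1$ and $G_2$ swapped produces a two-sided inverse, so $\phi$ is an isomorphism. Finally, for the graph description, note that by definition $q(N) = \{(g N_1, h N_2) \mid (g, h) \in N\}$; by the definition of $\phi$, the second coordinate is precisely $\phi(g N_1)$, and conversely every pair of the form $(g N_1, \phi(g N_1))$ arises from some $(g, h) \in N$. Therefore $q(N) = \{(g, \phi(g)) \mid g \in G_1/N_1\}$, as claimed.

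The main obstacle is the well-definedness step, which is the only place where all three hypotheses, namely the surjectivity of both projections and the precise definitions of $N_1$ and $N_2$ as intersections of $N$ with the two coordinate kernels, are used simultaneously. Once that is in hand, everything else is routine verification of the group and subgroup axioms.
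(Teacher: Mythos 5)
Your proof is correct and complete: you establish normality of $N_1$ and $N_2$ (needed for the quotients to exist), the well-definedness argument via $(e_1,h^{-1}h')\in N\cap\ker\pi_1=N_2$ is exactly the crucial step, and the symmetry argument for the inverse and the graph description are all sound. Note, however, that the paper does not prove this lemma at all --- it merely restates Goursat's Lemma with a citation to Hall --- so there is no proof in the paper to compare against; your argument is the standard textbook one and fills that gap correctly.
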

Using Goursat's Lemma, we can prove the following classification result for normal subgroups of finite products of non-abelian simple groups.

\begin{lemma}\label{LEM:factor}
Let $G=G_1\times\cdots\times G_n$ be a product of non-abelian simple groups and let $N\subset G$ be a normal subgroup.
Then $N$ is a product of $G_i$'s.
\end{lemma}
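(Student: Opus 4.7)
The plan is to induct on $n$. The base case $n=1$ is immediate: the only normal subgroups of the simple group $G_1$ are $\{1\}$ and $G_1$, which correspond to the empty and full product, respectively.

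For the inductive step, I would write $G = G' \times G_n$ where $G' = G_1 \times \cdots \times G_{n-1}$, and let $\pi_1, \pi_2$ be the associated projections. Since $\pi_2(N)$ is normal in the simple group $G_n$, either $\pi_2(N) = \{1\}$ or $\pi_2(N) = G_n$. If $\pi_2(N) = \{1\}$, then $N \subseteq G' \times \{1\}$ is normal in $G'$, so the inductive hypothesis applies directly. Otherwise, set $M := \pi_1(N)$; then $M$ is normal in $G'$, so by induction $M = \prod_{i \in I} G_i$ for some $I \subseteq \{1, \ldots, n-1\}$. Now apply Goursat's Lemma to $N$ viewed as a subgroup of $M \times G_n$, with $N_1 := N \cap \ker \pi_2$ and $N_2 := N \cap \ker \pi_1$: the image of $N$ in $M/N_1 \times G_n/N_2$ is the graph of an isomorphism $\phi \colon M/N_1 \to G_n/N_2$. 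The subgroup $N_2$ is normal in the simple group $G_n$, so $N_2 = \{1\}$ or $N_2 = G_n$.

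If $N_2 = G_n$, then $\{1\} \times G_n \subseteq N$, and combined with $\pi_1(N) = M$ one gets $N = M \times G_n = \prod_{i \in I \cup \{n\}} G_i$, as required. The main obstacle is ruling out the remaining case $N_2 = \{1\}$, which would realise $N$ as a diagonal-style subgroup. The key point here is that $N$ is normal not merely inside $M \times G_n$ but inside the full product $G$: conjugating a typical element $(m, \phi(mN_1)) \in N$ by $(h, 1)$ for arbitrary $h \in G'$ keeps it in $N$, and comparing second coordinates forces $\phi(hmh^{-1} N_1) = \phi(mN_1)$, so $[h,m] \in N_1$ for all $h \in G'$ and $m \in M$. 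Thus $[G', M] \subseteq N_1$. A straightforward componentwise computation, using that each $G_i$ with $i \in I$ is perfect (as a non-abelian simple group), then yields $[G', M] = M$, forcing $N_1 = M$ and $M/N_1$ trivial. This contradicts $\phi \colon M/N_1 \to G_n$ being an isomorphism onto the non-trivial group $G_n$, so the case $N_2 = \{1\}$ cannot occur and the induction closes.
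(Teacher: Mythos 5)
Your proof is correct and takes essentially the same route as the paper's: induction on the number of factors combined with Goursat's Lemma, using normality of $N$ in the \emph{full} product to conjugate the graph of $\phi$ by elements of one factor and force an abelianness contradiction. The only cosmetic differences are that you split off the last (simple) factor rather than the first, and you phrase the contradiction via perfectness of the simple factors (so $[G',M]=M$ forces $M=N_1$) where the paper instead shows $\phi(G_1)$ is central in $\widehat{G}/\widehat{K}$ and contradicts non-abelianness directly.
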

\begin{proof}
We proceed by induction on $n$.
The statement is trivially true if $n=1$.
Suppose the statement holds for $n-1$.
We identify the factors $G_i$ with elements of $G$ that have all coordinates equal to $1$ except
their $i$-th coordinate.
Write $G$ as $G_1\times(G_2\times\cdots\times G_n)=G_1\times \widehat{G}$.
By the inductive hypothesis, a normal subgroup of $\widehat{G}$ is a product of $G_i$'s for $i>1$.
If the projection of $N$ onto $\widehat{G}$ is not surjective, then $N$ is contained in a product of fewer than $n$ simple groups and we are done by induction.
By a similar argument, the projection of $N$ on the first coordinate is also surjective.
Let $K_1$ be the kernel of the projection of $N$ onto the \emph{second} coordinate.
It is a normal subgroup of $G_1$, hence it is either equal to $G_1$ or to $\{1\}$.
In the first case, $G_1$ is a Cartesian factor of $N$ and we are done by induction.

Now, suppose that $K_1=\{1\}$ and let $\widehat{K}$ be the kernel of the projection onto the \emph{first} coordinate.
By Goursat's lemma, the image of $N$ in $G_1\times \widehat{G}/\widehat{K}$ is the graph of an isomorphism $\phi\colon G_1 \to \widehat{G}/\widehat{K}$.
Since $N$ is a normal subgroup of $G_1\times \widehat{G}$, the graph $\{(g,\phi(g))\mid g\in G_1\}$
is normal in $G_1\times \widehat{G}/\widehat{K}$.
In particular, for all $h \in \widehat{G}/\widehat{K}$ and $g \in G_1$, $(1,h)(g,\phi(g))(1,h)^{-1} = (g',\phi(g'))$ for some $g' \in G_1$.
Then
\begin{align*}
(g',\phi(g')) 	& = (1,h)(g,\phi(g))(1,h)^{-1}\\
			 	& = (1,h)(g,\phi(g))(1,h^{-1})\\
				& = (g,h\phi(g)h^{-1}).
\end{align*}
So, $g'=g$.
Hence, for all $h \in \widehat{G}/\widehat{K}$ and $g \in G_1$, we have $\phi(g) = h\phi(g) h^{-1}$. So, $\phi(G_1)$ is in the center of $\widehat{G}/\widehat{K}$.
Since $\phi(G_1) = \widehat{G}/\widehat{K}$, this group is abelian, which contradicts our assumption that $G_1$ is non-abelian. 
\end{proof}

In particular, $G=G_1\times\cdots\times G_n$ has the remarkable property that if $N$ is a normal subgroup of $G$ and $M$ is a normal subgroup of $N$, then $M$ is normal in $G$.  

We now return to our shuffle groups.
It is clear from the definitions that $\Gamma_n\subset \Gamma_{n+1}$ and we have
an ascending chain of finite products of symmetric groups. 
Let $A_{n,a} \subset \Gamma_{n,a}$ be the alternating subgroup.
That is, $A_{n,a}$ is the subgroup $\{f_\alpha \mid \alpha \in \Sym(\rsub^n(a)), \alpha \mbox{ is even}\}$.
Let $A_n = \prod A_{n,a}$.
By construction, $A_n$ is a product of simple groups.
Finally, we define $A = \bigcup_{n \geq 1} A_n$ and call this subgroup the \emph{alternating shuffle group}.
While the group $H$ in Example \ref{EX:thompson} is not an alternating shuffle group, it is a subgroup of $A$.
Our goal will be to generalise that construction and to find an analogue for the group $H$ in general, which is an infinite simple subgroup of shuffles.

\begin{lemma}\label{LEM:chain}
Let $d$ be the cardinality of the alphabet $\mathcal{A}$.
The groups $A_n$ form an ascending chain and $A\subset\Gamma$ is a normal subgroup of index 
bounded above by $2^d$. 
\end{lemma}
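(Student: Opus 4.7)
The plan is to establish three things in sequence: (i) $A_n \subseteq A_{n+1}$, so the chain is ascending; (ii) $A \triangleleft \Gamma$; and (iii) $[\Gamma:A] \leq 2^d$. We already have the inclusion $\Gamma_n \subseteq \Gamma_{n+1}$ and the structural identity $\Gamma_n = \prod_{a \in \mc A}\Sym(\rsub^n(a))$ with $A_n$ the product of alternating subgroups, so $A_n \triangleleft \Gamma_n$ with quotient isomorphic to $(\Z/2\Z)^d$.

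The heart of the proof is (i). The inclusion $\Gamma_n \hookrightarrow \Gamma_{n+1}$ can be described concretely, in analogue to the diagonal embedding in Example \ref{EX:thompson}: if $\alpha = (\alpha_a)_{a \in \mc A}$ determines $f_\alpha \in \Gamma_n$, then the corresponding element $f_\beta \in \Gamma_{n+1}$ has components $\beta_b \in \Sym(\rsub^{n+1}(b))$ acting on $w \in \rsub^{n+1}(b)$ via its unique level-$n$ decomposition $w = w_1 \cdots w_k$ (with $w_i \in \rsub^n(c_i)$ and $c = c_1\cdots c_k \in \rsub(b)$) by $\beta_b(w) = \alpha_{c_1}(w_1) \cdots \alpha_{c_k}(w_k)$. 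Uniqueness of the level-$n$ decomposition is guaranteed by recognisability of $\rsub^n$, which follows from recognisability of $\rsub$. Under this decomposition we have the disjoint union $\rsub^{n+1}(b) = \bigsqcup_{c \in \rsub(b)} \prod_i \rsub^n(c_i)$, and $\beta_b$ acts on each component as the product permutation $\alpha_{c_1} \times \cdots \times \alpha_{c_k}$. Such a product has sign $\prod_i \operatorname{sgn}(\alpha_{c_i})^{\prod_{j \neq i}|\rsub^n(c_j)|}$; when each $\alpha_a$ is even, this sign is $+1$ on every component, so $\beta_b$ is even on $\rsub^{n+1}(b)$ for every $b$. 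Hence $A_n \subseteq A_{n+1}$.

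Claim (ii) then follows by directedness: given $g \in \Gamma$ and $h \in A$, choose $N$ with $g \in \Gamma_N$ and $h \in A_N$; since $A_N \triangleleft \Gamma_N$, the conjugate $ghg^{-1}$ lies in $A_N \subseteq A$. For (iii), the quotient map $\pi \colon \Gamma \to \Gamma/A$ restricted to $\Gamma_n$ factors through $\Gamma_n / (\Gamma_n \cap A)$, which is itself a quotient of $\Gamma_n/A_n \cong (\Z/2\Z)^d$, so $|\pi(\Gamma_n)| \leq 2^d$. The images $\pi(\Gamma_n)$ form a nested increasing sequence of finite subgroups of $\Gamma/A$, each of size at most $2^d$, whose union is all of $\Gamma/A$. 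A nested union of finite sets of bounded size is itself of that bounded size, giving $[\Gamma : A] \leq 2^d$.

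The main obstacle is step (i): verifying that the parity of the permutation is preserved under the passage from level $n$ to level $n+1$. Once the embedding $\Gamma_n \hookrightarrow \Gamma_{n+1}$ is described coordinate-wise via the unique level-$n$ decomposition of level-$(n+1)$ inflation words, the claim reduces to the standard sign formula for product permutations. The remaining two claims are then routine consequences of (i) together with the product structure of $\Gamma_n$.
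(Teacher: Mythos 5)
Your proof is correct, but the key step---showing $A_n \subseteq A_{n+1}$---is done by a genuinely different route than the paper's. You unpack the embedding $\Gamma_n \hookrightarrow \Gamma_{n+1}$ coordinate-wise, observe that the induced permutation of $\rsub^{n+1}(b)$ acts block-diagonally as a product permutation $\alpha_{c_1}\times\cdots\times\alpha_{c_k}$ on each block $\prod_i \rsub^n(c_i)$, and compute its sign via $\prod_i \operatorname{sgn}(\alpha_{c_i})^{\prod_{j\neq i}|\rsub^n(c_j)|}$. The paper avoids this computation entirely: it notes that $A_n$ is the commutator subgroup of $\Gamma_n$ (a finite product of symmetric groups), and that the composite $\Gamma_n \hookrightarrow \Gamma_{n+1} \to \Gamma_{n+1}/A_{n+1} \cong (\Z/2\Z)^d$ lands in an abelian group and hence kills the commutator subgroup; so $A_n \subseteq A_{n+1}$ for purely formal reasons, with no need to know what the inclusion actually does to a shuffle. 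The paper's argument is shorter and more robust---it works for any inclusion $\Gamma_n \leq \Gamma_{n+1}$---whereas yours yields concrete extra information (the exact sign of the induced permutation, hence also which odd shuffles stay odd) at the cost of relying on the disjoint-union decomposition $\rsub^{n+1}(b) = \bigsqcup_{c\in\rsub(b)}\prod_i\rsub^n(c_i)$, a word-level uniqueness statement that the paper only really establishes for occurrences inside bi-infinite sequences (though the paper implicitly leans on the same structure when asserting $\Gamma_n \leq \Gamma_{n+1}$ in the first place). Your remaining steps---normality of $A$ by directedness, and the index bound via $\Gamma/A = \bigcup_n \Gamma_n A/A$ with each $\Gamma_n/(\Gamma_n\cap A)$ a quotient of $\Gamma_n/A_n\cong(\Z/2\Z)^d$---coincide with the paper's; note only that the paper records the slightly sharper conclusion that $[\Gamma:A]$ divides $2^d$.
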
 
\begin{proof}
Consider the composition $q\circ i$ of the inclusion $i\colon\Gamma_n\to \Gamma_{n+1}$ and the quotient map $q\colon\Gamma_{n+1} \to \Gamma_{n+1}/A_{n+1}\cong (\Z/2\Z)^d$.
It maps $\Gamma_n$ to an abelian group.
Since $A_n$ is the commutator subgroup of $\Gamma_n$, it is in the kernel of $q\circ i$.
In other words, $A_n$ is a subgroup of $A_{n+1}$.
We have $\Gamma/A=\bigcup \Gamma_n A/A$, an increasing union of sets.
Since $\Gamma_n A/A\cong \Gamma_n/(\Gamma_n\cap A)$ and since $\Gamma_n/(\Gamma_n\cap A)$ is a quotient of $\Gamma_n/A_n$, its order divides $2^{d}$.
Therefore, $[\Gamma : A]$ divides $2^{d}$. 
\end{proof}

We note that in our considerations, how $\Gamma_n$ embeds into $\Gamma_{n+1}$ is unimportant, only that it is an embedding.
It is possible to define a chain $G_1\subset G_2\subset\cdots$ of products of symmetric groups such that each $G_n$ embeds in the alternating group of the first factor of $G_{n+1}$.
In such a case, $G = \bigcup G_n$ would be a union of an ascending chain of symmetric groups, which is equal to the union of the alternating subgroups $A$.
In other words, the index of $A$ would be equal to 1.

\begin{coro}
All non-trivial normal subgroups of the alternating shuffle group $A$ have infinite index.
Hence, $A$ is not residually finite.
\end{coro}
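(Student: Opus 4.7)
The plan is to prove the stronger statement that the only finite-index normal subgroup of the alternating shuffle group $A$ is $A$ itself. Both conclusions then follow at once: every proper normal subgroup has infinite index, and the intersection of all finite-index normal subgroups of $A$ equals $A$, which is non-trivial because branching ensures $|A_n| \to \infty$, so $A$ cannot be residually finite.

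The main ingredient is a uniform growth statement: $|\rsub^n(a)| \to \infty$ as $n \to \infty$, uniformly in $a \in \mc A$. By primitivity and compatibility, $M_\rsub$ is a primitive non-negative matrix, so $M_\rsub^{p_0}$ has strictly positive entries for some $p_0$. By compatibility, any $u \in \rsub^n(b)$ has abelianisation $\psi(u) = M_\rsub^n e_b$, so for $n \geq p_0$ every realisation of $\rsub^n(b)$ contains every letter. Branching (Proposition \ref{PROP:branching}) furnishes a letter $a_0$ and arbitrarily large values of $|\rsub^q(a_0)|$; since distinct independent choices within a realisation produce distinct concatenated words (break points are fixed by compatibility), one concludes $|\rsub^{p_0+q}(b)| \geq |\rsub^q(a_0)|$ for every $b \in \mc A$. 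Hence for any constant $C$ there exists $n_C$ such that for all $n \geq n_C$ and all $a \in \mc A$ we have $|\rsub^n(a)| \geq 5$ and $|A_{n,a}| > C$; in particular, each $A_n$ is a finite product of non-abelian simple groups with factors of order exceeding $C$.

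Now suppose $N \triangleleft A$ has finite index $k$. For $n \geq n_k$, the subgroup $N \cap A_n$ is normal in the product of non-abelian simple groups $A_n$, so Lemma \ref{LEM:factor} yields a subset $S_n \subseteq \mc A$ with $N \cap A_n = \prod_{a \in S_n} A_{n,a}$. Since $A = \bigcup_n A_n$ and $A/N$ is finite, coset representatives of $A/N$ lie in some $A_{n_0}$, so for all $n \geq \max\{n_0, n_k\}$ the restriction $A_n \to A/N$ is surjective, giving $|A_n|/|N \cap A_n| = k$. But this quotient equals $\prod_{a \notin S_n} |A_{n,a}|$, which equals $1$ when $S_n = \mc A$ and is at least $\min_a |A_{n,a}| > k$ otherwise. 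Only the former is consistent, so $A_n \subseteq N$ for all sufficiently large $n$, whence $A = \bigcup_n A_n \subseteq N$, i.e., $N = A$, as required.

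The main obstacle is the uniform growth step: branching is a condition on a single letter, and one must combine compatibility (to read off abelianisations from $M_\rsub$) with primitivity (to make $M_\rsub^{p_0}$ positive) in order to propagate the branching from $a_0$ to every letter simultaneously. Once this uniformity is established, the combinatorial rigidity of normal subgroups of finite products of non-abelian simple groups (Lemma \ref{LEM:factor}) leaves no room for proper finite-index normal subgroups of $A$.
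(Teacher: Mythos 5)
Your proof is correct and follows essentially the same route as the paper's: normal subgroups of $A_n$ are sub-products of the simple factors $A_{n,a}$ (Lemma \ref{LEM:factor}) whose orders grow without bound, which forces any finite-index normal subgroup to contain every $A_n$ and hence equal $A$. The only real difference is that you carefully justify the uniform growth $\#\rsub^n(a)\to\infty$ for all $a$ via primitivity and compatibility, a point the paper attributes to branching without spelling out.
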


\begin{proof}
Suppose $N\subseteq A$ is a non-trivial normal subgroup.
Then each $N\cap A_n$ is normal in $A_n$.
For large enough $n$, $N \cap A_n$ is eventually always non-trivial, as otherwise, every $N \cap A_{n}$ would be trivial (because $N \cap A_n \subseteq N \cap A_{n+1}$), contradicting $N \cap A$ being non-trivial.
Without loss of generality, assume that $A_{n,a}$ is simple for all $n \geq 1$ and $a \in \mc A$ (if not, by branching, replace $\rsub$ with a large enough power $\rsub^p$ so that $\#\rsub^p(a) \geq 5$ for all $a$).
Then, since $A_n$ is a product of simple groups, either $N \cap A_n$ is equal to $A_n$, or it has index at least as large as the smallest cardinality of one of its factors $A_{n,a}$ because $N \cap A_n$ is eventually non-trivial.
Since these cardinalities increase to $\infty$ with $n$, either $N$ is equal to $A$ or it has infinite index.
For a group to be residually finite, it must contain at least one non-trivial normal subgroup of finite index.
\end{proof}

\begin{theorem}\label{THM:alternating-shuffle-simple}
The alternating shuffle group $A$ contains an infinite simple subgroup.
\end{theorem}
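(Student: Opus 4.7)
The plan is to construct an infinite simple subgroup of $A$ as an ascending union of finite non-abelian simple alternating subgroups, generalising the construction of the subgroup $H$ in Example~\ref{EX:thompson}. First, by Proposition~\ref{PROP:branching} together with primitivity, fix a letter $a \in \mc A$ with $\#\rsub^n(a) \to \infty$ as $n \to \infty$; then for all sufficiently large $n$, the alternating group $A_{n,a}$ on at least five level-$n$ inflation words is non-abelian and simple.

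The target is an ascending chain $S_1 \leq S_2 \leq \cdots \leq A$ of finite non-abelian simple subgroups with $|S_n|\to\infty$. Once this is achieved, the union $S := \bigcup_n S_n$ is an infinite simple subgroup of $A$ by a standard argument: any non-trivial element of a normal subgroup $N \trianglelefteq S$ lies in some $S_n$, so $N \cap S_n$ is a non-trivial normal subgroup of $S_n$, hence equal to $S_n$ by simplicity. For any $m \geq n$ we then have $S_n \leq S_m \cap N$, so $S_m \cap N$ is a non-trivial normal subgroup of $S_m$, hence equal to $S_m$ again by simplicity. Therefore $N \geq S_m$ for all $m$, giving $N = S$; infinitude is automatic from $|S_n|\to\infty$.

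The crux, and the main obstacle, is the construction of the chain. The naïve candidate $S_n := A_{n,a}$ fails: by primitivity, for $n$ large, an element of $A_{n,a}$ acts non-trivially on $\rsub^{n+1}(b)$ for every letter $b$, so its image under $A_n \hookrightarrow A_{n+1}$ is not contained in the factor $A_{n+1,a}$, and in particular $A_{n,a} \not\leq A_{n+1,a}$ as subgroups of $A$. Following the template of Example~\ref{EX:thompson}, we define $S_n$ instead as a diagonal subgroup of $A_n$---the image of a homomorphism $\alpha \mapsto (\alpha_b)_{b \in \mc A}$ from a simple alternating group into $A_n = \prod_b A_{n,b}$. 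In Example~\ref{EX:thompson} the cyclic symmetry $\alpha \mapsto (\alpha, \bar\alpha, \bar{\bar\alpha})$ supplies this homomorphism directly; in the general setting, the off-diagonal components $\alpha_b$ for $b \neq a$ must instead be constructed from the substitution data, for instance by exploiting the injective projection homomorphisms $\pi_b \colon A_{n,a} \to A_{n+1,b}$ induced by the shuffle action one level higher. The key technical point---and what I expect to be the hardest step---is to choose these assignments compatibly across levels so that the natural inclusion $A_n \hookrightarrow A_{n+1}$ restricts to $S_n \hookrightarrow S_{n+1}$, with $S_n$ isomorphic to the alternating group on $\#\rsub^n(a)$ symbols.
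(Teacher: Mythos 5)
Your general strategy---build an ascending chain $S_1 \leq S_2 \leq \cdots$ of finite non-abelian simple subgroups of $A$ and take the union---is sound in principle, and your argument that such a union is an infinite simple group is correct. But the proof has a genuine gap exactly where you flag it: the chain is never constructed. You correctly observe that the na\"ive choice $S_n := A_{n,a}$ fails because the inclusion $A_n \hookrightarrow A_{n+1}$ spreads $A_{n,a}$ across all the factors $A_{n+1,b}$, and you propose to repair this with a diagonal subgroup $S_n = \{(\alpha_b)_{b}\} \leq \prod_b A_{n,b}$. In Example \ref{EX:thompson} the cyclic symmetry of the substitution supplies the components $\alpha_b$ and, crucially, guarantees that the inclusion $A_n \hookrightarrow A_{n+1}$ carries the level-$n$ diagonal into the level-$(n+1)$ diagonal. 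For a general branching substitution there is no such symmetry, and it is not clear that compatible choices exist at all: the image of $S_n$ in $A_{n+1}$ is a subgroup determined by how level-$n$ supertiles sit inside level-$(n+1)$ supertiles, and there is no reason it should lie inside a diagonally embedded copy of the alternating group on $\#\rsub^{n+1}(a)$ symbols. Without that compatibility, the induction does not close and the argument does not go through.

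The paper's proof avoids this construction entirely by arguing structurally about normal subgroups. Goursat's Lemma gives Lemma \ref{LEM:factor}: every normal subgroup of a finite product of non-abelian simple groups is a sub-product of the factors. Consequently $A$ has the property that normality is transitive ($M \trianglelefteq N \trianglelefteq A$ implies $M \trianglelefteq A$), and every strictly descending chain of normal subgroups of $A$ has length bounded in terms of the alphabet size. A minimal non-trivial normal subgroup $N$ of $A$ therefore exists; it has no proper non-trivial normal subgroups by the transitivity of normality, so it is simple, and it is infinite because $N \cap A_n$ is eventually a non-trivial product of factors whose cardinalities grow with $n$. This sidesteps precisely the compatibility problem your approach runs into (and note the resulting $N$ need not be an ascending union of simple groups in any obvious way, since each $N \cap A_n$ may be a product of several factors). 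To salvage your route you would need either extra symmetry hypotheses on $\rsub$, as in Example \ref{EX:thompson}, or an independent argument producing the compatible diagonal embeddings---which is the real content of the theorem.
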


\begin{proof}
First of all observe that $A$ inherits the remarkable property that if $N$ is normal in $A$ and $M$ is normal in $N$, then $M$ is normal in $A$.
By Lemma \ref{LEM:factor}, every descending chain of normal subgroups in $A_n$ has length at most $d+1$.
This implies that every descending chain of normal subgroups in $A$ has length at most $d+1$.
Let $N$ be the last non-trivial normal subgroup in such a chain.
It contains no proper normal subgroup, since every $M$ that is normal in $N$ is normal in $A$.
$N$ is a simple group.  

That $N$ is infinite follows from the fact that $\rsub$ is branching and so $\#\rsub^n(a) \to \infty$ as $n \to \infty$ for each $a \in \mc A$. Therefore, for each $a$, $\#A_{n,a} \to \infty$.
\end{proof}

Hence, the shuffle group $\Gamma$ and the full automorphism group $\Aut(X_\rsub)$ contain an infinite simple subgroup and are not residually finite.

While the shuffle group is a large subgroup of the automorphism group, there are many more varied automorphisms for RS-subshifts.
By generalising the shuffle procedure slightly; allowing shuffles to depend on local information (in analogue to finite block codes), we get a more complete picture of how the automorphism group looks.
This is what we explore in the next section.

\section{Automorphism groups for recognisable substitutions}\label{SEC:recog}
\begin{definition}
We say that the letter $a \in \mc A$ \emph{appears with uniformly bounded gaps} in a subshift $X$ if the set $\{aua \triangleleft x, x \in X \mid u \in (\mc A\setminus\{a\})^\ast\}$ is finite.
\end{definition}
Equivalently, the letter $a \in \mc A$ appears with uniformly bounded gaps in $X$ if and only if there exists a uniform bound $m$ such that if $|u|=m$ and $u \triangleleft x$ for $x \in X$ then $|u|_a \geq 1$.
\begin{lemma}\label{LEM:uni-bound-gaps}
Let $\rsub$ be a primitive, compatible random substitution.
For every $a$ in $\mc A$, the letter $a$ appears infinitely often in every element in $X_\rsub$ and with uniformly bounded gaps.
\end{lemma}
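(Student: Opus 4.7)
The plan is to exhibit a uniform $N > 0$ such that every word in $\mc L^N_\rsub$ contains every letter of $\mc A$; both the bounded-gap assertion and infinite occurrence in every $x \in X_\rsub$ then follow immediately. The key input is that compatibility together with primitivity forces every realisation of some sufficiently high fixed power $\rsub^p$ to contain every letter of the alphabet.

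First, I would lift compatibility to higher powers of $\rsub$ by induction on $p$: every $u \in \rsub^p(a)$ has the common abelianisation $\psi(u) = M_\rsub^p\,\psi(a)$. Indeed, a realisation of $\rsub^{p+1}(a)$ has the form $\rsub(u_1)\cdots\rsub(u_m)$ for some $u = u_1 \cdots u_m \in \rsub^p(a)$; by compatibility each $\psi(\rsub(u_i))$ depends only on the letter $u_i$, so $\psi(\rsub(u))$ depends only on $\psi(u)$, which is constant on $\rsub^p(a)$ by the inductive hypothesis. Choosing $p$ so that $M_\rsub^p$ has strictly positive entries (available by primitivity, via the equivalence recorded after Definition 2.3 in the excerpt), every realisation $u \in \rsub^p(b)$ then satisfies $|u|_a \geq 1$ for all $a, b \in \mc A$.

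Set $L_{\max} = \max_{b \in \mc A} \max_{u \in \rsub^p(b)} |u|$, which is finite because each $\rsub^p(b)$ is a finite set. The claim is that any $w \in \mc L_\rsub$ with $|w| \geq 2L_{\max}$ contains every letter. Since $w$ is legal, $w \blacktriangleleft \rsub^s(c)$ for some $c$ and $s \geq 0$; by primitivity we may assume $s \geq p$, since any realisation of $\rsub^s(c)$ appears as a block of some realisation of $\rsub^{s+r}(c')$ whenever $c$ occurs in some realisation of $\rsub^r(c')$. Writing $\rsub^s = \rsub^p \circ \rsub^{s-p}$, any realisation $V \in \rsub^s(c)$ decomposes as $V = V_1 V_2 \cdots V_k$ with each $V_i$ a realisation of $\rsub^p$ on a single letter and $|V_i| \leq L_{\max}$. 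A short pigeonhole argument on block boundaries shows that a subword $w \triangleleft V$ of length at least $2L_{\max}$ must fully contain at least one block $V_l$: the block $V_l$ meeting the starting position of $w$ ends no further than $L_{\max}$ positions later, and the next block $V_{l+1}$ then has its right endpoint no further than $2L_{\max}-1$ positions after the start of $w$, hence inside $w$. By the previous paragraph, $V_{l+1}$ contains every letter of $\mc A$.

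Consequently, every factor of length at least $2L_{\max}$ of every $x \in X_\rsub$ contains every letter, giving uniformly bounded gaps and, a fortiori, infinite occurrence of every letter in both directions. The only subtle point is the inductive lifting of compatibility to higher powers of $\rsub$; everything else reduces to routine pigeonhole combinatorics on concatenations of bounded-length blocks.
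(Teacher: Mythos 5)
Your proof is correct, and its engine is the same as the paper's: find a power $p$ with $M_\rsub^p$ strictly positive, lift compatibility to that power so that every realisation of $\rsub^p(b)$ contains every letter, and convert the bounded lengths of level-$p$ inflation words into a uniform gap bound. Where you genuinely differ is in how you access the inflation-word decomposition. The paper works with a bi-infinite point $x \in X_\rsub$ and invokes the Rust--Spindeler result that $x$ has a preimage $y$ with $\sigma^{-i}(x) \in \rsub(y)$, so that $x$ is an exact concatenation of words $\rsub(y_j)$, and the gap bound falls out block by block. You instead stay entirely at the level of finite legal words: by definition $w \blacktriangleleft \rsub^s(c)$, you push $s$ up to at least $p$ via primitivity, and a pigeonhole on block boundaries shows that any legal $w$ with $|w| \geq 2L_{\max}$ swallows a whole level-$p$ block. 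Your route is more self-contained --- it needs no appeal to the existence of global preimages, and you supply the inductive verification that $\psi(u) = M_\rsub^p\,\psi(a)$ for all $u \in \rsub^p(a)$, which the paper absorbs into its ``without loss of generality'' reduction to a positive matrix --- and it delivers the slightly stronger, purely language-level conclusion that every sufficiently long legal word contains every letter. The paper's version is shorter because it leans on the preimage existence result it needs elsewhere anyway.
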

\begin{proof}
By primitivity, we may suppose without loss of generality that every entry of $M_\rsub$ is positive.
Let $ {\displaystyle m = \max_{a \in \mc A}\{|\rsub(a)|\}}$.
Let $a\in \mc A$ be some fixed letter and let $x \in X_\rsub$ be an arbitrary element in the RS-subshift.
By results in \cite{RS:random}, there exists an element $y \in X_\rsub$ such that $\sigma^{-i}(x) \in \rsub(y)$ for some $ 0 \leq i \leq |\rsub(y_0)|$.
It follows that $x$ can be decomposed into an exact concatenation of words of the form $\rsub(y_j)$.
As $M_\rsub$ is positive, every such word contains the letter $a$, and so $x$ contains infinitely many appearances of the letter $a$ and each appearance of $a$ is at most a distance $2m$ apart.
As $m$ depends only on $M_\rsub$, it is a uniform bound for all $x$, as required.
\end{proof}
\begin{remark}
We should mention that the above Lemma is perfectly compatible with the fact that $X_\rsub$ is, in general, not uniformly recurrent.
All that Lemma \ref{LEM:uni-bound-gaps} gives us is uniformly bounded returns to single letters, rather than words.
In fact, for suitably nice RS-subshifts, there must exist a word $u$ that has arbitrarily large gaps between occurrences in some elements of the subshift.
This will be proved later and used when studying the constant length case in Section \ref{SEC:const-length}.
\end{remark}


\begin{theorem}\label{THM:auto-full-shift}
Let $\rsub$ be a non-degenerate, compatible, primitive, recognisable random substitution.
Let $X_n$ be the full shift on $n$ letters.
The automorphism group $\Aut(X_{n})$ embeds into the automorphism group $\Aut(X_\rsub)$.
\end{theorem}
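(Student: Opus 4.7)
The plan is to construct an embedding $\Aut(X_n) \hookrightarrow \Aut(X_\rsub)$ for $n := |\rsub^p(a)| \geq 2$ (possible by branching, Proposition \ref{PROP:branching}), and then invoke the classical embedding $\Aut(X_2) \hookrightarrow \Aut(X_n)$ for full shifts. After enumerating $\rsub^p(a) = \{u_0, u_1, \ldots, u_{n-1}\}$, the key observation is that by recognisability each $x \in X_\rsub$ decomposes uniquely into level-$p$ inflation words, and by Lemma \ref{LEM:uni-bound-gaps} applied to $\rsub^p$ the positions at which type-$a$ level-$p$ inflation words occur have uniformly bounded gaps. This lets us associate to each $x$ a bi-infinite sequence $b(x) \in \{0, \ldots, n-1\}^{\Z}$, where $b(x)_k$ is the index $i$ such that the inflation word at the $k$-th type-$a$ position equals $u_i$.

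Given $\phi \in \Aut(X_n)$ with local rule $F \colon \{0, \ldots, n-1\}^{2r+1} \to \{0, \ldots, n-1\}$, I would define $\hat\phi \colon X_\rsub \to X_\rsub$ as follows: for each type-$a$ level-$p$ inflation word position $s$ in the decomposition of $x$, let $s_{-r} < \cdots < s_0 = s < \cdots < s_r$ be the $2r+1$ closest type-$a$ positions and replace the inflation word at $s$ by $u_{F(b_{-r}, \ldots, b_r)}$, where $b_k$ is the index of the inflation word at $s_k$; leave all other inflation words unchanged. Continuity of $\hat\phi$ follows because Lemma \ref{LEM:uni-bound-gaps} makes the window $[s_{-r}, s_r]$ bounded uniformly in $s$ and $x$, so $\hat\phi$ is described by a block code of bounded radius on the inflation word decomposition (which is itself locally computable by recognisability). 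Shift-equivariance is immediate from the purely relative formulation, and invertibility from the identity $\widehat{\phi^{-1}} \circ \hat\phi = \widehat{\phi^{-1} \circ \phi} = \mathrm{id}$, which holds because the induced transformation on bit sequences is precisely $\phi$. Hence $\hat\phi \in \Aut(X_\rsub)$ and $\phi \mapsto \hat\phi$ is a group homomorphism.

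For injectivity, every $\beta \in X_n$ is realised as the bit sequence of some $x \in X_\rsub$: starting from any $y \in X_\rsub$ (which contains type-$a$ positions with uniformly bounded gaps by primitivity and Lemma \ref{LEM:uni-bound-gaps}), choose the inflation word $u_{\beta_k}$ at the $k$-th type-$a$ position of $y$ and fill in arbitrary admissible inflation words at all other positions, producing $x \in \rsub^p(y) \subset X_\rsub$ with bit sequence $\beta$. Hence $\hat\phi = \mathrm{id}$ forces $\phi(\beta) = \beta$ for all $\beta \in X_n$, i.e.\ $\phi = \mathrm{id}$. This yields $\Aut(X_n) \hookrightarrow \Aut(X_\rsub)$; composing with the standard embedding $\Aut(X_2) \hookrightarrow \Aut(X_n)$ for mixing full shifts (see \cite{BLR:auto-sft, KR:automorphisms}) gives the theorem.

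The principal technical challenge is achieving shift-equivariance and continuity simultaneously; these follow cleanly only once the local rule is phrased relative to the closest $2r+1$ type-$a$ sites (a shift-invariant notion) rather than any absolute enumeration, at which point Lemma \ref{LEM:uni-bound-gaps} discharges both verifications. A subsidiary but essential point is the realisability of arbitrary $n$-ary bit sequences, which relies on the genuine independence of inflation word choices across different positions guaranteed by the definition of a random substitution. The construction can be viewed as a systematic source of generalised shuffles in the sense of Example \ref{EX:shuffle}, extending the shuffle group $\Gamma$ by local-rule dependencies on the inflation word environment.
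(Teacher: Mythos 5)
Your proposal is correct and follows essentially the same route as the paper: both reduce to embedding $\Aut(X_n)$ for $n=\#\rsub^p(a)\geq 2$ via the classical full-shift embeddings, use recognisability to read off a unique coding sequence in $\{1,\dots,n\}^\Z$ from the type-$a$ inflation word slots (with gaps uniformly bounded by Lemma \ref{LEM:uni-bound-gaps}, making the induced map a bounded-radius generalised shuffle), and verify injectivity by realising arbitrary coding sequences through independent choices of inflation words. Your explicit phrasing via the local rule $F$ of $\phi$ is just an unpacking of the paper's ``apply $\alpha$ to $\omega$'' description, so there is no substantive difference.
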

\begin{proof}
It is well-known that for every $n,m \geq 2$ the automorphism group $\Aut(X_n)$ embeds into the automorphism group $\Aut(X_m)$ \cite{KR:automorphisms}, and so it is enough to show that for at least one $n$, $\Aut(X_n)$ embeds into $X_\rsub$.
Let $a \in \mc A$ be any letter in the alphabet.
Without loss of generality, by Proposition \ref{PROP:branching}, suppose that $\#\rsub(a) \geq 2$ and let $n = \#\rsub(a)$ (if not, take a higher power $\rsub^k$, for which all of the assumptions on $\rsub$ still hold).
Write $\rsub(a) = \{u_{1} \ldots, u_{n}\}$.
For every $x \in X_\rsub$, by recognisability, we may write $x$ uniquely as a concatenation of words
\[
\cdots \square \rsub(a) \square \rsub(a) \square \rsub(a) \square \rsub(a) \square \rsub(a) \square \rsub(a) \square \cdots,
\]
where $\square$ is used to represent some collection of inflation words not of type $a$, that is, $\square$ is a word of the form $\rsub(w)$ with $w \in \mc (\mc A\setminus\{a\})^\ast$.
Our arguments will all be compatible with the action under the shift, and so it is not necessary for us to identify the origin of $x$.
Note that the possible realisations of the $\square$s are uniformly bounded in length by Lemma \ref{LEM:uni-bound-gaps}.
That is, there exists some $M \geq 1$ such that for all realisations of $\square$, $|\square| \leq M$.

So, there exists a unique bi-infinite sequence $ \omega \in \{1, \ldots, n\}^\Z$  (up to a shift) such that
\[
x = \cdots \square u_{\omega(-2)}\square u_{\omega(-1)} \square u_{\omega(0)} \square u_{\omega(1)} \square u_{\omega(2)} \square \cdots.
\]
Let $\alpha$ be an automorphism of $X_{n}$.
We define a map $f_\alpha \colon X_\rsub \to X_\rsub$ by 
\[
f_\alpha(x) = \cdots \square u_{(\alpha\omega)(-2)}\square u_{(\alpha\omega)(-1)} \square u_{(\alpha\omega)(0)} \square u_{(\alpha\omega)(1)} \square u_{(\alpha)\omega(2)} \square \cdots.
\]
To see that $f_\alpha(x) \in X_\rsub$, note that, by recognisability, there exists a unique $y \in X_\rsub$ and a unique $0 \leq k \leq |\rsub(y_0)|-1$ such that $\sigma^{-k}(x) \in \rsub(y)$.
This unique preimage $y$ is also a preimage of $f_\alpha(x)$, by construction of $f_\alpha(x)$, as the placement of inflation words and their types has not changed, only their realisations.
That is, $\sigma^{-k}(f_\alpha(x)) \in \rsub(y)$.
As $\rsub(y) \subset X_\rsub$, it follows that $f_\alpha(x) \in X_\rsub$.

The map $f_\alpha$ is an example of a generalised shuffle as described in Remark \ref{REM:gen-shuff}, whose radius is uniformly bounded in length.
This follows from the fact that $\alpha$ is a sliding block code, all realisations of $\square$ are uniformly bounded in length, and the length $|\rsub(a)|$ is fixed for all realisations.
As such $f_\alpha$ is itself a sliding block code (again, due to local recognisability), and so is a shift-commuting continuous function on $X_\rsub$.

It is not difficult to see that $f_\alpha \circ f_{\alpha^{-1}} = f_{\alpha^{-1}} \circ f_\alpha = \operatorname{Id}_{X_\rsub}$, and so $f_\alpha \in \Aut(X_\rsub)$.
Further, it is clear that $f_{\alpha \circ \beta} = f_\alpha \circ f_\beta$ and so the map $\Aut(X_{n}) \to \Aut(X_\rsub) \colon \alpha \mapsto f_\alpha$ is a group homomorphism.

As $f_\alpha(x)$ is derived from $x$ by replacing inflation words with other inflation words of the same type, $f_{\alpha}(x) = x$ if and only if $u_{\alpha\omega(i)} = u_{\omega(i)}$ for every $i$, hence $\alpha\omega = \omega$.
If $\alpha$ is non-trivial, then there exists some $\hat{\omega} \in \{1, \ldots, n\}^\Z$ such that $\alpha\hat{\omega} \neq \hat{\omega}$, and so
\[
f_\alpha ( \cdots \square u_{\hat{\omega}(-1)} \square u_{\hat{\omega}(0)} \square u_{\hat{\omega}(1)} \square \cdots) \neq  \cdots \square u_{\hat{\omega}(-1)} \square u_{\hat{\omega}(0)} \square u_{\hat{\omega}(1)} \square \cdots.
\]
It follows that $f_\alpha$ is not the identity, and so the map $\Aut(X_{n}) \to \Aut(X_\rsub) \colon \alpha \mapsto f_\alpha$ is an embedding.
\end{proof}
As an immediate corollary, any subgroup of $\Aut(X_{2})$ embeds into $\Aut(X_\rsub)$, and so by results of Boyle, Lind and Rudolph \cite{BLR:auto-sft}, we can also identify the following subgroups of $\Aut(X_\rsub)$.
\begin{coro}
Let $\rsub$ be a non-degenerate, compatible, primitive, recognisable random substitution.
The following groups embed into the automorphism group $\Aut(X_\rsub)$:
\begin{itemize}
\item $\Aut(X_n)$, $n \geq 2$,
\item every countable direct sum of finite groups,
\item Any finite free product $(\Z/2\Z)^{\ast n} = \Z/2\Z \ast \cdots \ast \Z/2\Z$, hence the free group $F_2$ on $2$ generators, hence the free group $F_n$ on $n \geq 1$ generators, and the free group $F_\omega$ on countably many generators,
\item the countable direct sum ${\displaystyle \Z^\omega = \bigoplus_{i=1}^{\infty} \Z}$.
\end{itemize}
Moreover, $\Aut(X_\rsub)$ contains all of the other subgroups mentioned in the discussion in Section \ref{SUBSEC:autos}.
\end{coro}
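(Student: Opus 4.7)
The plan is to deduce each item from Theorem \ref{THM:auto-full-shift} by composing with an already-known embedding into $\Aut(X_2)$. Since Theorem \ref{THM:auto-full-shift} provides an embedding $\Aut(X_2)\hookrightarrow \Aut(X_\rsub)$, it suffices to exhibit each of the listed groups as a subgroup of $\Aut(X_2)$; all of these are established in the literature because $X_2$ is a mixing shift of finite type, and a subgroup of a subgroup is a subgroup.

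For the first bullet, the embedding $\Aut(X_n)\hookrightarrow \Aut(X_\rsub)$ for $n\geq 2$, I would invoke Kim and Roush's result \cite{KR:automorphisms} that $\Aut(X_n)\hookrightarrow\Aut(X_m)$ for all $n,m\geq 2$; taking $m=2$ and composing with Theorem \ref{THM:auto-full-shift} gives the result. This fact was already used inside the proof of that theorem, so no additional input is needed.

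For the remaining three bullets, each group---countable direct sums of finite groups, the finite free products $(\Z/2\Z)^{\ast n}$ (from which the free groups $F_n$ and $F_\omega$ follow by standard embeddings, e.g.\ $F_2\hookrightarrow (\Z/2\Z)^{\ast 3}$ and $F_\omega \hookrightarrow F_2$), and the countable direct sum $\Z^\omega$---was explicitly constructed as a subgroup of $\Aut(X)$ for any mixing SFT $X$ by Boyle, Lind and Rudolph \cite{BLR:auto-sft}. Specialising to $X=X_2$ and composing with the embedding from Theorem \ref{THM:auto-full-shift} transports each inclusion into $\Aut(X_\rsub)$. The final ``moreover'' sentence is handled by exactly the same composition, appealing additionally to the results of Kim and Roush on residually finite countable unions of finite groups and right-angled Artin groups, together with Salo's results on cograph and general countable graph products \cite{S:closed-free-prod,S:graph-prod}.

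There is no real obstacle here: the corollary is a formal consequence of Theorem \ref{THM:auto-full-shift} together with the body of literature on automorphism groups of mixing SFTs. The only points to verify are that every cited result yields an embedding in the correct direction and applies to the mixing SFT $X_2$, both of which are immediate.
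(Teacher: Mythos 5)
Your proposal is correct and is essentially identical to the paper's argument: the corollary is stated there as an immediate consequence of Theorem \ref{THM:auto-full-shift}, obtained by composing the embedding $\Aut(X_2)\hookrightarrow\Aut(X_\rsub)$ with the known realisations of each listed group inside $\Aut(X_2)$ from \cite{BLR:auto-sft,KR:automorphisms,S:closed-free-prod,S:graph-prod}. No further comment is needed.
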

Given the above, a natural question is whether there are subgroups of $\Aut(X_\rsub)$ which do not appear as subgroups of the group $\Aut(X_2)$.
In fact, this follows from Theorem \ref{THM:alternating-shuffle-simple}, as the alternating shuffle group $A$ contains an infinite simple group. $\Aut(X_2)$ is residually finite (by Proposition \ref{PROP:periodic}), and so $A$ cannot appear as a subgroup of $\Aut(X_2)$.
It follows that $\Aut(X_\rsub)$ is in a very concrete sense `larger' than $\Aut(X_2)$.

\section{Local recognisability}\label{SEC:local-recog}
Gohlke, Rust and Spindeler introduced the notion of an \emph{inflation word decomposition} for random substitutions \cite{GRS:random-sft}.
This definition is key to defining a local version of recognisability, as is done for deterministic substitutions.
This is also a useful framework in which to define what it means for a single word in the language to be recognisable.
This notion will be used in Section \ref{SEC:const-length} to relax recognisability.

\begin{definition}
Let $\vartheta$ be a random substitution and let $u\in \mathcal{L}_{\vartheta}$ be a legal word.
Let $n\geq 1$ be a natural number. 
For words $u_i \in \mathcal{A}^+$, the tuple $[u_1,\ldots,u_{\ell}]$ is called a \emph{$\vartheta$-cutting} of $u$ if $u_1\cdots u_{\ell}=u$ and there exists a $\vartheta$-legal word $v=v_1\cdots v_{\ell}$ such that 
\begin{itemize}
    \item For $i=2,\ldots, \ell-1$, $u_i$ is an inflation word built from the letter $v_i$.
    That is, $u_i \in \vartheta(v_i)$. Note that $u_i$ is a word, while $v_i$ is a single letter. 
    \item $u_1$ is the suffix of an inflation word built from $v_1$, and 
    \item $u_\ell$ is the prefix of an inflation word built from $v_\ell$. 
\end{itemize}
That is, $u$ is contained in a realisation of $\vartheta(v)$, which is a concatenation of 
inflation words, with each of the interior $u_i$'s being one of those inflation words. 

We call $v$ a \textit{root} of the $\vartheta$-cutting and we call $\left([u_1,\ldots,u_{\ell}],v\right)$ the corresponding \emph{inflation word decomposition} of $u$. If 
$u$ actually is a realisation of $\vartheta(v)$ for some $v$, so $u_1\in \vartheta(v_1)$ and 
$u_\ell \in \vartheta(v_\ell)$, then we say that $u$ is an {\em exact} inflation word.
Finally, we let $D_{\vartheta}(u)$ denote the set of all decompositions of the word $u$.
\end{definition}

\begin{example}
Let $\vartheta \colon a\mapsto \{ab, ba\}, b\mapsto \{aa\}$ be the random period doubling substitution.
The word $aab$ has two possible $\vartheta$-cuttings $[a,ab]$ and $[aa,b]$ with two and one distinct roots respectively.
The set of inflation word decompositions of $aab$ is 
\[
D_\vartheta(aab)=\left\{\left([a,ab], aa\right), \left([a,ab], ba\right), \left([aa,b],ba\right)\right\}.
\]
For this random substitution, $abba$ is an exact inflation word.
However, $bb$ is not an exact inflation word, since any concatenation of inflation words containing $bb$ must also contain some $a$s.
\end{example}

Let $u$ be a $\vartheta$-legal word and let $u_{[i,j]}$
be a subword. An inflation word decomposition of
$u$ restricts to an inflation word decomposition
of $u_{[i,j]}$, which we call an {\em induced inflation word decomposition}. The idea is simple, but the precise definition is somewhat
technical: 
\begin{definition}
Let $\vartheta$ be a random substitution and let $d = ([u_1, \ldots, u_\ell], v)\in D_{\vartheta}(u)$ be an inflation word decomposition of $u$. 
For $0\leq i \leq j \leq |u|-1$, we write $d_{[i,j]}$ for the \emph{induced inflation word decomposition on the subword} $u_{[i,j]}$, defined by
$$d_{[i,j]}=\left([u_1, \ldots, u_\ell], v\right)_{[i,j]}=\left([\hat{u}_{k(i)},u_{k(i)+1},\ldots,u_{k(j)-1},\hat{u}_{k(j)}], v_{[k(i),k(j)]}\right),$$ 
where $1 \leq k(i) \leq k(j) \leq \ell$ are natural numbers such that
$$\left|u_1\cdots u_{k(i)-1}\right| < i+1 \leq \left|u_1\cdots u_{k(i)}\right| \text{ and } \left|u_1 \cdots u_{k(j)-1}\right|< j+1 \leq \left|u_1 \cdots u_{k(j)}\right|,$$
$\hat{u}_{k(i)}$ is a suffix of $u_{k(i)}$ and $\hat{u}_{k(j)}$ is a prefix of $u_{k(j)}$ such that
$$\hat{u}_{k(i)}u_{k(i)+1}\cdots u_{k(j)-1}\hat{u}_{k(j)}=u_{[i,j]}.$$

\end{definition}

\begin{example}\label{eg:induceddecom}
Let $\vartheta \colon a\mapsto \{ab, ba\}, b\mapsto \{aa\}$ be the random period doubling substitution.
The legal word $u=ababa \in \mathcal{L}_\vartheta$ has exactly four inflation word decompositions given by 
\[
D_\vartheta(u) = \big\{ ([a,ba,ba],aaa), ([a,ba,ba],baa), ([ab,ab,a],aaa), ([ab,ab,a],aab)\big\}.
\]
For the subword $u_{[1,3]}=bab$ of $u$, the first two elements of $D_\vartheta(u)$ yield the induced decomposition 
$d^{(1)}_{[1,3]}=([ba,b],aa)$, while the next two elements yield $d^{(2)}_{[1,3]}=([b,ab],aa)$. 
Thus, $\#\left\{d_{[1,3]} \mid d\in D_\vartheta(u)\right\} = 2$. 
In this example, both possible $\vartheta$-cuttings of $bab$ are induced from cuttings of $u$. 

By contrast, the word $u' = bbaba$ has a unique inflation word decomposition $d' = \left( [b,ba,ba], aaa \right)$ which yields a unique induced inflation word decomposition on the subword $u'_{[1,3]} = bab$ given by $d'_{[1,3]} = \left([ba,b],aa \right)$.
That is, when $bab$ sits inside $u$, the embedding tells us nothing about the $\vartheta$-cutting of $bab$, but when $bab$ sits inside $u'$, the embedding uniquely defines the $\vartheta$-cutting of $bab$.
\end{example}

\begin{definition}
Let $\vartheta$ be a random substitution and let $u\in \mathcal{L}_\vartheta$ be a legal word. 
We say that $u$ is \emph{recognisable} if there exists a natural number $N$ such that for each legal word of the form $w=u^{(l)}uu^{(r)}$ with $\left|u^{(l)}\right|=\left|u^{(r)}\right|=N$, all inflation word decompositions of $w$ induce the same inflation word decomposition of $u$. That is, knowing
the $N$ letters to the left of $u$ and the $N$ letters
to the right of $u$ determines a unique induced inflation
word decomposition of $u$. 
We call the minimum such $N$ the \emph{radius of recognisability} for $u$.
\end{definition}

\begin{definition}
We call a random substitution $\vartheta$ \emph{locally recognisable} if there exists a natural number $N$ such that every $\vartheta$-legal word is recognisable with radius at most $N$.  
The minimum such $N$ is called the \textit{radius of recognisability for $\vartheta$}.
\end{definition}

Intuitively, this means that there is a `window' of radius $2N+1$ such that if we place the window somewhere in a word, the induced inflation word decomposition of the central letter in the window is uniquely determined.
Then, for a long enough word $u$, as we slide the window along the word, all but the very extremal inflation words that make up $u$ are determined.
That is, we know how to uniquely decompose $u$ into inflation words except possibly at the very ends of the word.
Without loss of rigour, we will normally work with this intuitive idea of local recognisability rather than the explicit definition, as the notation can become cumbersome. 

Equivalence of (global) recognisability and local recognisability is the next result.
\begin{prop}\label{PROP:local-global}
A compatible random substitution $\rsub$ is recognisable if and only if it is locally recognisable.
\end{prop}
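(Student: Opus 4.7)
The plan is to prove the two implications separately. The direction local recognisability $\Rightarrow$ global recognisability is a sliding-window argument, while the converse is a compactness argument extracting a bi-infinite witness.

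For local $\Rightarrow$ global, I would assume $\rsub$ is locally recognisable with uniform radius $N$, fix $x \in X_\rsub$, and suppose two pairs $(y, k)$, $(y', k')$ both satisfy $\sigma^{-k}(x) \in \rsub(y)$ and $\sigma^{-k'}(x) \in \rsub(y')$. For each $M \geq 0$, set $u_M \coloneqq x_{[-M,M]}$ and $w_M \coloneqq x_{[-M-N, M+N]}$. The two global decompositions restrict to finite inflation word decompositions of $w_M$ whose roots are the relevant subwords of $y$ and $y'$, and by local recognisability applied to $u_M$, both restrictions induce the same inflation word decomposition of $u_M$. Since the induced decomposition records all cut positions strictly inside $u_M$ together with the letter labels of every inflation word meeting $u_M$, the two global decompositions agree on all cuts and labels inside $[-M, M]$. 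Letting $M \to \infty$ forces $(y, k) = (y', k')$.

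For global $\Rightarrow$ local, I would argue by contradiction. If local recognisability fails, then for every $N \geq 1$ there is a legal word $u_N$, an extension $w_N = u_N^{(l)} u_N u_N^{(r)}$ with $|u_N^{(l)}| = |u_N^{(r)}| = N$, and two decompositions $d_N^{(1)}, d_N^{(2)} \in D_\rsub(w_N)$ whose restrictions to $u_N$ differ. I would pick a position $p_N \in u_N$ and a fixed radius $r$ (depending only on $\rsub$, for instance the maximum inflation word length) such that the induced decompositions of $d_N^{(1)}$ and $d_N^{(2)}$ on the window of radius $r$ around $p_N$ already disagree; this is possible because any mismatch between two inflation word decompositions is witnessed either at a cut or at a letter label within bounded distance. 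Re-centring so that $p_N = 0$ puts $w_N$ in an interval containing $[-N, N]$. By Tychonoff compactness, pass to a subsequence along which $w_N \to x$ pointwise in $\mathcal{A}^\Z$; legality of every finite subword of $x$ gives $x \in X_\rsub$. Since the set of inflation word decompositions of any fixed window $x_{[-M, M]}$ is finite, a diagonal extraction produces a further subsequence along which $d_N^{(1)}|_{[-M, M]}$ and $d_N^{(2)}|_{[-M, M]}$ stabilise for every $M$. The stabilised decompositions are mutually consistent and glue to global inflation word decompositions $d^{(1)}, d^{(2)}$ of $x$, equivalently to pairs $(y^{(i)}, k^{(i)})$ with $\sigma^{-k^{(i)}}(x) \in \rsub(y^{(i)})$. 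The disagreement on the radius-$r$ window around position $0$ survives in the limit, so $(y^{(1)}, k^{(1)}) \neq (y^{(2)}, k^{(2)})$, contradicting global recognisability.

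The main obstacle I expect is making the compactness step rigorous. One has to verify that the stabilised finite inflation word decompositions on nested windows really assemble into a \emph{global} inflation word decomposition, and not merely into a consistent family of local ones; this should follow from the fact that the limit root sequence lies in $X_\rsub$, which in turn reduces to legality of each of its finite subwords. A second, related subtlety is that a single-letter induced decomposition does not always detect a disagreement (two different cut patterns may agree on which letter label covers position $p_N$ while placing nearby cuts differently), which is why the witness $p_N$ must come paired with a fixed radius $r$ on which the disagreement is visible; this fixed $r$ is what is then carried through the compactness step to produce the contradicting limiting pair.
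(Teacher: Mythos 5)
Your proposal is correct and follows essentially the same route as the paper: the local-to-global direction is the same sliding-window uniqueness argument on nested central words $x_{[-M,M]}$ padded by the radius $N$, and the global-to-local direction is the same contrapositive via compactness, with your fixed witness radius $r$ playing the role of the paper's normalisation to $|u|=2$ and your diagonal extraction making explicit what the paper compresses into ``compactness for the $\rsub$-preimages''. The only omission is that recognisability also requires \emph{existence} of a preimage pair $(y,k)$, which your first direction takes for granted; the paper covers this by citing the general existence result of Rust and Spindeler.
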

\begin{proof}
($\implies$)
We prove the contrapositive.
Suppose that $\rsub$ is not locally recognisable.
Then for every $N \geq 0$, there exists a word $v_N = v$ of the form $v = u^{(l)}uu^{(r)}$ such that $|u^{(l)}| = |u^{(r)}| = N$ and such that the induced inflation word decomposition of $u$ is not unique.
Note that we can assume $|u|=2$ without loss of generality.
Extend $v_N$ arbitrarily to a bi-infinite element $x_N \in X_\rsub$.
By compactness, the sequence $(x_N)_{N \geq 0}$ has an accumulation point $x \in X_\rsub$.
Let $n \in \N$.
We have $x_{[-n,n]} = (x_N)_{[-n,n]}$ for some $N \in \N$ and hence there are two inflation word decompositions of $x_{[-n,n]}$ that differ at the origin.
Sending $n \to \infty$ and using compactness for the $\rsub$-preimages, we find two distinct preimages of $x$ in $X_\rsub$.
It follows that $\rsub$ is not recognisable.

($\impliedby$)
Let $x \in X_\rsub$ be an arbitrary element.
Let $N$ be the radius of recognisability for $\rsub$.
Existence of a preimage $y$ for $x$ under $\rsub$ is already known from work of Rust and Spindeler \cite{RS:random}.
That is, there exists an element $y \in X_\rsub$ and $0 \leq k \leq |\rsub(y_0)|-1$ such that $\sigma^{-k}(x) \in \rsub(y)$.
It remains to show uniqueness.
Let $n \geq 0$ be an arbitrary natural number and consider the word $u \coloneqq x_{[-n,n]}$.
We note that $u$ is a subword of the word $u' = x_{[-n-N,n+N]}$ and by local recognisability, all inflation word decompositions of $u'$ must induce the same inflation word decomposition on $u$.
So any preimage $y$ must coincide with the root of this inflation word decomposition for $u$ on some central subword  $v = y_{[-n',n']}$, where $n'$ is approximately $n' \approx n/|\rsub|$ up to a constant, where $|\rsub| = \max\{|\rsub(a)| \mid a \in \mc A\}$.
As $n$ was arbitrary and $\frac{n}{|\rsub|} \to \infty$, it follows that arbitrarily large central subwords of $y$ are determined by $x$.
Hence, $y$ is unique.
As $x$ was arbitrary, $\rsub$ is recognisable.
\end{proof}
The definition of local recognisability used here was a delicate notion to formalise, as the choice of images of letters under random substitutions means that one must be more careful than in the deterministic setting in order to capture the correct meaning (which is relatively simple as far as intuition is concerned), especially as one really wants the definition to be equivalent to global recognisability.
Both the definition of local recognisability and the above proof were pieced together from those appearing in \cite{F:book} in the context of deterministic substitutions and conversations with Philipp Gohlke, to whom we are extremely grateful.

\section{Automorphism groups for constant length random substitutions}\label{SEC:const-length}
Recognisability is used heavily in Section \ref{SEC:recog} in order to ensure that the automorphisms being constructed are well-defined.
In general, we do not have recognisability and so shuffles and generalised shuffles are no longer well-defined.
This includes important examples for which one is interested in studying the associated automorphism groups, such as the random Fibonacci \cite{GL:random,N:fibonacci-entropy} and random period doubling substitutions \cite{BSS:rand-diffraction}.
The random period doubling substitution is especially of interest as the associated RS-subshift contains periodic points\footnote{This was first observed by Rust and Spindeler \cite{RS:random} and investigated in more detail by Rust \cite{R:random-periodic}.}, unlike for RS-subshifts associated with recognisable substitutions.
Rust and Spindeler showed that primitive random substitutions that contain a periodic point actually contain a dense set of periodic points \cite{RS:random}, so by Proposition \ref{PROP:periodic}, the automorphism group of the random period doubling subshift is residually finite.
The existence of periodic points in the RS-subshift therefore forces the automorphism group to be `smaller' in this sense (compare Theorem \ref{THM:alternating-shuffle-simple}).
In general, we have the following.
\begin{coro}\label{COR:periodic-res-finite}
Let $\rsub$ be a primitive random substitution.
If $X_\rsub$ contains a single periodic element, then $\Aut(X_\rsub)$ is residually finite.
\end{coro}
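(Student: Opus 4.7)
The plan is to deduce this corollary immediately from the two ingredients already in hand: the density result of Rust and Spindeler for periodic points in primitive random substitution subshifts, and Proposition \ref{PROP:periodic}, which asserts residual finiteness of $\Aut(X)$ whenever $\Per(X)$ is dense in $X$.

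First, I would invoke the Rust--Spindeler dichotomy for primitive random substitutions: either $X_\rsub$ contains no periodic elements at all, or the set $\Per(X_\rsub)$ is dense in $X_\rsub$. This was already recalled in the paragraph preceding the statement. Since the hypothesis guarantees the existence of at least one periodic point in $X_\rsub$, we are in the second case, and thus $\Per(X_\rsub)$ is dense.

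With density of periodic points established, Proposition \ref{PROP:periodic} applies verbatim to $X = X_\rsub$, yielding that $\Aut(X_\rsub)$ is residually finite. The argument of that proposition proceeds by noting that restriction to $\Per(X_\rsub)$ gives an injective homomorphism $\Aut(X_\rsub) \to \Aut(\Per(X_\rsub))$, and then separating points of $\Aut(\Per(X_\rsub))$ using the finite-orbit-by-finite-orbit restriction maps $\Aut(\Per(X_\rsub)) \to \Aut(P(n))$ onto automorphism groups of the (finite) sets of period-$n$ points.

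There is no real obstacle here — the corollary is a direct application. The only minor point worth flagging is that one should cite the Rust--Spindeler density statement explicitly (as done in the surrounding text via \cite{RS:random}) so that the reader can verify that a single periodic point suffices to trigger density, rather than merely non-emptiness of $\Per(X_\rsub)$ being a strictly stronger hypothesis.
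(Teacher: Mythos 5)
Your proposal is correct and matches the paper's own argument exactly: the corollary is deduced by combining the Rust--Spindeler result that a primitive random substitution subshift with one periodic point has dense periodic points with Proposition \ref{PROP:periodic}, precisely as stated in the paragraph preceding the corollary. Nothing is missing.
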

For a discussion of the existence/non-existence and enumeration of periodic points for compatible random substitutions, the reader is directed to the work of Rust \cite{R:random-periodic}.

In order to study properties such as subgroups for automorphism groups of RS-subshifts without recognisability, it is helpful to introduce other restrictions on our random substitutions.
Our aim is to introduce conditions that are satisfied by the random period doubling substitution, so that consequent results apply to this example.

To motivate this direction, let us construct a non-trivial automorphism of the random period doubling substitution.
\begin{example}
Recall that the random period doubling substitution is given on the alphabet $\mc A = \{a,b\}$ by $\rsub \colon a \mapsto \{ab,ba\}, b \mapsto \{aa\}$.
This is a primitive, compatible random substitution of constant length $L=2$, which is not recognisable, but for which the word $bb$ is recognisable with radius of recognisability $0$, admitting the unique inflation word decomposition $([b,b],(aa))$ (see Example \ref{EX:period-doubling-recog}).

We can use the word $bb$ as a marker between which inflation words may be replaced with other inflation words of the same type.
For instance, the following is the list of all words of length $8$ in the language $\mc L_\rsub$ that both begin and end with $bb$:
\[
u_1 = bbaaaabb,\:\: u_2 = bbaababb,\:\: u_3 = bbabaabb
\]
Because $bb$ has a unique inflation word decomposition, it actually means that these three words $u_i$, $i=1,2,3$ also have unique inflation word decompositions.
Notice that $u_2$ and $u_3$ have the same root.
That is $u_2$ and $u_3$ are uniquely decomposed as 
\[
u_2 = \underbracket{-b}_{a}\underbracket{ba}_{a}\underbracket{ab}_{a}\underbracket{ab}_{a}\underbracket{b-}_{a},\:\: u_3 = \underbracket{-b}_{a}\underbracket{ba}_{a}\underbracket{ba}_{a}\underbracket{ab}_{a}\underbracket{b-}_{a}.
\]
We may therefore define an automorphism of $X_\rsub$ in the following way.
The automorphism $f \colon X_\rsub \to X_\rsub$ replaces any occurrence of the word $u_2$ with the word $u_3$ and vice versa.
This is well-defined because if $u_2$ and $u_3$ were ever to overlap in an element of the subshift\footnote{In fact, this cannot happen because $a^9$ is not legal for $\rsub$.}, then the rule defining $f$ can still act on both words, as the place where they overlap, the word $bb$, is fixed by the rule.
In this sense, $bb$ is a \emph{marker}.
As $u_2$ and $u_3$ have the same unique root, $f$ is well-defined on $X_\rsub$.
By now, it is also clear that $f$ really is an automorphism and is its own inverse.
There are plenty of elements in the subshift for which $u_2$ and $u_3$ appear as subwords, and so it is clear that $f$ is non-trivial.
We hence see that $f$ is a non-trivial involution in $\Aut(X_\rsub)$.

Notice that it was necessary to define our automorphism in such a way that a nearby word ($bb$) was recognisable, in order to make the mapping well-defined.
If we would try to define an automorphism that, for example, just replaced the word $ab$ with $ba$ and vice versa, this would not be well-defined on all $x \in X_\rsub$ due to the lack of recognisability for $\rsub$ (and several other reasons).

The longer we make our words $u_i$ (which contain $bb$ on the ends but not in between), the more choice we have for applying similar inflation word replacement, such as the automorphism that performs the cyclic replacement
\begin{center}
\begin{tikzcd}
bba ab aa ab ab b \arrow[r] & bba ba aa ab ab b \arrow[d]\\
bba ab aa ab ab b \arrow[u] & bba ba aa ab ab b \arrow[l]
\end{tikzcd}
\end{center}
on words that share the unique root $aaabaaa$.
One just has to ensure that no new occurrences of $bb$ are being created between the two `border' $bb$s.
\end{example}
The above mentioned properties will form the basis for a general argument and so we now take the time to formalise them, in order to understand better how to apply them in a general setting.

\subsection{Relaxing recognisability}
If there exists a recognisable word $u \in \mc L_\rsub$, then we say that $\rsub$ \emph{admits a recognisable word}.
We say that the random substitution $\rsub$ \emph{has disjoint images of letters} if, for all $a, b \in \mc A$, $\rsub(a) \cap \rsub(b) \neq \varnothing \implies a = b$.
We say that the random substitution $\rsub$ \emph{has disjoint images} if, for all legal words $u, v \in \mc L_\rsub$, $\rsub(u) \cap \rsub(v) \neq \varnothing \implies u = v$.
Although recognisability neither implies nor is implied by the disjoint images property, they are intuitively similar, as they both restrict how legal words may be `desubstituted'.
On the other hand, the existence of a recognisable word is a direct relaxation of recognisability.

A Recognisable word may always be extended to a minimal recognisable word that constitutes an exact inflation word.
This is done by picking a long enough superword $w \in \mc L_\rsub$ of $u$ that induces a unique inflation word decomposition $([u_{(0)}, \ldots, u_{(\ell)}],v) \in D_\rsub(u)$ and extending $u$ within $w$ to $u' = u'_{(0)} u_{(1)} \cdots u_{(\ell - 1)} u'_{(\ell)} \in \rsub(v)$, which is also recognisable by construction.
We may therefore always assume that a recognisable word is an exact inflation word when it is convenient to do so and that each of the decomposition words $u_{(0)}, \ldots, u_{(\ell)}$  for the recognisable word $u$ are images of the random substitution on a single letter, so $u \in \vartheta(v)$ for some $v \in \mc L_\vartheta$.

It has previously been shown that a constant length random substitution has disjoint images of letters if and only if it has disjoint images \cite{R:random-periodic}.
In particular, a constant length random substitution $\rsub$ has disjoint images of letters if and only if all of its powers $\rsub^p$ also have disjoint images of letters.
In the case of non-constant length substitutions, disjoint images can be strictly stronger than disjoint images of letters, as demonstrated by the random Fibonacci substitution $\rsub \colon a \mapsto \{ab,ba\}, b \mapsto \{a\}$, which has disjoint images of letters.
Nevertheless, we see that $\rsub$ does not have disjoint images because $\rsub(ab) \cap \rsub(ba) = \{aba\}$.

The following result says that the existence of a recognisable word in an element $x$ for a constant length random substitution enforces unique cutting points (boundaries of inflation words) in the whole of $x$.
This result will be used later to ensure that an automorphism is well-defined.
Notice that $y$ is not necessarily unique in the following result.
\begin{lemma}\label{LEM:const-length-recog}
Let $\rsub$ be a substitution of constant length $L$ which admits one recognisable word $u$.
If $x \in X_\rsub$ contains at least one appearance of the word $u$ as a subword, then there exists a unique $0 \leq i \leq L-1$ such that $\sigma^{-i}(x) \in \rsub(y)$ for some $y \in X_\rsub$.
\end{lemma}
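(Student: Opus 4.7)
The proof separates into existence and uniqueness of the offset $i$. Existence of a substitutive preimage $(y,i)$ of $x$ is already part of the standard theory of random substitution subshifts (as used in the proof of Proposition~\ref{PROP:local-global}), so the real content of the lemma is uniqueness of $i$.

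For uniqueness, suppose that $\sigma^{-i}(x) \in \rsub(y)$ and $\sigma^{-i'}(x) \in \rsub(y')$ for some $y, y' \in X_\rsub$ and $0 \leq i, i' \leq L-1$. Each relation gives an inflation word decomposition of $x$, and because $\rsub$ has constant length $L$, the set of positions in $\Z$ at which an inflation word begins in each decomposition is an arithmetic progression of common difference $L$. Concretely, these cut sets are $\{nL - i : n \in \Z\}$ and $\{nL - i' : n \in \Z\}$ (for a suitable normalisation); they coincide if $i = i'$ and are otherwise disjoint. So it suffices to produce a single point lying in both cut sets.

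The appearance of $u$ does the work. Let $u$ sit at positions $[m, m + |u| - 1]$ in $x$ and let $N$ be the radius of recognisability of $u$. The window $w = x_{[m-N,\, m+|u|-1+N]}$ is legal, and the two global decompositions of $x$ restrict to two inflation word decompositions of $w$, each inducing an inflation word decomposition of $u$. By recognisability of $u$, these two induced decompositions coincide. In particular, the cut points inside $u$ and the lengths of the boundary pieces $\hat{u}_1, \hat{u}_\ell$ agree between the two decompositions. In the constant-length setting this is enough: any interior cut of $u$ is immediately a common element of the two cut sets, and if $|u| < L$ so that no interior cut exists, then the length of $\hat{u}_1$ alone records the offset of the cut immediately preceding $u$ and so is a common element of the cut sets. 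Either way the cut sets share a point, and hence $i = i'$.

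The main obstacle I anticipate is phrasing the edge case $|u| < L$ cleanly, since the induced decomposition of $u$ is then almost trivial. A clean workaround is to replace $u$ by its enlargement $u^\sharp := x_{[m-L,\, m+|u|-1+L]}$ in $x$: this is a legal superword of $u$ of length at least $L$, it inherits recognisability from $u$ (the $N$-context of $u$ inside any legal appearance of $u^\sharp$ is still available, so the radius of recognisability drops by at most $L$), and it necessarily contains an interior cut in any inflation word decomposition. Running the argument with $u^\sharp$ in place of $u$ then produces an interior cut point lying in both cut sets and forces $i = i'$.
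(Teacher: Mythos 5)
Your argument is correct and is essentially the paper's proof: both hinge on the observation that, by recognisability, every inflation word decomposition of $x$ induces the same decomposition on $u$, and constant length then forces every global cutting to agree with the offset that this unique induced decomposition pins down (the paper phrases this by normalising $u$ to start at position $0$ and concluding $i=0$, rather than by intersecting two arithmetic progressions, but the mechanism is identical). The only substantive difference is your handling of the degenerate case in which $u$ carries no interior cut: you enlarge $u$ inside $x$ by $L$ letters on each side, whereas the paper invokes its standing convention (set up in the discussion at the start of Section~\ref{SEC:const-length}) that a recognisable word may be assumed without loss of generality to be an exact inflation word $u \in \rsub(\hat{v})$ --- the two devices accomplish the same thing.
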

\begin{proof}
Without loss of generality, assume that $u \in \vartheta(\hat{v})$ for some $\hat{v} \in \mc L_\vartheta$.
Suppose that the radius of recognisability for $u$ is $N$.
Let $i\leq j$ be such that $x_{[i,j]} = u$.
Suppose that the unique inflation word decomposition of $u$ induced by any inflation word decomposition of $x_{[i-N,j+N]}$ is given by $([u_{(0)}, \ldots, u_{(\ell)}], v)$.
By shifting $x$, we may assume without loss of generality that $x_{[0, |u|-1]} = u$.

We know that $x$ has at least one preimage $y$ under the substitution.
As $\rsub$ is constant length, say with length $L$, and the subword $x_{[0, |u|-1]}$ has a unique decomposition into inflation words which is compatible with an inflation word decomposition of the whole of $x$, it means that $x \in \rsub(y)$, with no allowed shifting of $x$.
Or more succinctly, $i = 0$, and so is unique.
\end{proof}
If we further assume that $\rsub$ has disjoint images of letters, then not only is $i$ unique, but so is the preimage $y$.
So not only do we enforce the cutting point structure in $x$, but also the types of the inflation words.
\begin{prop}\label{PROP:const-length-recog}
Let $\rsub$ be a substitution of constant length $L$ with disjoint images of letters and which admits one recognisable word $u$.
If $x \in X_\rsub$ contains at least one appearance of the word $u$ as a subword, then there exists a unique $y \in X_\rsub$ and $0 \leq i \leq L-1$ such that $\sigma^{-i}(x) \in \rsub(y)$.
\end{prop}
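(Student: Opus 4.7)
The plan is to deduce this statement by leveraging Lemma \ref{LEM:const-length-recog} for everything except uniqueness of $y$, and then to extract uniqueness of $y$ from the disjoint images of letters hypothesis. Concretely, the lemma already gives existence of some $y \in X_\rsub$ and uniqueness of the offset $0 \leq i \leq L-1$ with $\sigma^{-i}(x) \in \rsub(y)$, so the only new work is to promote this existential preimage to a unique preimage.

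For the uniqueness step, I would start from two candidate preimages $y, y' \in X_\rsub$, both satisfying $\sigma^{-i}(x) \in \rsub(y) \cap \rsub(y')$ for the unique offset $i$ supplied by Lemma \ref{LEM:const-length-recog}. Because $\rsub$ has constant length $L$, membership of $\sigma^{-i}(x)$ in $\rsub(y)$ forces the partition of $\sigma^{-i}(x)$ into inflation words to occur exactly at positions $jL$ for $j \in \Z$, with the $j$-th block lying in $\rsub(y_j)$; the same statement holds verbatim for $y'$. Setting
\[
w_j \coloneqq (\sigma^{-i}(x))_{[jL,\,(j+1)L-1]},
\]
we therefore obtain $w_j \in \rsub(y_j) \cap \rsub(y'_j)$ for every $j \in \Z$, and the disjoint images of letters hypothesis then forces $y_j = y'_j$ for each $j$, giving $y = y'$ as required.

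There is no substantive obstacle here: the key observation is simply that constant length synchronises the cutting points of the two candidate decompositions of $\sigma^{-i}(x)$, after which disjoint images of letters reads off the preimage letter by letter. The proposition is therefore essentially a clean combination of Lemma \ref{LEM:const-length-recog} (for $i$ and existence) and the disjoint images of letters property (for uniqueness of $y$), and the hardest bookkeeping is just remembering that the inflation word cutting points of $\sigma^{-i}(x)$ are rigidly pinned to the lattice $L\Z$ in the constant length setting.
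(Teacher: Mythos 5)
Your proof is correct and follows essentially the same route as the paper's: invoke Lemma \ref{LEM:const-length-recog} for existence and for the uniqueness of the offset $i$, note that constant length pins the inflation word cuts of $\sigma^{-i}(x)$ to the lattice $L\Z$, and then use disjoint images of letters to read off each preimage letter $y_j$ uniquely from the block $(\sigma^{-i}(x))_{[jL,(j+1)L-1]}$. No gaps.
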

\begin{proof}
That $i$ is unique follows from Lemma \ref{LEM:const-length-recog}, and so without loss of generality, assume that $i=0$.
In particular, every subword of $x$ of the form $x_{[iL, (i+1)L-1]}$ is an exact inflation of a single letter.
That is, $x_{[iL, (i+1)L-1]} \in \rsub(y_i)$, where $y$ is some preimage of $x$ under $\rsub$.
But, as $\rsub$ has disjoint images of letters, we can conclude that the word $x_{[iL, (i+1)L-1]}$ uniquely determines the letter $y_i$.
It follows that $y$ is uniquely determined by $x$, and so both the element $y$ and the integer $i$ are unique.
\end{proof}
Hence, for constant length substitutions with disjoint images of letters, a suitable abundance of recognisable words actually gives recognisability of the random substitution.
\begin{coro}\label{CORO:const-length-recog}
Let $\rsub$ be a substitution of constant length with disjoint images of letters and suppose that every element in the RS-subshift $X_\rsub$ contains at least one recognisable subword.
Then $\rsub$ is recognisable.
\end{coro}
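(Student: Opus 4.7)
The plan is to derive this directly from Proposition \ref{PROP:const-length-recog} by a routine unpacking of definitions. Recall that recognisability (Definition \ref{DEF:recog}) asks that for every $x \in X_\rsub$ there exists a \emph{unique} $y \in X_\rsub$ and a \emph{unique} $0 \le k \le |\rsub(y_0)| - 1$ with $\sigma^{-k}(x) \in \rsub(y)$. Since $\rsub$ is of constant length, say length $L$, we have $|\rsub(y_0)| = L$ regardless of $y_0$, so this becomes: unique $y \in X_\rsub$ and unique $0 \le k \le L-1$ with $\sigma^{-k}(x) \in \rsub(y)$.

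Given this, my proposed proof is to simply fix an arbitrary $x \in X_\rsub$ and invoke the hypothesis that $x$ contains at least one recognisable subword $u_x \in \mc L_\rsub$. Apply Proposition \ref{PROP:const-length-recog} to this $x$ using the recognisable word $u_x$: the hypotheses of the proposition (constant length, disjoint images of letters, and $\rsub$ admitting at least one recognisable word, namely $u_x$, which appears as a subword of $x$) are all satisfied. The conclusion of the proposition is exactly the existence of a unique $y \in X_\rsub$ and unique $0 \le i \le L-1$ with $\sigma^{-i}(x) \in \rsub(y)$. Since $x$ was arbitrary, $\rsub$ is recognisable.

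The one point worth articulating, though it is essentially a triviality, is that although Proposition \ref{PROP:const-length-recog} is phrased as if a single recognisable word $u$ is chosen globally, its conclusion (uniqueness of $y$ and $i$) refers only to $x$, not to $u$. Therefore, the choice of which recognisable subword of $x$ to use is immaterial: any such choice produces the same $y$ and $i$, so there is no ambiguity when different elements $x \in X_\rsub$ contain different recognisable subwords. Hence no obstacle arises and the corollary follows immediately.
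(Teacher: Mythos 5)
Your proof is correct and matches the paper's intent exactly: the corollary is stated without proof precisely because it follows by applying Proposition \ref{PROP:const-length-recog} pointwise, choosing for each $x$ a recognisable subword it contains, and your observation that the conclusion depends only on $x$ and not on the chosen word is the right (if minor) point to make explicit.
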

\begin{example}\label{EX:period-doubling-recog}
Let $\rsub \colon a \mapsto \{ab, ba\}, b \mapsto \{aa\}$ be the random period doubling substitution.
It is a constant length random substitution with length $L = 2$.
The word $aa$ is not recognisable for $\rsub$ but showing this is not so simple.
Note that $aa$ has five inflation word decompositions
\begin{eqnarray*}
D_\rsub(aa) & = & \{([aa],b), ([a,a],bb),\\
& &([a,a],ba), ([a,a],ab), ([a,a],aa)\}
\end{eqnarray*}
and for any $n \geq 0$, the word $u_n = (aabaab)^n aa (baabaa)^n$ is legal for $\vartheta$.
We then notice that $u_n$ admits the inflation word decompositions
\[
([\overbrace{aa, ba, ab}^{\times 2n}, aa], (baa)^{2n}b ) \quad \text{ and } \quad ([a, \overbrace{ab, aa, ba}^{\times 2n}, a], a(aba)^{2n}a ),
\]
which induce different inflation word decompositions on the central word $aa$ (even different $\vartheta$-cuttings).
In particular, they induce the inflation word decompositions $([aa], b)$ and $([a,a], aa)$ respectively. Hence, no radius of recognisability exists for $aa$.

On the other hand, the word $bb$ is recognisable, as the only inflation word decomposition for $bb$ (regardless of how it is induced) is
\[
D_\rsub(bb) = \{([b,b],aa)\}.
\]
Although $\rsub$ admits a recognisable word $bb$ (and in fact infinitely many recognisable words), it is not true that $\rsub$ is recognisable, as $bb$ does not appear as a subword of every element in $X_\rsub$, as demonstrated by the element $(aab)^\infty \in X_\rsub$.
In fact, by Corollary \ref{CORO:const-length-recog}, not every element of $X_\rsub$ can contain a recognisable word as a subword, since the existence of a periodic element in the subshift rules out recognisability of $\rsub$ by results from \cite{R:random-periodic}.
\end{example}

We will need to also control the existence of recognisable words for powers of substitutions.
\begin{lemma}\label{LEM:recog-word}
Let $\rsub$ be a primitive, constant length random substitution with disjoint images of letters and let $p \geq 1$ be an integer.
The random substitution $\rsub$ admits a recognisable word if and only if $\rsub^p$ admits a recognisable word.
\end{lemma}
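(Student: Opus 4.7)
The plan is to handle the two directions separately, leveraging the structural consequences of constant length with disjoint images of letters established in Proposition~\ref{PROP:const-length-recog}.

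For the forward direction, assume $\rsub$ admits a recognisable word $u$; without loss of generality, assume $u$ is an exact level-$1$ inflation word, so $u \in \rsub(v)$ for some $v$. I will construct, by induction on $p$, a word $u^{(p)}$ recognisable for $\rsub^p$, with base case $u^{(1)} = u$. For the inductive step, suppose $u^{(p)}$ is recognisable for $\rsub^p$ with radius $N_p$. Using primitivity, pick a legal word $\tilde{W}$ containing both $u^{(p)}$ and $v$ in its interior, with padding of at least $N_p$ letters around $u^{(p)}$ and enough padding around $v$ that an appropriate realisation of $\rsub(\tilde{W})$ has $u$ at the $v$-position flanked on either side by at least $N$ letters, where $N$ is the $\rsub$-recognisability radius of $u$. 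Let $u^{(p+1)}$ be such a realisation, where at the $v$-position we specifically select the copy $u \in \rsub(v)$. Given any context of appropriate radius $N_{p+1}$ (a concrete function of $N_p$, $N$, and $L$) around $u^{(p+1)}$, recognisability of $u$ for $\rsub$ determines the level-$1$ inflation word decomposition of $u^{(p+1)}$ near the interior copy of $u$; constant length propagates the cut alignment throughout $u^{(p+1)}$, and disjoint images of letters (via Proposition~\ref{PROP:const-length-recog}) determines the level-$1$ root letters, recovering $\tilde{W}$. Then $\rsub^p$-recognisability of $u^{(p)}$ applied inside $\tilde{W}$ determines the level-$p$ decomposition of $\tilde{W}$, and composing yields the level-$(p+1)$ decomposition of $u^{(p+1)}$, as required.

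For the reverse direction, assume $u$ is recognisable for $\rsub^p$ with radius $N'$; I claim the same word $u$ is recognisable for $\rsub$. The key observation is that every $\rsub$-inflation word decomposition $D$ of a sufficiently padded context $w = w^{(l)} u w^{(r)}$ lifts to a $\rsub^p$-inflation word decomposition $\tilde{D}$ of $w$ whose ungrouping recovers $D$. Indeed, the root of $D$ extends to a bi-infinite $\tilde{z} \in X_\rsub$ by the Rust–Spindeler existence of preimages \cite{RS:random}, and $\tilde{z}$ in turn admits a $\rsub^{p-1}$-preimage by the same result applied to the primitive random substitution $\rsub^{p-1}$; combining these builds $\tilde{D}$ by grouping consecutive level-$1$ inflation words into level-$p$ inflation words with types read off from the iterated preimage. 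Conversely, by constant length and disjoint images of letters (which is preserved under powers), each level-$p$ inflation word decomposes uniquely into $L^{p-1}$ level-$1$ inflation words whose preimage letters are determined, so the ungrouping operation is canonical; and applied to $\tilde{D}$ it returns exactly $D$. Since $\rsub^p$-recognisability forces all lifts $\tilde{D}$ (arising from any $D$) to induce the same $\rsub^p$-decomposition on $u$, their ungroupings all yield the same $\rsub$-decomposition on $u$, which shows that all $\rsub$-decompositions $D$ of $w$ induce the same $\rsub$-decomposition on $u$.

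The main obstacle is the forward direction, specifically the bookkeeping of padding sizes in the inductive construction. One needs to verify that $N_{p+1}$ is finite and controllable, and that $v$ (for spelling out $u$ with the correct padding in $u^{(p+1)}$) and $u^{(p)}$ (for applying $\rsub^p$-recognisability inside $\tilde{W}$) can be simultaneously accommodated with their respective interior paddings inside a single legal word $\tilde{W}$ — which follows from primitivity but requires explicit choice of sufficiently long $\tilde{W}$. Boundary inflation words at the ends of $u^{(p+1)}$ and $w$ introduce minor technicalities, but since both arguments concern only the decomposition induced on an interior subword, these boundary effects do not affect the conclusion.
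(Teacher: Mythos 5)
Your proof is correct and follows essentially the same route as the paper's: the forward direction is the same induction (substitute a legal word containing both the root $v$ of $u$ and the previous recognisable word, realise the $v$-part as $u$, then let constant length fix the cuts and disjoint images fix the types), and the reverse direction is the same unique-refinement argument from level-$p$ to level-$1$ decompositions. The only difference is presentational: you make explicit the lifting of an arbitrary level-$1$ decomposition to a level-$p$ one via the Rust--Spindeler existence of preimages, a step the paper leaves implicit.
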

\begin{proof}
($\implies$)
Write $L$ for the length of the substitution, so for all $a \in \mc A$, $|\vartheta(a)| = L$.
Let $u$ be a recognisable word for $\rsub$ with radius of recognisability $N$ and let $\hat{u} = u^{(l)}uu^{(r)}$ be a legal of extension of $u$ with $|u^{(l)}| = |u^{(r)}| = N$.
Suppose that all inflation word decompositions of $\hat{u}$ induce the unique inflation word decomposition $([u_{(0)}, \ldots, u_{(\ell)}], v)$ on $u$ and without loss of generality assume that $u \in \rsub(v)$.
By primitivity, there exists a legal word $w \in \mc L_\rsub$ that contains both $\hat{u}$ and $v$.
Consider now a word $w' \in \rsub(w)$ which is given by realising the subword $\rsub(v)$ to be $u$.
As $u \triangleleft w'$, $u$ is recognisable for $\rsub$ and $\rsub$ is constant length, this forces the possible $\rsub$-cuttings of $w'$ to be fixed as soon as we know an $N$-neighbourhood of $w'$.
Disjoint images of letters means that in fact $w'$ can then only have a single inflation word decomposition induced by any inflation word decomposition of an $N$-neighbourhood.
Hence, $w'$ is recognisable for $\rsub$ with radius of recognisability $N$.
Now, as $w$ contains $\hat{u}$ and $\rsub$ is constant length, that means that $\hat{u}$ forces the possible sequence of cuts for $w$ into level-$1$ inflation words to also be fixed.
Hence, by disjoint images of letters and the fact that $w'$ has a unique decomposition into level-$1$ inflation words given an $N$-neighbourhood, the level-$2$ inflation word decomposition of $w'$ is also unique given an $LN$-neighbourhood.
That is, $w'$ is recognisable for $\rsub^2$ with radius of recognisability $LN$.

To extend the argument to all $p \geq 1$, let $w_{(2)} \coloneqq w$ and $w'_{(2)} \coloneqq w'$, then define $w_{(p)}$ to be a legal word containing $v, \hat{u}, w'_{(2)}, \ldots, w'_{(p-1)}$.
Then $w'_{(p)}$ is chosen to be an appropriate realisation of $\rsub(w_{(p)})$.
The analogous argument to the above then shows that $w'_{(p)}$ is recognisable for $\rsub^p$ with radius of recognisability $L^{p-1}N$.

($\impliedby$)
To prove the right-to-left implication, suppose that $u$ is an $N$-recognisable word for $\rsub^p$ and let $w = u_l u u_r$ be a legal superword of $u$ with $|u_l| = |u_r| = N$.
Suppose that any inflation word decomposition of $w$ for $\rsub^p$ induces the inflation word decomposition $([u_{(0)}, \ldots, u_{(\ell)}],v)$ on $u$.
Without loss of generality, assume that $u \in \rsub^p(v)$.
Then, $u \in \rsub(\rsub^{p-1}(v))$.
As $u$ has a unique decomposition into level-$p$ inflation words induced by $w$ and $\rsub$ has disjoint images of letters (and hence disjoint images by the fact that $\rsub$ is constant length), there is only one word $v' \in \rsub^{p-1}(v)$ such that $u \in \rsub(v')$.
It follows that $w$ induces a unique inflation word decomposition of $u$ for $\rsub$ given by
\[
\left(\left[\rsub\big(v'_0\big), \ldots, \rsub\big(v'_{|v'| - 1}\big)\right],v'\right)
\] for appropriate unique choices of $\rsub(v'_i)$.
\end{proof}
Note that for a constant length substitution with disjoint images of letters, if $u$ is a recognisable word, and we consider a word $\hat{u}$ for which $u$ is a subword, then for some $N$-neighbourhood $\hat{u}_{(N)}$ of $\hat{u}$, any inflation word decomposition $d \in D_\vartheta(\hat{u}_{(N)})$ will induce the same inflation word decomposition on $u$.
In particular, this means that all $\rsub$-cuttings of both $\hat{u}$ and $\hat{u}_{(N)}$ must be the same due to $\rsub$ being constant length.
Then, by the disjoint images property, this implies remarkably that $\hat{u}$ and even $\hat{u}_{(N)}$ have unique inflation word decompositions.
More precisely, we have shown that if $u$ is recognisable with radius of recognisability $N$, then all superwords of $u$ are also recognisable with radius of recognisability $N$, and all sufficiently long superwords (say $w = u^{(l)} u u^{(r)}$ with $|u^{(l)}|, |u^{(r)}| \geq N$) are recognisable with radius of recognisability $0$.

\begin{lemma}\label{LEM:recog-extend}
Let $\rsub$ be a constant length random substitution with disjoint images of letters.
Let $u \in \mc L_\rsub$ be a subword of $w = u^{(l)} u u^{(r)} \in \mc L_\rsub$.
If $u$ is $N$-recognisable, then $w$ is $N$-recognisable.
Further, if $|u^{(l)}|, |u^{(r)}| \geq N$, then $w$ is $0$-recognisable.
\qed
\end{lemma}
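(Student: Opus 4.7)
The plan is to bootstrap the local $N$-recognisability at $u$ to a uniqueness statement for $w$, using constant length $L$ and the disjoint images of letters hypothesis. Fix any legal super-word $\tilde{w} = w^{(l)} w w^{(r)}$ with $|w^{(l)}| = |w^{(r)}| = N$, so that $\tilde{w} = w^{(l)} u^{(l)} u u^{(r)} w^{(r)}$. Since there are at least $N$ letters on each side of $u$ inside $\tilde{w}$, the word $\tilde{w}$ contains an $N$-neighbourhood of $u$. Any inflation word decomposition $d \in D_\rsub(\tilde{w})$ restricts to a decomposition of this $N$-neighbourhood, which in turn induces an inflation word decomposition of $u$. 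By the $N$-recognisability of $u$, this induced decomposition of $u$ is the same (both as a $\rsub$-cutting and with the same root) for every such $d$.

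Next, the uniqueness of the induced decomposition of $u$ propagates to the whole of $w$. Because $\rsub$ is of constant length $L$, the cut positions of $d$ form an arithmetic progression with common difference $L$, and the induced decomposition of $u$ fixes the residue of this progression modulo $L$. Hence the cut positions of $d$ throughout $\tilde{w}$, and in particular throughout $w$, are independent of the choice of $d$. Each complete interior piece is then an exact image $\rsub(a)$ for a single letter $a$, and disjoint images of letters forces $a$ to be uniquely determined by that piece. It follows that both the cutting pattern and the root of the induced decomposition of $w$ are independent of $d$, which proves that $w$ is $N$-recognisable.

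For the second assertion, the hypothesis $|u^{(l)}|, |u^{(r)}| \geq N$ means that the $N$-neighbourhood of $u$ is already contained in $w$ itself. The argument above therefore applies verbatim with $\tilde{w} = w$ and $|w^{(l)}| = |w^{(r)}| = 0$, showing that any inflation word decomposition of $w$ is uniquely determined, i.e., $w$ is $0$-recognisable. The only delicate point is bookkeeping around the end pieces of the induced decomposition of $u$, which may be mere suffixes or prefixes of level-$1$ inflation words rather than complete ones; this causes no real obstacle because cutting positions throughout $w$ are pinned down by the progression-of-cuts argument rather than by the boundary pieces themselves, and the types of the complete interior pieces of $w$ are subsequently extracted via disjoint images of letters.
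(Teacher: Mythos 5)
Your argument is correct and is essentially the paper's own justification: the paper proves this lemma only via the paragraph immediately preceding it, which runs exactly as you do — the unique induced decomposition of $u$ pins the cutting positions modulo the constant length $L$ throughout the superword, and disjoint images of letters then determines the types of the interior pieces. You even flag the same bookkeeping issue about the partial end pieces that the paper leaves implicit, so there is nothing to add.
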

This will be used for the main results in this section.

The following technical Lemma will also be needed, allowing us to use a general marker-based method for building automorphisms.

\begin{lemma}\label{LEM:recog-words-big-gaps}
Let $\rsub$ be a non-degenerate, constant length, primitive, compatible random substitution with disjoint images of letters.
If $\rsub$ admits a recognisable word, then for some power $p\geq 1$, $\rsub^p$ admits a recognisable word $u$ such that there is an increasing sequence of lengths $(n_k)_{k \geq 1}$ whereby the sets
\[
V_k \coloneqq \{v \mid |v| = n_k, |uvu|_{u} = 2, uvu \in \mc L_\rsub\}
\]
contain at least $k$ words that share a common unique root.
\end{lemma}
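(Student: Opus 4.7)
The plan is to fix a power $p$ and an appropriate recognisable word $u \in \rsub^p(v)$, and then for each $k$ construct a single legal root $w_k \in \mc L_\rsub$ admitting at least $k$ distinct realisations of $\rsub^p(w_k)$ of the form $uv_iu$ with $u$ appearing only at the two intended endpoints. First I choose $p$: by non-degeneracy there is some $b_0 \in \mc A$ with $\#\rsub(b_0) \geq 2$, and primitivity then ensures that for $p$ sufficiently large, $\#\rsub^p(c) \geq 2$ for every $c \in \mc A$. By Lemma~\ref{LEM:recog-word}, $\rsub^p$ admits a recognisable word, which by Lemma~\ref{LEM:recog-extend} and the subsequent discussion may be extended to an exact level-$p$ inflation word $u \in \rsub^p(v)$; enlarging the extension if required, I assume $|v| \geq 2$, so that $D := \prod_{i=0}^{|v|-1}\#\rsub^p(v_i) \geq 2^{|v|}$ is sufficiently large for the estimates below.

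For each $k$, I take $w_k$ to be a legal word of the form $v \hat{w}_k v$ with $|\hat{w}_k|$ strictly increasing in $k$; arbitrarily long such legal words exist by primitivity, since $v$ occurs infinitely often in every element of $X_\rsub$. Fixing both endpoint $\rsub^p$-realisations of $w_k$ to be $u$ produces a set of middle realisations $v_i$, each of common length $n_k := L^p |\hat{w}_k|$, with total count $T_k \geq 2^{|\hat{w}_k|}$ by the branching condition on $p$.

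The key combinatorial step is to show that the number of "bad" realisations (those with $|uv_iu|_u > 2$) is a sub-dominant fraction of $T_k$. Recognisability of $u$, together with constant length and disjoint images of $\rsub^p$ (inherited from $\rsub$), force any additional occurrence of $u$ in $uv_iu$ to start at some position $jL^p$ at which $w_k[j:j+|v|-1] = v$ and at which the realisation of the corresponding $|v|$ letters produces exactly $u$; the fraction of realisations producing $u$ at any such single problematic position is $1/D$. Partitioning the set of $v$-occurrences in $w_k$ into clusters (maximal groups of mutually overlapping occurrences), constraints in different clusters act on disjoint letter positions and are therefore independent, while within each cluster the contributed bad fraction is bounded by a union bound. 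This yields an estimate of the form
\[
|V_k| \geq T_k \cdot (1 - \delta)^{O(\ell_k/|v|)}
\]
for some $\delta = \delta(D, |v|) < 1$ with $\delta \to 0$ as $D$ grows; since $|v| \geq 2$, the right-hand side grows exponentially in $|\hat{w}_k|$ and hence exceeds $k$ for $|\hat{w}_k|$ sufficiently large. Uniqueness of the common root $w_k$ for each $uv_iu$ then follows from Proposition~\ref{PROP:const-length-recog}, since $u$ is a recognisable subword of $uv_iu$ and $\rsub$ has disjoint images of letters.

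The main technical obstacle is the independence analysis of the bad events when $v$ has non-trivial self-periodicity, so that its occurrences in $w_k$ can overlap; the cluster partitioning resolves this, although the precise constants in the exponent are delicate and depend on $|v|$ and the period structure of $v$. A secondary point is ensuring $|v| \geq 2$ (hence $D$ large enough to absorb the cluster overhead), which can always be arranged by enlarging $p$ and extending the recognisable word $u$ suitably via Lemma~\ref{LEM:recog-extend}.
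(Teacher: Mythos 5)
Your overall strategy---fix the two boundary realisations to be $u$ and then argue that enough realisations of the middle block avoid creating further copies of $u$---is genuinely different from the paper's, and the difference matters: the counting step at the heart of your argument has a real gap. You assert that recognisability of $u$, constant length and disjoint images force any additional occurrence of $u$ in $uv_iu$ to start at an aligned position $jL^p$ over an occurrence of the root $v$ in $w_k$. This does not follow. Recognisability of $u$ only says that, for each fixed legal $N$-neighbourhood, all decompositions of that neighbourhood induce the same decomposition on $u$; it does not say that different occurrences of $u$ (in different contexts) induce the same decomposition. An extra copy of $u$ can perfectly well straddle inflation-word boundaries, i.e.\ begin at a position not divisible by $L^p$, with an induced decomposition whose outer pieces are proper suffixes/prefixes of inflation words of letters other than those of $v$. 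Your union bound only counts the aligned bad events, so it can miss bad realisations entirely; worse, at a non-aligned position the ``bad fraction'' need not be bounded away from $1$ (the relevant window could be forced in every realisation of the letters it meets), so there is no analogue of the $1/D$ estimate there. Even for the aligned events, the cluster/independence analysis is only sketched, and the inequality you ultimately need for exponential growth of $|V_k|$ is asserted rather than proved.

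The paper sidesteps all of this counting with a different device: it builds a sub-substitution $\rsub'$ that is degenerate except at one letter ($\rsub'(a)=\{u_1(a),u_2(a)\}$ with $u\triangleleft u_1(a)$, and a single image for every other letter), notes that $X_{\rsub'}\subsetneq X_\rsub$ by an entropy comparison, and then replaces the marker by a recognisable superword $\hat u$ containing a word that is $\rsub$-legal but not $\rsub'$-legal. Flipping $u_1(a)\leftrightarrow u_2(a)$ inside a long $\rsub'$-legal block then provably cannot create a new occurrence of $\hat u$, because the modified block stays in $\mc L_{\rsub'}$ while $\hat u\notin\mc L_{\rsub'}$; this yields at least $2^k$ words of the same length with the same root and exactly two occurrences of the marker. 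To rescue your approach you would need either to prove that every occurrence of a recognisable exact inflation word is aligned (false in general), or to import something like the paper's choice of marker, which makes new occurrences impossible rather than merely improbable.
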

\begin{proof}
Let $u$ be a recognisable word.
Let $p$ be large enough so that $\#\rsub^p(a) \geq 2$ for all $a \in \mc A$ and such that $u \blacktriangleleft \rsub^p(a)$ for some $a \in \mc A$.
Let $a \in \mc A$ be a fixed letter of the alphabet and choose $u_1(a),u_2(a) \in \rsub^p(a)$ such that $u\triangleleft u_1(a)$.
For every other letter $b \in \mc A \setminus \{a\}$, choose a single word $u(b)$.
Now define a new random substitution $\rsub' \colon \mc A \to \mc S$ by $\rsub'(a) = \{u_1(a),u_2(a)\}$ and $\rsub'(b) = \{u(b)\}$ for all $b \neq a$.
By construction, $X_{\rsub'}$ is a non-empty, closed, invariant subspace of $X_\rsub$.
In order to see that $X_{\rsub'}$ is a proper subset of $X_\rsub$, note that Gohlke \cite{G:entropy} showed that the topological entropy $h$ of a compatible random substitution with the `disjoint set condition' is given by a formula just in terms of the associated substitution matrix $M_\rsub$ and the vector $\mathbf{q}\coloneqq (\#\rsub(a_i))_{a_i \in \mc A}$ which is monotonically increasing in the entries of $\mathbf{q}$.
In particular, a constant length random substitution with disjoint images of letters satisfies the disjoint set condition, and so it can easily be shown from Gohlke's formula that $h(X_{\rsub'}) < h(X_\rsub)$ and so $X_{\rsub'}\subsetneq X_\rsub$.

We may now, without loss of generality, assume that $p = 1$.
Let $v \in \mc L_\rsub$ be some legal word which does not appear in the language of $\rsub'$.
It was shown by Rust and Spindeler \cite{RS:random} that $X_\rsub$ contains a point $x$ with a dense orbit.
Both the recognisable word $u$ and the word $v$ appear as subwords of $x$ and so there exists some legal word $\hat{u}$ which contains both $u$ and $v$ as subwords.
In particular, $\hat{u}$ is both recognisable by Lemma \ref{LEM:recog-extend} and does not appear in the language of $\rsub'$.
For a given $k$, let $w_k \in \mc L_{\rsub'}$ be a word sufficiently longer than the word $\hat{u}$ and containing many more than $k$ appearances of the inflation word $u_1(a)$.
By the denseness of the orbit of $x$, there exists a $\rsub$-legal word of the form $y_k = \hat{u} \square w_k \square \hat{u}$ such that $|y_k|_{\hat{u}} = 2$.
Let $n_k \coloneqq |y_k| - 2|\hat{u}|$.

As $w_k$ has many more than $k$ appearances of the inflation word $u_1(a)$, there are at least $2^k$ possible inflation word replacements which can be performed on $w_k$, replacing a subset of the $u_1(a)$s with $u_2(a)$s, to produce a new word $w'_k$.
As these replacements are also legal replacements for $\rsub$, the same replacements in $y_k$ will still produce an element $y'_k$ of $\mc L_\rsub$.
Further, because $w_k$ is sufficiently long, such a set of inflation word replacements can be performed at least a distance $|\hat{u}|$ away from the boundaries between the subword $w_k$ and the unknown blocks $\square$ in $y_k$.
As such, sufficiently many replacements can be performed with no new appearances of $\hat{u}$ being introduced into the new words $y'_k$ because the modified subword $w'_k$ is $\rsub'$-legal.
It follows that $|y'_k|_{\hat{u}} = 2$ for sufficiently many new words $y'_k$ that are obtained from $y_k$ by an inflation word replacement of this kind.
Also, for each of these words $y'_k$, $|y'_k| - 2|\hat{u}| = n_k$ by compatibility of $\rsub$, as required.
\end{proof}

In order to make up for the fact that we do not have recognisability of the random substitution, we will use the word $u$ constructed in Lemma \ref{LEM:recog-words-big-gaps} as a `marker' between which we are allowed to apply inflation word replacements.
Further, as we can control the distances between these gaps, thanks to the fact that the words in the sets $V_k$ all have the same length $n_k$, it means that such inflation word replacements are applied at mutually distinct locations in an element $x\in X_\rsub$, and so commute with one another.
This allows us to mimic the classic marker-based argument introduced by Hedlund \cite{H:curtis-hedlund-lyndon} and generalised by Boyle--Lind--Rudolph \cite{BLR:auto-sft} in order to embed an arbitrary countable direct sum of finite groups into the automorphism group $\Aut(X_\rsub)$ when the random substitution is not necessarily recognisable but is constant length, has disjoint images and admits a recognisable word (as the random period doubling substitution does).
\begin{theorem}\label{THM:const-length-fin-groups}
Let $\rsub$ be a non-degenerate, constant length, primitive, compatible random substitution with disjoint images of letters and which admits at least one recognisable word $u$.
Let $G$ be a countable direct sum of finite groups.
The group $G$ embeds into the automorphism group $\Aut(X_\rsub)$.
\end{theorem}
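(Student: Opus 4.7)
The plan is to mirror, in the random substitution setting, the classical marker-based embedding of countable direct sums of finite groups into automorphism groups of subshifts due to Hedlund \cite{H:curtis-hedlund-lyndon} and Boyle--Lind--Rudolph \cite{BLR:auto-sft}. The recognisable word $u$ from Lemma \ref{LEM:recog-words-big-gaps} will serve as a synchronising marker, and the families $V_k$ of words of length $n_k$ with common unique root and $|uvu|_u = 2$ will serve as alphabets of interchangeable ``blocks''. Up to replacing $\rsub$ with a suitable power $\rsub^p$ (which leaves $X_\rsub$ unchanged) we may, by Lemma \ref{LEM:recog-words-big-gaps}, assume such $u$ and $V_k$ exist with $|V_k| \geq k$.

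\medskip

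For a fixed $k$ and $\alpha \in \Sym(V_k)$, I would define $f_\alpha \colon X_\rsub \to X_\rsub$ as the sliding block code that scans $x$ for every occurrence $x_{[i,\,i+2|u|+n_k-1]} = uvu$ with $v \in V_k$ and replaces the central factor $v$ by $\alpha(v)$. The key observation is that the condition $|uvu|_u = 2$ forces the two flanking $u$s in any such occurrence to be \emph{consecutive} $u$-markers of $x$; hence distinct $V_k$-patterns in a given $x$ occupy disjoint $v$-regions, and the replacements never conflict. To verify $f_\alpha(x) \in X_\rsub$, Lemma \ref{LEM:recog-extend} upgrades $uvu$ to $0$-recognisability, and the common-root hypothesis gives every such $uvu$ the same root $r_k$; Proposition \ref{PROP:const-length-recog} then supplies a unique preimage $y$ of $x$ under $\rsub$ as soon as $x$ contains $u$, and $f_\alpha(x)$ is just another realisation of $\rsub(y)$, hence lies in $X_\rsub$. (If $x$ contains no occurrence of $u$, then $f_\alpha$ acts trivially on $x$.) As a sliding block code with window $2|u|+n_k$, the map $f_\alpha$ is continuous and shift-commuting, and $f_{\alpha^{-1}}$ is its two-sided inverse, so $f_\alpha \in \Aut(X_\rsub)$. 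One checks that $\alpha \mapsto f_\alpha$ is an injective homomorphism $\Sym(V_k) \hookrightarrow \Aut(X_\rsub)$, injectivity coming from primitivity, which guarantees an $x \in X_\rsub$ containing any prescribed $uvu$ on which $f_\alpha$ acts non-trivially.

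\medskip

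To upgrade from a single $\Sym(V_k)$ to the full direct sum $G = \bigoplus_{m \geq 1} G_m$, I would pass to a subsequence $(k_m)$ with all $n_{k_m}$ distinct and $|V_{k_m}| \geq |G_m|$; both are immediate from $|V_k| \geq k$ and the fact that the sequence $(n_k)$ is increasing. By Cayley's theorem each finite group $G_m$ embeds into $\Sym(V_{k_m})$. The same observation as above---that a $V_{k_m}$-pattern occupies exactly the $v$-region between one pair of consecutive $u$-markers, whose length singles out $k_m$ uniquely---shows that for $m \neq m'$, the modified regions of $f_\alpha$ with $\alpha \in \Sym(V_{k_m})$ are disjoint from those of $f_{\alpha'}$ with $\alpha' \in \Sym(V_{k_{m'}})$, so these two subgroups of $\Aut(X_\rsub)$ commute elementwise. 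Assembling the homomorphisms $\Sym(V_{k_m}) \hookrightarrow \Aut(X_\rsub)$ over finitely supported tuples yields the required embedding of $G$ into $\Aut(X_\rsub)$.

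\medskip

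I expect the main technical obstacle to be the careful bookkeeping in the well-definedness check for $f_\alpha$, and in particular the verification that $|uvu|_u = 2$ forces the two flanking $u$s in any realised occurrence to be consecutive $u$-markers of $x$---this is the step where the condition does real work, ruling out both internal copies of $u$ inside $v$ and non-trivial self-overlap of the marker $u$ across the $u$--$v$ boundary. Once this compatibility is settled, the commutativity of the various $f_\alpha$ across different indices $k_m$ and the final assembly via Cayley's theorem are routine.
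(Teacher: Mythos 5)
Your proposal is correct and follows essentially the same route as the paper's own proof: the marker word $u$ from Lemma \ref{LEM:recog-words-big-gaps}, permutations of the common-root families $V_k$ realised as sliding block codes, non-overlap of the $v$-regions via $|uvu|_u=2$, commutativity across distinct lengths $n_k$, and Cayley's theorem to assemble the direct sum. Your verification that $f_\alpha(x)\in X_\rsub$ (via the common unique root and Proposition \ref{PROP:const-length-recog}) is in fact slightly more explicit than the published argument, which leaves that legality check implicit.
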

\begin{proof}
Let $p \geq 1$ and $u \in \mc L_\rsub$ be such that $u$ is recognisable with respect to $\rsub^p$ and there is an increasing sequence of lengths $(n_k)_{k \geq 1}$ whereby the sets
\[
V_k \coloneqq \{v \mid |v| = n_k, |uvu|_{u} = 2, uvu \in \mc L_\rsub\}
\]
contain at least $k$ words that share a common unique root, as guaranteed to exist by Lemma \ref{LEM:recog-words-big-gaps}.
Let $k \geq 1$ be given.
Our goal is to show that the symmetric group $S_k$ embeds into $\Aut(X_\rsub)$.

Let $v_1, \ldots, v_k$ be $k$ elements in the set $V_k$.
For a permutation $\pi \in S_k$, we define an automorphism $f_{\pi} \colon X_\rsub \to X_\rsub$ by mapping an element $x \in X_\rsub$ to an inflation word replacement $f_\pi(x)$ given by replacing any occurrence of $u v_i u$ in $x$ with the word $u v_{\pi(i)} u$.
The map $f_\pi$ is well-defined, as the recognisable word $u$ only appears as a subword of $uv_ku$ as a prefix and suffix, and nowhere else.
Hence, no two words $uv_{k}u$ and $uv_{\ell}u$ can overlap in $x$ except as a prefix-suffix pair.
As $u$ remains unchanged under the map, there is no ambiguity in where inflation words are being replaced.
As $f_\pi$ is defined locally, it is continuous, and shift-invariant.
It is clear that $f_\pi \circ f_{\pi^{-1}} = f_{\pi^{-1}} \circ f_\pi = \operatorname{Id}_{X_\rsub}$ and so $f_\pi$ is an automorphism.
Likewise, the map $S_k \to \Aut(X_\rsub) \colon \pi \mapsto f_\pi$ is a homomorphism, and as $v_k = v_{\pi(k)}$ for all $k$ if and only if $\pi = e$, this mapping is also an embedding.

Write $H_k = \{f_\pi \mid \pi \in S_k\}$.
We note, by construction, that if $k \neq \ell$, then $n_k \neq n_\ell$ and so if $f_\pi \in H_k$ and $f_\rho \in H_\ell$, then $\left[f_\pi,f_\rho\right] = \operatorname{Id}_{X_\rsub}$.
That is, if $k \neq \ell$, then automorphisms in $H_k$ commute with automorphisms in $H_\ell$.
So $\bigoplus_{k=1}^\infty H_k$ is a subgroup of $\Aut(X_\rsub)$.
Hence, by Cayley's theorem, $G$ embeds into $\Aut(X_\rsub)$ because $\bigoplus_{k=1}^\infty H_k$ does.
\end{proof}

A similar marker-based technique, again in the style of Hedlund and Boyle--Lind--Rudolph, can be used to show that the free product $(\Z/2\Z)^{\ast k}$ of $k$ involutions also embeds into $\Aut(X_\rsub)$, and hence any free group $F_k$ on $k \geq 1$ generators, and the free group $F_\omega$ on countably many generators.
We omit this proof, as the method is now well-established.
\begin{theorem}\label{THM:const-length-freegroup}
Let $\rsub$ be a constant length, primitive, compatible random substitution with disjoint images of letters and which admits at least one recognisable word $u$.
Let $k \geq 1$ and let $G = (\Z/2\Z)^{\ast k}$.
The group G embeds into the automorphism group $\Aut(X_\rsub)$.
\end{theorem}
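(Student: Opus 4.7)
The approach is to mimic the Hedlund/Boyle--Lind--Rudolph construction of $(\Z/2\Z)^{*k}$ in $\Aut(X_2)$, adapted to the marker structure provided by the recognisable word furnished by Lemma \ref{LEM:recog-words-big-gaps}. The construction in the proof of Theorem \ref{THM:const-length-fin-groups} only produces commuting marker-swap involutions, because distinct ``swap lengths'' $n_k$ correspond to slots at disjoint positions; to obtain a free product instead of a direct sum, I will use slots of a single length $n$ and design the involutions so that their compositions act non-trivially.

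First, I apply Lemma \ref{LEM:recog-words-big-gaps} to a power $\rsub^p$ to obtain a recognisable word $u$ and a length $n$ such that the set $V_n = \{v : |v| = n,\ |uvu|_u = 2,\ uvu \in \mc L_\rsub\}$ contains many elements sharing a common unique root. As in Theorem \ref{THM:const-length-fin-groups}, the common root property guarantees that any swap of elements of $V_n$ between bracketing markers $u$ yields a well-defined automorphism of $X_\rsub$. Next, I group consecutive $u$-bracketed slots into ``gadgets'' of a fixed number of slots and encode a reduced word in $(\Z/2\Z)^{*k}$ as a gadget pattern, with each choice of content word in $V_n$ representing a letter of the gadget alphabet. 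For each $i$, define $g_i \in \Aut(X_\rsub)$ as the involution that acts on each gadget by prepending the $i$-th free-product generator, cancelling if the gadget already begins with $g_i$. Each $g_i$ is well-defined (common root), continuous (local rule), shift-commuting (constant slot length $n$), and an involution (by construction).

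Verification that $\langle g_1, \ldots, g_k \rangle \cong (\Z/2\Z)^{*k}$ proceeds by a standard ping-pong argument: for each $i$, the set $X_i \subset X_\rsub$ of configurations whose central gadget encodes a reduced word beginning with $g_i$ is non-empty (by the existence of dense orbits in $X_\rsub$) and satisfies $g_i(X_j) \subseteq X_i$ for $j \neq i$ by the cancellation rule built into $g_i$. The ping-pong lemma then yields the desired embedding.

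The main obstacle is verifying that the gadget encoding can be realised as a genuine subword structure in $X_\rsub$: one must check that every reduced word in $(\Z/2\Z)^{*k}$ corresponds to a legal concatenation of content words from $V_n$ between consecutive markers $u$. This reduces to showing that $V_n$ is large enough (taking $n$ large via Lemma \ref{LEM:recog-words-big-gaps}) and that arbitrary concatenations of the chosen representatives can be extended to $\rsub^p$-legal superwords. The common unique root property supplies the latter, since such concatenations remain legal within a single inflation structure, while the recognisability of $u$ ensures that the gadget boundaries are locally detectable, making the construction genuinely shift-equivariant.
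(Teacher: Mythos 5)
Your overall strategy (marker slots bracketed by the recognisable word $u$, content words from a common-root set $V_n$, involutions defined by local replacement) is the right starting point, and it is the same starting point as the paper's. But the paper does not attempt a direct gadget encoding of reduced words: it observes that the theorem is a corollary of the stronger Theorem \ref{THM:const-length-full-shift}, which embeds all of $\Aut(X_2)$ via the conveyor-belt method, after which $(\Z/2\Z)^{\ast k}\leq\Aut(X_2)$ is quoted from the literature. Your proposed shortcut has a genuine gap at its core. A ``gadget'' consisting of a \emph{fixed} number of slots has only finitely many states, so it cannot encode arbitrary reduced words of the infinite group $(\Z/2\Z)^{\ast k}$ ($k\geq 2$). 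Consequently the rule ``prepend $g_i$, cancelling if the gadget already begins with $g_i$'' is not well-defined: prepending to a gadget already encoding a maximal-length reduced word not beginning with $g_i$ overflows. Any patch (e.g.\ acting as the identity on full gadgets) destroys the ping-pong inclusions $g_i(X_j)\subseteq X_i$ precisely on those overflow configurations, and alternating products of the generators reach the overflow state after boundedly many steps, so the argument cannot certify that such products are non-trivial. This is exactly why free products in automorphism groups of shifts require a genuinely different mechanism from the direct-sum marker argument of Theorem \ref{THM:const-length-fin-groups}.

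There are two further unaddressed points. First, ``grouping consecutive $u$-bracketed slots into gadgets of a fixed number of slots'' requires a shift-equivariant, locally computable choice of where each gadget begins; with only one marker type and no canonical origin there is no such partition of a bi-infinite run of slots into consecutive blocks of size $m>1$, so you would need a second delimiting marker, and you would also need to say what happens at the end of a run of slots (slots need not recur with bounded gaps in every $x\in X_\rsub$) --- this boundary problem is the entire reason the paper introduces the two-track conveyor-belt encoding. Second, for $k=2$ the ping-pong lemma for two factors both of order $2$ does not by itself yield freeness, so even the group-theoretic endgame needs more care. A correct direct proof would either adopt the conveyor-belt encoding (at which point you may as well prove Theorem \ref{THM:const-length-full-shift} and deduce this statement), or follow the Boyle--Lind--Rudolph-style construction in which the generators act on data of unbounded extent between markers rather than on a bounded gadget. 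Note also that your invocation of Lemma \ref{LEM:recog-words-big-gaps} implicitly uses the non-degeneracy and disjoint-images hypotheses of Theorem \ref{THM:const-length-fin-groups}, which should be stated.
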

In fact, the marker method can be generalised to show that the automorphism group $\Aut(X_n)$ of any full shift embeds into $\Aut(X_\rsub)$ when the conditions on $\rsub$ in Theorem \ref{THM:const-length-fin-groups} are satisfied.
Theorems \ref{THM:const-length-fin-groups} and \ref{THM:const-length-freegroup} are then simple corollaries of this stronger embedding result.

The idea is that for $v_0,v_1 \in V_2$, we use $uv_iu$ as a marker to code elements of the full $2$-shift over the symbols $\{0,1\}$.
As the words $uv_iu$ appear with uniformly bounded gaps in some element $x \in X_\rsub$, then an automorphism on the $2$-shift induces a local map on the coding words $v_i$.

The tricky thing is what one then does for elements that do not have the words $uv_i u$ appearing with uniformly bounded gaps.
What does one do when reaching the `end' of a run of $uv_iu$s that is followed by a large gap before the next occurrence (if any)?
To get around this difficulty, we use a slightly more involved encoding of a full shift which carries two bits of information, rather than one.
The induced action of an automorphism on a full shift then acts on the `top' bit of information as usual, but on the `bottom' bit of information in reverse.
At the end of a run, you then `wrap' this action around.
This idea is sometimes referred to as the \emph{conveyor belt} method \cite{S:conveyor}.

\begin{theorem}\label{THM:const-length-full-shift}
Let $\rsub$ be a constant length, primitive, compatible random substitution with disjoint images of letters and which admits at least one recognisable word $u$.
Then $\Aut(X_\rsub)$ contains the automorphism group $\Aut(X_2)$ of the full $2$-shift $X_2$.
\end{theorem}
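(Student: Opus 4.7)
The plan is to adapt the marker methods of Theorems \ref{THM:const-length-fin-groups} and \ref{THM:const-length-freegroup} using the \emph{conveyor belt} technique of \cite{S:conveyor} to upgrade finite-group embeddings to an embedding of the entire group $\Aut(X_2)$. First, I would apply Lemma \ref{LEM:recog-words-big-gaps} to pass (if necessary) to a power $\rsub^p$ admitting a recognisable word $u$ and words $v_0, v_1 \in V_k$ of a common length sharing a unique root; since $\Aut(X_{\rsub^p}) = \Aut(X_\rsub)$ we may assume $p=1$. Define the \emph{markers} $m_i \coloneqq u v_i u$ for $i \in \{0,1\}$, which by Lemma \ref{LEM:recog-extend} are $0$-recognisable, have a common length and common root, so that swapping any occurrence of $m_0$ with $m_1$ in an element of $X_\rsub$ produces another element of $X_\rsub$.

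For each $x \in X_\rsub$, I would parse out the set $P(x) \subseteq \Z$ of starting positions of markers in $x$ (which, by the recognisability of $m_i$ together with Proposition \ref{PROP:const-length-recog}, is determined locally). Decompose $P(x)$ into maximal \emph{runs} of consecutive markers (two markers being consecutive if their positions differ by exactly $|m_0|$), where each run is finite, one-sided infinite, or bi-infinite. Each run carries a bit-sequence in $\{0,1\}$ given by which of $m_0, m_1$ appears. Given $\alpha \in \Aut(X_2)$ with sliding block radius $r$, define $\tilde\alpha \colon X_\rsub \to X_\rsub$ to act identically outside the markers and to act on each run as follows: on bi-infinite runs apply $\alpha$ directly as a sliding block code; on a finite run of length $n$ with bits $y = y_1 \cdots y_n$, form the bi-infinite reflected belt $\cdots y^R y y^R y \cdots$ of period $2n$, apply $\alpha$, and read back the transformed bits; on one-sided runs apply the analogous reflection at the finite endpoint. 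Then replace each marker in $x$ by $m_0$ or $m_1$ according to the new bit, leaving the $u$-borders fixed.

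I then have to check four things: (i) $\tilde\alpha(x) \in X_\rsub$, which follows from the common-root property of $v_0, v_1$; (ii) $\tilde\alpha$ is a sliding block code of radius $O(r\cdot |m_0|)$, because in a window of that size the nearest $r+1$ markers on each side of any position are visible, together with any endpoint of a run close enough to affect the value at that position via the reflection; (iii) $\alpha \mapsto \tilde\alpha$ is a homomorphism, with inverse $\widetilde{\alpha^{-1}}$, which is the main reason we use the reflected belt rather than a naive periodic extension---the reflection intertwines the actions of $\alpha$ and $\alpha^{-1}$ so that runs of length $n$ are mapped to runs of length $n$ consistently; and (iv) $\alpha \mapsto \tilde\alpha$ is injective, for which I would use Lemma \ref{LEM:recog-words-big-gaps} with $k$ large enough to guarantee the existence of some $x \in X_\rsub$ containing a run of at least $2r+1$ consecutive markers realising any prescribed local pattern on which $\alpha$ acts non-trivially.

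The main obstacle will be the careful implementation of step (iii): ensuring that the reflected conveyor belt both commutes with composition and produces a well-defined element of $X_\rsub$ in all three cases of run type simultaneously, and in particular verifying that the rule is truly local at the boundary between a run and its adjacent gap. A secondary difficulty is the injectivity step, where one must leverage primitivity and the density of transitive orbits (as in Lemma \ref{LEM:recog-words-big-gaps}) to produce runs of arbitrary length in elements of $X_\rsub$---not merely markers in isolation, but long blocks of consecutive markers---so that any non-trivial $\alpha \in \Aut(X_2)$ is witnessed by some $x$ in the subshift.
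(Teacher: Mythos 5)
Your overall strategy (markers from Lemma \ref{LEM:recog-words-big-gaps}, runs of markers, a conveyor belt at the ends of runs) is the right one and matches the paper, but your implementation of the belt has a genuine flaw. You encode one bit per marker using two words $v_0,v_1$, and on a finite run with bit string $y$ you form the $2n$-periodic point with fundamental domain $y^R y$, apply $\alpha$, and read back the $y$-window. This is not a homomorphism. The configuration $\cdots y^R y y^R y\cdots$ is invariant under a reflection $T=\sigma^{-(2n+1)}R$, but automorphisms of $X_2$ commute with $\sigma$, not with the reversal $R$, so $\beta$ of a reflection-symmetric point need not be reflection-symmetric; hence the point you feed to $\alpha$ in computing $\tilde\alpha(\tilde\beta(x))$ (namely the re-reflected belt built from $z=\beta(w)|_{[1,n]}$) is not $\beta(w)$, and $\tilde\alpha\circ\tilde\beta\neq\widetilde{\alpha\beta}$ in general. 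Already $\alpha=\beta=\sigma$ gives a counterexample: $\widetilde{\sigma^2}$ reads $w_{[3,n+2]}=y_3\cdots y_ny_ny_{n-1}$ while $\tilde\sigma\circ\tilde\sigma$ reads $y_3\cdots y_ny_ny_{n}$, which differ when $y_{n-1}\neq y_n$. Your claim that ``the reflection intertwines the actions of $\alpha$ and $\alpha^{-1}$'' is the precise point of failure: reversal conjugates $\alpha$ to $R\alpha R$, which is neither $\alpha$ nor $\alpha^{-1}$. The paper's proof avoids this by taking \emph{four} words $v_0^0,v_1^0,v_0^1,v_1^1$ from $V_4$, so each marker carries two independent bits: the top bits read left-to-right and the bottom bits read right-to-left form the two tracks of a genuine belt that is glued into a single (periodic or bi-infinite) element of $X_2$ at the ends of a run; $\alpha$ then acts on that element directly, with no reversal conjugation, and the homomorphism and inverse properties become immediate.

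Two secondary issues. First, you define runs by markers whose starting positions differ by exactly $|m_0|$; Lemma \ref{LEM:recog-words-big-gaps} only produces words $uvu$ with $|uvu|_u=2$, not adjacent markers, so such runs may never be long, which undermines your injectivity step (iv). The paper instead breaks runs only at gaps longer than a uniform bound $N$, and constructs a point $x_\infty$ (by realising every $\rsub^p(a)$ so as to contain $uv_0^0u$) in which markers recur with gap at most $N$; this both makes $f_\alpha$ a sliding block code and supplies the witnesses for faithfulness. Second, your appeal to Proposition \ref{PROP:const-length-recog} is unnecessary and unavailable here, since the theorem does not assume disjoint images of letters; local determination of marker positions follows from $0$-recognisability of the extended markers (Lemma \ref{LEM:recog-extend}) alone.
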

\begin{proof}
By Lemma \ref{LEM:recog-words-big-gaps}, and replacing $\rsub$ with a power if necessary, there exists a word $u \in \mc L_\rsub$ such that $u$ is recognisable with respect to $\rsub$ and there is a length $n_4$ whereby the set
\[
V_4 = \{v \mid |v| = n_4, |uvu|_u = 2, uvu \in \mc L_\rsub\}
\]
contains at least $4$ words that share a common unique root.
Label four of these words $v_0^0, v_1^0, v_0^1, v_1^1$.
In this coding, we think of the superscript as referring to the  `top track', and the subscript as being the `bottom track' of a conveyor belt.

Let $a \in \mc A$ and $p \geq 1$ be such that $uv_i^ju \blacktriangleleft\rsub^p(a)$ for all $(i,j) \in \{0,1\}^2$.
As $X_\rsub$ has uniformly bounded gaps between letters, let $\hat{N} \geq 1$ be such that every legal word in $\mc L^{\hat{N}}_\rsub$ contains at least two copies of the letter $a$.
Let $N = L^p\hat{N}$.
Then, by taking all realisations of $\rsub^p(a)$ to contain $uv_0^0u$, there exists an element $x_\infty \in X_\rsub$ such that every subword $w \triangleleft x_\infty$ of length at least $|w| \geq N$ contains two copies of $uv_0^0u$.

For $x \in X_\rsub$, write
\[
x = \cdots \blacksquare \: uv_{i(n)}^{j(-n)}u \: \square \:  \cdots  \: \square \: uv_{i(1)}^{j(-1)}u \: \square \: uv_{i(0)}^{j(0)}u \: \square \: uv_{i(-1)}^{j(1)}u \: \square \:  \cdots \: \square \: uv_{i(-m)}^{j(m)}u \: \blacksquare \cdots,
\]
where $\square$ represents a word such that $u\square u$ contains no appearances of a word $uv_i^ju$ with $(i,j) \in \{0,1\}^2$ with length $|\square| \leq N$, and $\blacksquare$ represents a word such that $u\blacksquare u$ contains no appearance of a word $uv_i^ju$ with $(i,j) \in \{0,1\}^2$ with length $|\blacksquare| >N$.
Note that $\blacksquare$ can possible be a left(right)-infinite word.
Notice that the indices $i$ of the bottom track are reversed.
For ease of notation, we will suppress the markers $u$ in this representation, being absorbed into a neighbouring $\square$ or $\blacksquare$, and so write
\[
x = \cdots \blacksquare \: v_{i(n)}^{j(-n)} \: \square \:  \cdots  \: \square \: v_{i(1)}^{j(-1)} \: \square \: v_{i(0)}^{j(0)} \: \square \: v_{i(-1)}^{j(1)} \: \square \:  \cdots \: \square \: v_{i(-m)}^{j(m)} \:\blacksquare \cdots.
\]

Let $X_2$ be the full $2$-shift.
Let $\alpha$ be an automorphism of the full $2$-shift $X_2$.
We define an automorphism $f_\alpha \colon X_\rsub \to X_\rsub$ on $x$ in the following way.

If $x$ contains no $\blacksquare$, then 
\[
f_\alpha(x) \coloneqq \cdots \: \square \: v_{\alpha(i)(2)}^{\alpha(j)(-2)}  \: \square \: v_{\alpha(i)(1)}^{\alpha(j)(-1)} \: \square \: v_{\alpha(i)(0)}^{\alpha(j)(0)} \: \square \: v_{\alpha(i)(-1)}^{\alpha(j)(1)} \: \square \: v_{\alpha(i)(-2)}^{\alpha(j)(2)} \: \square \: \cdots,
\]
where $i$ and $j$ are considered as elements of the full $2$-shift $X_2$.
So we have applied $\alpha$ to the top indices of the $v_{i}^{j}$ and also to the bottom indices of $v_{i}^{j}$ but where the sequence $i$ is in reverse.

If $x$ contains a $\blacksquare$, then near this position, write

\[
x = \cdots v_{i(2)}^{i(-3)} \: \square \: v_{i(1)}^{i(-2)} \: \square \: v_{i(0)}^{i(-1)} \: \blacksquare \: v_{j(-1)}^{j(0)} \: \square \: v_{j(-2)}^{j(1)} \: \square \: v_{j(-3)}^{j(2)} \: \square \: \cdots ,
\]
where $i$ and $j$ are considered as elements of the full $2$-shift that have been `wrapped over' when they get near a $\blacksquare$.
Then we define

\[
f_\alpha(x) \coloneqq \cdots v_{\alpha(i)(2)}^{\alpha(i)(-3)} \: \square \: v_{\alpha(i)(1)}^{\alpha(i)(-2)} \: \square \: v_{\alpha(i)(0)}^{\alpha(i)(-1)} \: \blacksquare \: v_{\alpha(j)(-1)}^{\alpha(j)(0)} \: \square \: v_{\alpha(j)(-2)}^{\alpha(j)(1)} \: \square \: v_{\alpha(j)(-3)}^{\alpha(j)(2)} \: \square \: \cdots.
\]

Between two appearances of $\blacksquare$, the indices of the $v_i^j$ act like the coding of a periodic element of $X_2$ wrapped around in a circle.

As everything is locally defined, $f_\alpha$ is well-defined, continuous and commutes with the shift action.
It is also clear that $f_{\alpha^{-1}} = f_\alpha^{-1}$ and $f_{\alpha\circ \beta} = f_\alpha\circ f_\beta$, and so $f_\alpha$ is an automorphism.
It remains to check that the map $\alpha \mapsto f_\alpha$ is faithful.

Let $\alpha$ be non-trivial.
So there exists an element $j \in X_2$ such that $\alpha(j) \neq j$.
Take an element $x \in X_\rsub$ with no appearances of a $\blacksquare$ with the form
\[
x = \cdots \: \square \: v_{j(2)}^{j(-2)}  \: \square \: v_{j(1)}^{j(-1)} \: \square \: v_{j(0)}^{j(0)} \: \square \: v_{j(-1)}^{j(1)} \: \square \: v_{j(-2)}^{j(2)} \: \square \: \cdots,
\]
which exists because the element $x_\infty$ constructed above exists (just replace the appearances of $v_0^0$ with a corresponding $v_{j(-n)}^{j(n)}$).
By construction, $f_\alpha(x) \neq x$ and so $f_\alpha$ is non-trivial.
Hence the map $\alpha \mapsto f_\alpha$ is faithful and so $\Aut(X_2)$ embeds in $\Aut(X_\rsub)$.
\end{proof}

\begin{coro}
Let $\rsub \colon a \mapsto \{ab,ba\}, b\mapsto \{aa\}$ be the random period doubling substitution and let $X_\rsub$ be its RS-subshift.
The automorphism group $\Aut(X_\rsub)$ contains the automorphism group $\Aut(X_2)$ of the full $2$-shift $X_2$. \qed
\end{coro}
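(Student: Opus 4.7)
The plan is to verify the hypotheses of Theorem \ref{THM:const-length-full-shift} for the random period doubling substitution $\rsub \colon a \mapsto \{ab,ba\}, b \mapsto \{aa\}$ and apply the theorem directly. Concretely, I need to check that $\rsub$ is constant length, primitive, compatible, and admits at least one recognisable word.

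First I would observe that $\rsub$ is of constant length $L=2$, since each realisation of each letter has length $2$. Compatibility is immediate: both $ab$ and $ba$ in $\rsub(a)$ have abelianisation $(1,1)^T$, and $\rsub(b) = \{aa\}$ is a singleton. The substitution matrix is then
\[
M_\rsub = \begin{pmatrix} 1 & 2 \\ 1 & 0 \end{pmatrix},
\]
and a direct computation shows $M_\rsub^2$ has all entries positive, so $\rsub$ is primitive.

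Next I would recall from Example \ref{EX:period-doubling-recog} that the word $bb$ is recognisable for $\rsub$: its unique inflation word decomposition (regardless of how induced) is $([b,b],(aa))$, so $bb$ has radius of recognisability $0$. This is the recognisable word needed in the hypothesis of Theorem \ref{THM:const-length-full-shift}.

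With all four hypotheses verified, Theorem \ref{THM:const-length-full-shift} immediately yields that $\Aut(X_2)$ embeds into $\Aut(X_\rsub)$. The main (and only) potential subtlety is making sure that the recognisable word $bb$ indeed satisfies the local recognisability notion used in the theorem, but this is precisely what Example \ref{EX:period-doubling-recog} establishes, so no further work is required.
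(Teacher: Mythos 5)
Your proposal is correct and matches the paper's intent exactly: the corollary is stated with a \qed precisely because it is an immediate application of Theorem \ref{THM:const-length-full-shift}, whose hypotheses (constant length $L=2$, compatibility, primitivity via $M_\rsub^2>0$, and the recognisable word $bb$ from Example \ref{EX:period-doubling-recog}) you verify correctly. One could additionally note that $\rsub$ is non-degenerate with disjoint images of letters, since the proof of that theorem invokes Lemma \ref{LEM:recog-words-big-gaps} which needs these properties, but they hold trivially here ($\rsub(a)\cap\rsub(b)=\varnothing$ and $\#\rsub(a)=2$).
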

\subsection{Discussion and Open Questions}
One is naturally drawn to comparing random substitution subshifts with their deterministic cousins. On the other hand, our study of automorphism groups for RS-subshifts shows that they compare more closely with those of full shifts and shifts of finite type, having rather large and `wild' automorphism groups, while deterministic substitution subshifts have automorphism groups that are finite extensions of $\mathbb{Z}$. This is not so surprising given the plethora of recent results that highlight the connections between low complexity and small automorphism groups \cite{PS:autos-low-complexity}, coupled with the fact that substitution subshifts have linear complexity (in particular zero entropy), while the RS-subshifts studied here all have exponential complexity (positive entropy).

One important similarity between our methods and previous studies of deterministic substitutions is the importance of recognisability \cite{BSTY:recog} as a tool for understanding and manipulating the subshifts---indeed, the hierarchical structure of the subshift is exactly what allows for the shuffle group to be defined and which gives rise to this interesting subgroup of automorphism on the RS-subshift. When recognisability is no longer available, we have seen in Section \ref{SEC:const-length} that more traditional `marker methods' from the world of SFTs are needed to prove similar results on the automorphism group.

We have only provided here a first investigation of these intricate automorphism groups and their interplay with the particular properties of the defining random substitution.
There is still much to be understood about the subgroup structure and other open questions.

While the random period doubling substitution admits periodic elements in its subshift, not all of the random substitutions satisfying the conditions of Theorem \ref{THM:const-length-full-shift} have periodic points.
Therefore, it is an open question as to whether these RS-subshifts have residually finite automorphism groups when they are aperiodic and non-recognisable.

For example, consider the random substitution $\rsub \colon a \mapsto \{abaa, aaba\}, \: b \mapsto \{abab, baba\}$.
It is constant length, primitive, compatible and admits the recognisable word $aaaa$.
It is not recognisable because the two marginal substitutions $\sub_1 \colon a \mapsto abaa,\: b \mapsto abab$ and $\sub_2 \colon a \mapsto aaba,\: b \mapsto baba$ are conjugate ($(aba)^{-1}\sub_1(aba) = \sub_2$), and therefore their bi-infinite fixed points are the same up to a shift, hence admit two preimages.
Nevertheless, $X_\rsub$ is aperiodic, which can be seen by verifying the criterion given in \cite[Corollary 36]{R:random-periodic}, as every element of $X_\rsub$ must contain at least one of $aaab$, $baaa$, $abba$ or $baab$ as a subword.
We therefore do not currently know if $\Aut(X_\rsub)$ is residually finite or not.
Most likely the existence of a recognisable word is still enough for the automorphism group to contain an infinite simple subgroup. So, as well as the above, we ask the following more general question.
\begin{question}\label{Q:aperiodic-res-fin}
Does there exist a primitive, compatible, non-recognisable random substitution $\rsub$ whose subshift is aperiodic but whose automorphism group \textbf{(i)} is residually finite, \textbf{(ii)} does not contain an infinite simple subgroup?
\end{question}
An example satisfying Question \ref{Q:aperiodic-res-fin} would be interesting, as we do not yet have any examples of random substitutions whose subshifts can be differentiated up to topological conjugacy by comparing their automorphism groups, other than by using much simpler invariants such as the existence of periodic points.
Perhaps, when $\rsub$ is recognisable, the shuffle group $\Gamma$ is a good candidate for differentiating pairs of examples, as we have a much better understanding of the structure of $\Gamma$.
\begin{question}
Is $\Gamma$ (or a characteristic property of $\Gamma$ such as some notion of asymptotic growth rate) an invariant of topological conjugacy for $X_\rsub$?
\end{question}
This gives rise to the more philosophical question:
\begin{question}
How good is $\Aut(X_\rsub)$ at classifying RS-subshifts?
\end{question}

As far as more general properties of the automorphism group are concerned, natural questions that we have not yet explored include whether $\Aut(X_\rsub)$ is finitely generated, whether Ryan's Theorem \cite{R:Ryans-thm} holds for $\Aut(X_\rsub)$, and whether the shift can admit finite roots.
\begin{question}
Under what conditions is $\Aut(X_\rsub)$ finitely generated.
\end{question}
\begin{question}
Is the center $Z(\Aut(X_\rsub))$ always generated by the shift $\sigma$?
\end{question}
\begin{question}
In $\Aut(X_\rsub)$, when does the shift map $\sigma$ have a finite root? That is, when does there exist $f \in \Aut(X_\rsub)$ and $n \geq 2$ such that $f^n = \sigma$?
\end{question}
A rather artificial example of an automorphism whose square is the shift is given by considering the non-primitive random substitution
\[
\rsub \colon a \mapsto \{\overline{ab},\overline{ba}\}, \overline{a} \mapsto \{ab,ba\}, b \mapsto \{\overline{aa}\}, \overline{b} \mapsto \{aa\}.
\]
Every element of the RS-subshift either has all barred letters or no barred letters, so we can define an automorphism $f$ that removes bars on barred letters and for non-barred letters, adds bars and shifts the sequence. Then $f^2 = \sigma$.

\section*{Acknowledgement}
The authors are grateful to an anonymous referee for pointing out an error in a previous version of this work.
The authors would like to thank Raf Bocklandt, Philipp Gohlke, Neil Ma\~{n}ibo, Eden Miro, Samuel Petite and Scott Schmieding for helpful discussions.
We are especially thankful to Scott Schmieding for alerting us to other examples of subshifts with automorphism groups containing an infinite simple subgroup.
Dan Rust would like to acknowledge the support of the DFG via SFB1283/1 and the support of the Dutch Science Federation (NWO) through visitor grant 040.11.700.

\bibliographystyle{jis}
\bibliography{tilings}

\end{document}